\newtheorem{thm}{Theorem}
\newtheorem{cor}{Corollary}
\newtheorem{lem}{Lemma}
\newtheorem{prop}{Proposition}
\theoremstyle{definition}
\theoremstyle{remark}
\newtheorem*{rmk}{Remark}
\newcommand{\R}{\mathbb R}
\newcommand{\N}{\mathbb N}
\newcommand{\T}{\mathbb T}
\newcommand{\rot}{\operatorname{curl}}
\newcommand{\ka}[1]{\mathcal{C}^{#1}}
\newcommand{\Lip}[1]{\mathcal{\text{Lip}}{\, #1}}
\newcommand{\Div}{\operatorname{div}}
\newcommand{\dmt}{diam}
\title{Analyticity of the Flow for the Aggregation Equation.}
\author{J. M. Burgués \and J. Mateu}
\begin{document}
	
	\maketitle
	
	\begin{abstract} Let $\Omega$ be a bounded domain in $\R^n$ whose boundary is $\ka{1,\,\gamma}$ for $\gamma\in(0,\,1)$. Consider the aggregation equation in the case of the initial condition being a positive multiple of the characteristic function of $\Omega$. In this paper we prove global in time analyticity of the flow generated by the velocity field which propagates the density solution of this equation. 
	\end{abstract}
	
	\section{Introduction}
	
	The aggregation equation is a classical partial differential equation that can be formulated in $\R^n$ for any $n\in\N\setminus\{0,\, 1\}$. Namely 	
	\begin{equation}\label{eq(A)}
		\begin{cases}
			\partial_t\rho(x,\, t)+\Div(\rho\, v)(x,\, t)=0, \ \ \ (x,t)\in \R^n\times(0,\, T^*)\\
			v(x,\, t)=K*\rho(\ ,\, t)(x)\\
			\rho(x,\, 0)=\rho_0(x),
		\end{cases}
		\tag{$A$}
	\end{equation} where $v$ represents a velocity field and $\rho$ the density of mass of an irrotational inviscid and compressible fluid. The vector-valued kernel $K$ is
	$$
	K(x)=\frac{x}{|x|^n}.
	$$
	
	It is is a continuity equation that in the cases of $n=2,\, 3$ can be related with biological systems and also with many other phenomena, as pointed in \cite{BeLaLe}.
	
	The velocity field $v$ provides a flow through the equation \begin{equation}\label{fla} 
		\begin{cases}
			\frac{\partial\psi}{\partial t}(x,t)=v(\psi(z,\, t),\, t)\\*[5pt]
			\psi(x,0)=x.
		\end{cases}
	\end{equation}
	
	The main result of this paper is
	
	\begin{thm}\label{Main} Let $\Omega\subset\R^n$ be a bounded domain 
		with $\partial\Omega\in\ka{1,\, \gamma}$ for $\gamma\in(0,\, 1)$. Assume $\psi(x,\, t)$ be the flow corresponding to the solution $(v,\, \rho)$ of the equation \eqref{eq(A)} when the initial condition $\rho_0(x)$ is  a constant multiple of the characteristic function of $\Omega$. Then, for each $x\in\R^n$, the function 
		$\psi(x,\, t)$
		is real analytic in the time variable $t$ in the interval of existence.
	\end{thm}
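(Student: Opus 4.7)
The plan is to recast the flow as a closed autonomous ODE in Lagrangian coordinates and then to establish Cauchy-type bounds
\[
\bigl\|\partial_t^{k}\psi(\cdot,t)\bigr\|_{\M{X}}\le A\,k!\,M^{k},\qquad k\ge 0,
\]
in a suitable Banach space $\M{X}$ of maps whose image domain still has a $\ka{1,\,\gamma}$ boundary; such a bound, uniform for $t$ in a compact subinterval of the interval of existence, forces $t\mapsto\psi(x,t)$ to be real analytic for every fixed $x$, with radius of convergence at least $1/M$.

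First I would use the structural fact --- to be extracted from the earlier analysis of the paper, or proved by uniqueness together with a direct computation --- that under the initial datum $\rho_0=c_0\chi_\Omega$ the density keeps the form $\rho(\cdot,t)=c_0\,\chi_{\Omega_t}$, with $\Omega_t:=\psi(\Omega,t)$ and $\partial\Omega_t\in\ka{1,\,\gamma}$ on the whole interval of existence. Performing the change of variables $y=\psi(z,t)$ in $v=K*\rho$ recasts \eqref{fla} as the closed equation
\[
\frac{\partial\psi}{\partial t}(x,t)=c_0\int_{\Omega}K\bigl(\psi(x,t)-\psi(z,t)\bigr)\det D\psi(z,t)\,dz=:F[\psi](x,t),
\]
whose right-hand side depends on $\psi$ in an autonomous and formally analytic way. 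The question becomes one of purely quantitative control of derivatives.

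The core of the argument is an induction on $k$: differentiating $k$ times in $t$ by Fa\`a di Bruno expresses $\partial_t^k\psi$ as a sum, indexed by partitions of $k$, of multilinear expressions in the lower-order derivatives $\partial_t^{j}\psi$, $j<k$, integrated against kernels $D^{\alpha}K(\psi(x,t)-\psi(z,t))$ multiplied by polynomial combinations of entries of $D\psi(z,t)$. Granted the inductive Cauchy bounds for $j<k$, the combinatorial factor produced by the partitions is exactly what is needed to close a Cauchy bound at step $k$, \emph{provided} each multilinear piece is estimated on $\M{X}$ by a constant times $M^k$, uniformly over partitions.

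The main obstacle is the increasing singularity of the kernels: $D^{\alpha}K$ is homogeneous of degree $1-n-|\alpha|$, so the kernels become more and more singular as $k$ grows. I would handle this by integration by parts, transferring a block of derivatives from $K$ onto the characteristic function $\chi_{\Omega_t}$ (producing boundary integrals over $\partial\Omega_t$ controlled by the preserved $\ka{1,\,\gamma}$ regularity) and rewriting the remainder as a Calder\'on--Zygmund-type operator acting on $\chi_{\Omega_t}$ or on surface measure over $\partial\Omega_t$. Boundedness of such operators on $\M{X}$ is standard given the preserved boundary regularity, and should correspond to the $k=1$ estimates already developed in the paper. The delicate technical heart of the argument is therefore the uniform bookkeeping needed to show that the $k$-th derivative costs only one extra factor of $M$, rather than a super-geometric amount; it is in this step that the cancellation properties of $K$ and the preservation of $\ka{1,\,\gamma}$ boundary regularity must be pushed to their quantitative limit.
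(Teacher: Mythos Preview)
Your outline captures the right overall strategy---prove Cauchy-type estimates on the Taylor coefficients in time, then deduce analyticity---and you correctly identify the central difficulty: direct differentiation of the Lagrangian integral produces kernels $D^{\alpha}K$ of arbitrarily high singularity. But your proposed fix, ``integration by parts transferring derivatives onto $\chi_{\Omega_t}$'', is precisely the step where the argument is missing a real idea. After the change of variables the integral is over the fixed domain $\Omega$ with $\psi(z,t)$ inside, so integration by parts in $z$ drags in inverse Jacobians and puts spatial derivatives onto the lower-order factors $\partial_t^{j}\psi(z,t)$; these have gradients that are only $C^{\gamma}$ on each side of $\partial\Omega$ and jump across it, so you cannot iterate this $|\alpha|$ times while keeping a uniform constant. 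You acknowledge that this is ``the delicate technical heart'' but do not actually supply it, and the combinatorial claim that ``the partitions give exactly what is needed'' is likewise asserted rather than proved.

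The paper avoids this obstacle by a structural reformulation rather than by taming the hypersingular kernels. It does not differentiate the Lagrangian integral at all: instead it shows that the flow satisfies the pair of first-order form equations \eqref{div1}--\eqref{rot1} and works with the spatial differentials $d\xi_j^{(s)}$ of the Taylor coefficients. Expanding \eqref{div1}--\eqref{rot1} in powers of $t$ yields for each $s$ a Hodge-type system $d\mu^{(s+1)}=\Upsilon^{(s+1)}$, $\delta\mu^{(s+1)}=\Xi^{(s+1)}$, whose right-hand sides are polynomial in the lower-order $d\xi^{(j)}$. Inverting via the Newton potential expresses $\partial_{x_i}\xi_j^{(s+1)}$ as a \emph{fixed} family of Riesz transforms $R_{l,i}$ applied to those polynomials---no hypersingular kernel ever appears, regardless of $s$. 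The induction then closes using a single linear estimate, the boundedness of $R_{l,i}$ on a space built from $C^{\gamma}(\bar\Omega)\oplus C^{\gamma}(\overline{\R^n\setminus\Omega})$ with decay at infinity (Theorem~\ref{thmR}), together with a generating-function argument for the nonlinear recursion. This reduction to iterating one fixed Calder\'on--Zygmund operator, rather than an unbounded family of increasingly singular ones, is the key idea your proposal lacks.

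Two smaller points. First, for the aggregation equation \eqref{eq(A)} the density is not $c_0\chi_{\Omega_t}$ but $\dfrac{c_0}{1-c_0 t}\chi_{\Omega_t}$; the Jacobian $\det D\psi(\cdot,t)=1-c_0 t$ on $\Omega$ cancels this, so the autonomous equation is simply $\partial_t\psi(x,t)=c_0\int_{\Omega}K\bigl(\psi(x,t)-\psi(z,t)\bigr)\,dz$ without the determinant factor you wrote. Second, the local analyticity one obtains this way must still be propagated to the whole interval $[0,T^*)$; the paper does this by invoking persistence of the $\ka{1,\gamma}$ boundary regularity and iterating.
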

	\medskip

	An important observation is that after a non-linear rescaling of the time variable and the values of the function $\rho$ in the theorem, equation  \eqref{eq(A)} becomes a transport equation (see \cite{BeLaLe}):
	\begin{equation}\label{eqtildeA}
		\begin{cases}
			\partial_t\rho(x,\, t)+v(x,\, t)\cdot\nabla\rho(x,\, t)=0, \ \ \ (x,t)\in \R^n\times(0,\, \infty)\\
			v(x,\, t)=K*\rho(\ ,\, t)(x)\\
			\rho(x,\, 0)=\rho_0(x),	
		\end{cases}
		\tag{$\tilde A$}
	\end{equation}
	which is dual to the equation \eqref{eq(A)} in the weak sense.

	\subsection{Historical} 
	The equations (\ref{eq(A)}) and (\ref{eqtildeA}) are samples of the so-called active scalar equations, the class of partial differential equations where the evolution in time of a scalar quantity is governed by the motion of a fluid whose velocity itself varies with this scalar quantity. These equations include Euler equations for the dynamics of the vorticity of a perfect inviscid fluid and have been largely studied in several aspects. The most important of these are the existence and uniqueness of classical and weak solutions for different classes of initial data, their regularity in the space and time variables and the same topics for the associated flow (See \cite{Che}, \cite{BurMat}, \cite{BeLaLe}, \cite{BeGaLaVe}).
	
	Concerning the analyticity of the flow, in \cite{Her} a conceptual argument is developed for proving it in the case of some equations or systems related to very general fluid models. The method relies on ideas already introduced by Serfaty \cite{Ser} and consists in complexifying the time variable and considering an operator describing the time derivative of the flow in terms of the initial condition and the flow itself. Then the local boundedness and preservation of the analyticity of this operator in appropriated Banach spaces implies the analyticity of the flow. The method covers among other situations the vortex patch problem for the Euler equation in $\R^2$ in the case of $\ka{2}$ boundary but, as far as we know, it does not cover an identical problem for the aggregation equation. 
	
	In \cite{Shn} the analyticity of the flow for the Euler equation is proven in the case of the space domain being the torus $\T^3$ and the initial datum in a Sobolev space contained in $\ka{1}$. The key point consists in considering the flow as a geodesic, image of the exponential map, in the group of volume preserving diffeomorphisms of the space.
	
	In \cite{FrZh} the authors prove the analyticity of the flow in a sufficiently small interval of time in the case of $3D$ Euler equations for the vorticity in a torus $\T^3$ and for the initial vorticity in a Hölder class. The (a priori) method consists in the study of the size of the Taylor coefficients of the time-development of an hypothetical analytic solution. It is elementary enough to make possible the use of a relatively simple technology allowing a great generality. It has inspired our procedure in this paper. 
	
	Finally in the paper \cite{BurMat} the global analyticity in time of the trajectories of particles in the patch problem has been proven  for aggregation equation in dimension 2. The method covers automatically the patch problem for (\ref{eqtildeA}) in dimension 2, via a change of variables from \cite{BeLaLe}. Also the case of the patch problem for Euler equation in dimension $2$ is proven by this method, observing that in this dimension it is formally related to the aggregation.

	\subsection{Idea of the proof}
	
	Let us briefly sketch the procedure of the proof. It follows the lines of the one provided in \cite{BurMat} for the analogous theorem in dimension 2 . However, the general case needs developing and some extra ideas.
	
	It is worth highlighting the existence and uniqueness of a solution to the equation in the general dimension of the problem for the case of patches having bounded support. This was already proved in \cite{BeLaLe}. In this paper, the 
	key fact that the solution is a patch and also has bounded support at any instant of its evolution is also proved.
	
	As it is well known (see \cite{Che}), the velocity field associated with the solution is regular enough to have associated a flow $\psi$.
	We begin by proving that this flow is related to the solution $\rho(x, t)$ of the 
	equation (\ref{eq(A)}) by a new system of equations that turns out to be rather complicated. 
	To bypass this complexity we show that this system of equations can
	be reformulated as an operator between spaces of differential
	forms
	\begin{equation}\label{pflow}\begin{cases}
			
			P [\psi](x, t) = \rho(x,\, t)\,  V(x)\\
			
			\psi(x, 0) = x,
		\end{cases}
	\end{equation}
	where $P$ is a non-linear second order differential operator (see \cite{Sch}) from a space of maps to a space of forms, $V (x) = dx_1\wedge\dots\wedge
	dx_n$ is the volume form of the space $\R^n$ and $\rho(x,\, t) =\frac{\rho(x,\, 0)}{1-\rho(x,\, 0)\, t}$ . 
	This reformulation reduces considerably the complexity of the problem.
	
	The next step consists of using a standard "a priori" method to study equation (\ref{pflow}) in the case
	of the datum $\rho(x,\, 0)$ being a patch.
	In this case we prove that (\ref{pflow}) has an analytic solution at some interval too, not necessarily equal to the previous one. It is important to underscore here
	that in general the solution of (\ref{pflow}) is not unique. Nevertheless,  
	we obtain as a solution a time-parametrized family of homeomorphisms of $\R^n$, which makes it
	a flow. The proof involves on the one hand an accurate use of the machinery of differential forms and on the other hand it requires very technical estimates of Harmonic Analysis, some of which are new while others are adapted from well known concepts.
	
	The flow obtained as a
	solution of equation (\ref{pflow}) provides a velocity field $u$ and then a density, $\Xi$. The
	couple $(u, \Xi)$ satisfies the equation (\ref{eqtildeA}), the dual equation of (\ref{eq(A)}), so it is a
	weak solution of (\ref{eq(A)}). The uniqueness of solution of (\ref{eq(A)}) implies that the
	flows obtained from equation (\ref{fla}) and from equation (\ref{pflow}) coincide in the aforementioned interval.
	
	Using the persistence of the boundary regularity proved in \cite{BeGaLaVe}  and the topological properties and regularity results (see \cite{BeLaLe}) of solutions of the equation (\ref{eq(A)})
	for patches, the global analyticity in time of the solution of the aggregation equation is proven.

	\subsection{Plan of the paper} 
	
	The paper is developed as follows. In the first section we introduce the basic notation as well as a brief dictionary of differential forms, as used in the paper. Also a short compendium of the main operators used (mainly Riesz operators) and then a precise formulation of the main theorems.
	
	In section 2, the main known facts concerning the aggregation problem for patches are exposed. Mainly well posedness, boundedness and boundary regularity of the support of the solution for any fixed value of the time variable. Then cornerstone formulas that relate the flow corresponding to the solution with the initial patch are developed.
	
	Immediately we precisely state an auxiliary result (Theorem 2) and use it for proving the local version of Theorem 1. This implies the same result for the dual equation and the persistence of the regularity of the boundary of the support as proven in \cite{BeGaLaVe} allows an argument of connectivity completing the proof.
	
	Section 3 is devoted to the proof of Theorem 2. An a priori method of determining the coefficients of a possible analytic solutions of the flow-density equations is developed. Then we use an iterative method  for solving the resulting functional equations and a corresponding result (Theorem 3) for estimating the solutions leading to a sequence of inequalities.  Using rather combinatoric arguments we obtain local analytic solutions of the flow-density equations. 
	Also a jump formula for the Riesz kernel, as developed in section 4 is used in the case of the space variables being at the boundary of the patch. 
	
	Finally section 4 is the most technical in the paper. There we prove Theorem 3 that provides the estimates on the Riesz transform used in the proof of Theorem 2. Jump formula for this transform is proven in an Appendix.

	\section{Preliminaries}
	In order to have functions defined at all points in $\R^n$, we introduce a notion of density of a distribution. Let $\phi\in\ka{\infty}(\R^n)$ be a (test) radial function supported in the unit ball, whose integral is equal to $1$ and such that $\phi(0)=1$, Let us consider, for a distribution $T$, the limit 
	\begin{equation}\label{test}
		\lim_{\epsilon\to0} \langle T,\, \phi_{x_0,\, \epsilon}\rangle,
	\end{equation} 
	where $\phi_{x_0,\, \epsilon}(x)=\frac{1}{\epsilon^n}\, \phi(\frac{x-x_o}{\epsilon})$. 
	
	If this limit exists at some point $x_0$ and is independent of the choice of $\phi$ we call it {\it density function of $T$ at $x_0$} and denote it by $\Theta(T,x_0)$.
	
	
	\begin{lem}\label{dens}
		Let $\Omega\subset\R^n$ be a bounded domain such that $\partial\Omega\in\ka{1}$. If $T_{\chi_\Omega}$ is the distribution given by $\chi_\Omega$, we have 
		\begin{equation}\label{dns1}
			\Theta(T_{\chi_\Omega},x_0)=\begin{cases} 
				1  & \text{if }  x_0\in\Omega, \\ 
				\frac{1}{2} & \text{if }  x_0\in\partial\Omega, \\	
				0 &\text{if }  x_0\in\bar\Omega^c. 
			\end{cases}
		\end{equation}
	\end{lem}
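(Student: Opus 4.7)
After a change of variables $y=(x-x_{0})/\epsilon$, the pairing becomes
\[
\langle T_{\chi_{\Omega}},\phi_{x_{0},\epsilon}\rangle=\int_{(\Omega-x_{0})/\epsilon}\phi(y)\,dy,
\]
so the task reduces to identifying the limit of the mass of $\phi$ over the rescaled set $\Omega_{\epsilon}:=(\Omega-x_{0})/\epsilon$. The plan is to treat the three positions of $x_{0}$ separately, reducing the last (genuine) case to a symmetry argument for radial functions on a half-space, using $\ka{1}$ regularity of $\partial\Omega$.

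For $x_{0}\in\Omega$: since $\Omega$ is open, there exists $r>0$ with $B(x_{0},r)\subset\Omega$, so for every $\epsilon<r$ one has $B(0,1)\subset\Omega_{\epsilon}$. Because $\spt\phi\subset B(0,1)$, the integral above equals $\int_{\R^{n}}\phi=1$ for all small $\epsilon$. The case $x_{0}\in\bar\Omega^{c}$ is completely analogous with value $0$, since then $B(x_{0},r)\cap\Omega=\emptyset$ for some $r>0$.

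The substantive case is $x_{0}\in\partial\Omega$. After a rigid motion we may assume that the inward normal at $x_{0}$ is the $n$-th coordinate direction, and $\ka{1}$ regularity furnishes a neighborhood $U$ of $x_{0}=0$ and a $\ka{1}$ function $f$ with $f(0)=0$, $\nabla f(0)=0$, such that $\Omega\cap U=\{x\in U:x_{n}>f(x_{1},\dots,x_{n-1})\}$. Set $H=\{y\in\R^{n}:y_{n}>0\}$. I would split
\[
\int_{\Omega_{\epsilon}}\phi - \int_{H}\phi = \int_{\R^{n}}\bigl(\chi_{\Omega_{\epsilon}}-\chi_{H}\bigr)\phi,
\]
which is controlled by $\|\phi\|_{\infty}\,\bigl|(\Omega_{\epsilon}\triangle H)\cap B(0,1)\bigr|$. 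The key estimate is that this Lebesgue measure tends to $0$: by continuity of $\nabla f$ at $0$, for every $\delta>0$ there is $\epsilon_{0}>0$ with $|f(x')|\le\delta|x'|$ whenever $|x'|\le\epsilon_{0}$, so $(\Omega_{\epsilon}\triangle H)\cap B(0,1)\subset\{|y_{n}|\le\delta\}\cap B(0,1)$ for $\epsilon<\epsilon_{0}$, a slab of measure $O(\delta)$. Letting $\delta\to 0$ gives $\lim_{\epsilon\to 0}\int_{\Omega_{\epsilon}}\phi=\int_{H}\phi$. Finally, radiality of $\phi$ makes it invariant under $y\mapsto -y$, hence $\int_{H}\phi=\int_{H^{c}}\phi=\tfrac{1}{2}\int\phi=\tfrac{1}{2}$. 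Since this argument never used the specific $\phi$ beyond its being a radial bump with unit mass, the limit is automatically independent of the choice of $\phi$, confirming that the density exists and equals $\tfrac{1}{2}$.

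The only delicate point is controlling the symmetric difference $\Omega_{\epsilon}\triangle H$ near the boundary; the $\ka{1}$ (not merely Lipschitz) hypothesis is exactly what makes this difference shrink faster than a fixed neighborhood of the tangent hyperplane, which is what the argument needs.
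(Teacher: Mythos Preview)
Your proof is correct. The paper itself does not give a self-contained argument here---it merely states that the proof is ``after natural changes identical to the one in the case $n=2$'' from \cite{BurMat}---so there is no detailed comparison to make; your blow-up argument (rescaling, comparing with the tangent half-space via a symmetric-difference estimate driven by $\nabla f(0)=0$, and using radiality of $\phi$ to get $\int_H\phi=\tfrac12$) is exactly the kind of straightforward, dimension-independent argument one expects, and is presumably what the reference contains. One tiny cosmetic point: when you invoke a rigid motion, it is worth remarking that this is harmless precisely because $\phi$ is radial (hence rotation-invariant), so the rescaled integral is unchanged.
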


	\begin{proof} The proof is after natural changes identical to the one in the case $n=2$ as we developed in \cite{BurMat} and we refer this paper to the interested reader.
	\end{proof}
	
	As a consequence of this fact, for $f\in\ka{\infty}(\R^n)$ we have the identity $$\Theta(T_{f\, \chi_\Omega},\, x)=f(x)\, \Theta(T_{\chi_\Omega},x).$$

	The space $\ka{k,\gamma}(U)$, where $U\subset\R^n$ is an open set, $k$ is a non-negative integer and $0<\gamma\le1$ is the space of functions with continuous derivatives up to the order $k$ such that each derivative of order $k$ extends to a $\gamma$-H\"older function in the closure of $U$.
	
	
	In this paper we will mainly use the spaces $\ka{k,\gamma}(U)$ for $k=0,1$, equipped with the norms 
	$$
	\|f\|_\gamma=\|f\|_\infty+\sup_{x\neq y;\, x,\, y\in U}\frac{|f(x)-f(y)|}{\|x-y\|^\gamma}
	$$ 
	and
	$$
	\|f\|_{1,\, \gamma}=\|f\|_\infty+\|\nabla f\|_\gamma.
	$$

	We use the notation $\ka{\omega}$ for the space of real analytic functions and~$\mathcal{D}(A)$ for compactly supported $\ka{\infty}$ functions whose support is contained in a closed set $A\subset\R^n$.

	\subsection{Differential forms} Differential forms represent a very convenient frame for our approach. Its use makes the cancellations more transparent and simple and the notation less heavy when dealing with problems concerning field theory. The standard vector operators can be written in terms of the exterior differential operator acting on several spaces of differential forms. In this context it is extremely useful and also a cornerstone idea the decomposition of a form as a sum of other three, one in the image of the exterior differential operator, the other in the image of its adjoint and a third one called harmonic annihilated by the exterior differential operator and by its adjoint.
	
	Let us provide a brief dictionary of the subject. For a general introduction and treatment of the subject see section 1.2 in \cite{Sch}. 
	
	By $x_j$ we denote the coordinate functions corresponding to the standard basis of $\R^n$, $e_1,\, \dots,\, e_n$. 
	
	The standard concept of the differential of a function  $$df=\sum_{j=1}^n\frac{\partial f}{\partial x_j}\, dx_j$$ combined with the standard antisymmetric exterior product, $\wedge$, produce the general differential forms:
	
	If $I=(j_j,\, \dots,\, j_k)$ is a $k$-index with $0<j_1<\dots<j_k<n$ then $$dx_I=dx_{j_1}\wedge\dots\wedge dx_{j_k}$$ is a way of producing the generators of $\Lambda^k\ka{l}(U)$, the space of forms $$\omega=\sum_I f_I\, dx_I$$ where $f_I\in\ka{l}(U)$ and $U\subset\R^n$ is an open set. It is worth remarking that $\Lambda^0\ka{l}(U)=\ka{l}(U)$ and $\Lambda^n\ka{l}(U)$ is generated by the volume form $$V=dx_1\wedge\dots\wedge dx_n$$ with coefficients in $\ka{l}(U)$.
	
	There is also a notion of duality among spaces of forms, called the Hodge star operator $$\star:\Lambda^k\ka{l}(U)\rightarrow\Lambda^{n-k}\ka{l}(U)$$ defined by the property $$(\star\omega)\wedge\eta=(\sum_I\omega_I\, \eta_I)\, V,$$ where $\omega=\sum_I\omega_I\, dx_I$ and $\eta=\sum_I\eta_I\, dx_I$. It satisfies the identity $$\star^2\omega=(-1)^{k\, (n-k)}\, \omega.$$

	The exterior differential operator acting on a $k$-form
	$$d:\Lambda^k\ka{l}(U)\rightarrow\Lambda^{k+1}\ka{l}(U)$$ is an extension of 
	the usual differential of a function  as defined above, by the formula $$d(\sum_I f_I\, dx_I)=\sum_I df_I\wedge dx_I.$$ The operator $d$ is an intrinsic object and satisfies  $d^2\stackrel{\text{def}}{=}d\circ d=0$.
	
	It has a formal adjoint relative to the euclidean metric of $\R^n$ acting on a $k$-form,
	$$\delta:\Lambda^k\ka{l}(U)\rightarrow\Lambda^{k-1}\ka{l}(U)$$ by the formula $$\delta\omega=(-1)^{nk+n+1}\, \star d \star\omega.$$
	
	It satisfies also $\delta\circ\delta=0$.
	
	There are two more facts that determine the importance of considering the point of view of differential exterior calculus as they capture faithfully many aspects of the euclidean geometry of $\R^n$. 
	
	On one hand there is the cornerstone of the Hodge theory: For a differential form $\omega\in \Lambda^k\ka{l}(U)$, the knowledge of $d\omega$, $\delta\omega$ and a harmonic form $u\in \Lambda^k\ka{l}(U)$ such that $du=0$ and $\delta u=0$ completely determine $u$. In fact $u$ codifies the "boundary behaviour" of $\omega$. This fact is of some remarkable use in systems of partial differential equations.  
	
	Aso  a partial description for the behaviour of  fields via useful identifications is possible. Namely if $$X=(X_1,\, \dots,\, X_n)$$ is a regular field, it induces two regular forms in a natural way. First $$\omega_X=\sum_{j=1}^n (-1)^{j-1}\, X_j\, \
	dx_1\wedge\dots\wedge dx_{j-1}\wedge dx_{j+1}\wedge\dots\wedge dx_n\stackrel{\text{def}}{=}\sum_{j=1}^n (-1)^{j-1}\, X_j\,  \widehat{dx_j}$$ and then $$d
	\omega_X=(\Div X)\, V.$$ Secondly 
	$$\omega^X=\star \omega_X=\sum_{j=1}^n X_j\, dx_j.$$ In dimension $3$ $$\star d\omega^X=\delta\omega_X=\omega^{\rot X}.$$
	
	Moreover if $f\in\ka{1}(U)$, then $$df=\omega^{\nabla f}.$$
	
	We finish this sketch with two more important operations.
	
	First the contraction of a form with one vector or field. If $e_p\in\R^n$ is a vector in the standard basis and $\omega=dx_{j_1}\wedge\dots\wedge dx_{j_k}$ then $$e_p\lrcorner\omega=\begin{cases}
		(-1)^{l-1}\, dx_{j_1}\wedge\dots\wedge dx_{j_{l-1}}\wedge dx_{j_{l+1}}\wedge\dots\wedge dx_{j_k}\ \  &\text{if}\ \  j_l=p\\
		
		\\
		
		0 \ \  &\text{otherwise}.
		
	\end{cases} $$ Then we extend by linearity the definition to general forms.
	
	The following formula is a particular case of current use in this paper
	$$e_j\lrcorner(e_l\lrcorner(df\wedge dg))=e_j\lrcorner(e_l\lrcorner((\sum_{u=1}^n\frac{\partial f}{\partial x_u}\, dx_u)\wedge (\sum_{v=1}^n\frac{\partial g}{\partial x_v}\, dx_v)))$$ 
	$$=e_j\lrcorner(e_l\lrcorner(\sum_{v=1}^n\sum_{u<v}[\frac{\partial f}{\partial x_u}\, \frac{\partial g}{\partial x_v}-\frac{\partial f}{\partial x_v}\, \frac{\partial g}{\partial x_u}]\, dx_u\wedge dx_v))$$
	$$=\frac{\partial f}{\partial x_l}\, \frac{\partial g}{\partial x_j}-\frac{\partial f}{\partial x_j}\, \frac{\partial g}{\partial x_l}.$$

	\medskip
	
	Finally, a key concept for this paper is the transport of differential forms by differentiable maps, namely if $W\subset\R^m$, open and $A:W\rightarrow U$ then for $\omega\in\Lambda^k\ka{l}(U)$ we define $A^*(\omega)\in\Lambda^{k}\ka{l}(W)$ in the following way: Assume that  $A=(A_1,\, \dots,\, A_n)$ , and $$\omega=\sum_{(j_1,\, \dots,\, j_k)}f_{(j_1,\, \dots,\, j_k)}\, dx_{j_1}\wedge\dots\wedge dx_{j_k}$$ then 
	$$A^*(\omega)=\sum_{(j_1,\, \dots,\, j_k)}f_{(j_1,\, \dots,\, j_k)}\circ A\  \ dA_{j_1}\wedge\dots\wedge dA_{j_k}.$$
	
	The map $A^*$ is linear and multiplicative and corresponds also to the transport of fields.
	
	\medskip 
	
	In general, it is a standard concept and notation the fact that for a form $\omega=\sum_I f_I\, dx_I$ we have an associated measure via the formula $$\int_A|\omega|=\sum_I\int_{A\cap<e_I>} |f|_I\, dm_{|I|},$$ where $dm_{|I|}$ is the volume element of the hyperplane generated by $e_{I}$. 
	
	Finally two observations useful in this paper. The first is that if $I$ is a $k$ multi-index with complementary multi-index $I'$,  then $$\star dx_I=\epsilon_{I,\, I'}^n\, dx_{I'}$$ where $\epsilon_{I,\, I'}^n$ is the sign associated to the number of transpositions necessary to locate $I$ into $I'$ to get the index $(1,\, 2,\, \dots,\, n)$. The second one, is that $f_1,\, \dots,\, f_n$ are functions, then $$df_1\wedge \dots\wedge df_n=\sum_{\sigma\in S_n}(-1)^{\epsilon(\sigma)}\, \frac{\partial f_1}{\partial x_{\sigma(1)}}\, \dots\frac{\partial f_n}{\partial x_{\sigma(n)}}\, V(x)$$ where $S_n$ is the group of permutations of order $n$ and $\epsilon(\sigma)$ is the number of transpositions leading $\sigma$ to the principal order $(1,\, 2,\, \dots,\, n)$.

	
	\subsection{The fundamental operators.}
	
	Other important actors in this play are the classical operators of potential theory.
	
	The {\bf Newtonian potentials} $K_j$, $j=1,\, \dots,\, n$ are the operators defined on the Schwartz space $\mathcal{S}(\R^n)$ by convolution with the kernels
	\begin{equation}\label{newton} K_j(x)=c_n\, \frac{x_j}{\|x\|^n}=\frac{\partial N}{\partial x_j}.
	\end{equation}\label{riesz} Here the constant $c_n$ depends only on the dimension and $N$ is the fundamental solution of the Laplace operator. 
	
	The {\bf Riesz operators} are the Calderon-Zygmund operators acting  on $\mathcal{S}(\R^n)$ by convolution with the kernel functions 
	\begin{equation}\label{riesz} R_{j,\, i}(x)\stackrel{\text{def}}{=}\frac{\partial K_j}{\partial x_i}(x)=c_n\, \frac{\|x\|^2\, \delta_{j,\, i}-n\, x_j\, x_i}{\|x\|^{n+2}}.
	\end{equation}
	For more information on Calderon-Zygmund operators see \cite{Duo}.
	
	\section{The patch problem for the aggregation equation.}
	
	This section is devoted to the proof of Theorem \ref{Main}. 
	
	As it has been shown in \cite{BeLaLe} we have that for a general $\rho_0\in L^1\cap L^\infty$ there exists a unique function $\rho:(\R^n\setminus\partial\Omega)\times [0,\, T)\rightarrow\R$ being a weak solution of (\ref{eq(A)}), $$\rho\in\ka{}([0,\, T^*),\, L^1(\R^n))\cap L^\infty(\R^n\times(0,\, T^*)),$$ for  some value $T^*\in(0,\, +\infty)$ related to $\rho_0$.

	In order to have a function defined at all points of $\R^n$ we will consider as an initial datum the function associated to the constant function $c$ by Lemma \ref{dens}\newline that is defined at every point in $\R^n$. Namely $$\rho_0(x)=c\, \biggl\{\chi_\Omega(x)+\frac{1}{2}\, \chi_{\partial\Omega}(x)\biggr\}$$
	
	The velocity field $$v(x,\, t)=[-\nabla\, N*\rho(\ ,\, t)](x)$$ is clearly  log-lipschitz in $\R^n$, for every $t\in[0,\, T^*)$ and then, as remarked in \cite{Che}, there exists a unique mapping $\psi\in\ka{}(\R^n\times[0,\, T^*);\R^n)$ satisfying equation (\ref{fla}) and there is a constant $C>0$ such that for any $t\in[0,\, T^*)$, 
	$$
	\psi(\ ,t)-I\in\ka{e^{-Ct\|\rho_0\|_{(L^p\cap L^\infty)(\R^n)}}}(\R^n).
	$$

	From the particular shape of $\rho_0$, we have that if $U\subset\Omega$ or $U$ is a bounded subset of $\R^n\setminus\bar\Omega$, then $v_0\in\ka{\infty}(U)$  and then, using for instance Proposition~8.3 in \cite{MaBe},  we can conclude that for any $t\in[0,\, T^*)$, 
	$
	\rho(\ ,t)\in\ka{\infty}(\psi(U,\, t))
	$
	and 
	$
	v(\ ,t)\in\ka{\infty}(\psi(U,\, t)).
	$ 
	
	Moreover, both the velocity and the {flow} given by the function $\psi$ in the theorem are globally defined with respect to the time variable and in general are regular beyond continuity in the $x$ variable .
	
	Another important fact related to $v$ is that since $v(\ ,\, t)$ is a convolution of a bounded measurable function with compact support with the kernel $-\nabla\, N$, then $v(\ ,\, t)$ is in $L^p(\R^n)$ for some $p$ and also 
	\begin{equation}\label{noharm}
		\lim_{\|x\|\to\infty} \|v(x,\, t)\|=0
	\end{equation} for every $t\in[0,\, T^*)$.

	The flow $\psi$ inherit the local space regularity (after derivation under the integral sign) too. Moreover, using 
	Theorem~1.3.1 of Chapter~1 in \cite{Hor}, we have that $\psi\in\ka{1}(U\times(0,\, T^*))$ and $\frac{\partial\psi}{\partial t}(\ ,t)\in\ka{1}(U)$.

	\subsection{The flow in terms of the density.} In order to avoid complexity and due to the transparency of concepts it provides, we reformulate the main objects and equations in terms of differential forms.
	
	Let us start defining 
	\begin{equation}\label{hyper}
		v(x,\, t)=\sum_{j=1}^n (-1)^{j-1}\, v_j(x,\, t)\, \widehat{dx_j}.
	\end{equation} 
	
	Then we have $$\star v(x,\, t)=\sum_{j=1}^n v_j(x,\, t)\, dx_j$$  
	satisfying  \begin{equation}\label{rot} d_x\star v=0
	\end{equation} and \begin{equation}\label{div} d_xv=\Div_x v\, V=-\rho\, V
	\end{equation}

	A first simple reason for this formulation is  
	
	\begin{prop}
		Given $\rho$ satisfying \eqref{noharm}, \eqref{rot} and \eqref{div}, then $v$ is completely determined in a unique way. 
	\end{prop}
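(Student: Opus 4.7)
The plan is to show that the three conditions together with decay at infinity force the difference of any two solutions to vanish, using the standard Hodge-theoretic uniqueness principle on $\R^n$.

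First I would unwind the form-language into classical vector calculus. The $(n-1)$-form $v$ in \eqref{hyper} is nothing but the hyperflux form of the vector field $(v_1,\dots,v_n)$, and the computation already given in the excerpt shows $dv=(\Div v)\,V$, so \eqref{div} is equivalent to $\Div v=-\rho$. On the other hand, $\star v=\sum v_j\,dx_j$ is the 1-form associated to the vector field, and
\[
d\star v=\sum_{i<j}\Bigl(\tfrac{\partial v_j}{\partial x_i}-\tfrac{\partial v_i}{\partial x_j}\Bigr)dx_i\wedge dx_j,
\]
so \eqref{rot} is exactly the $n$-dimensional curl-free condition $\partial_i v_j=\partial_j v_i$ for every pair $(i,j)$.

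Next, suppose $v^{(1)}$ and $v^{(2)}$ both satisfy \eqref{noharm}, \eqref{rot} and \eqref{div} with the same $\rho$. The difference $w=v^{(1)}-v^{(2)}$ is a vector field which is curl-free, divergence-free, and tends to zero at infinity. Since $\R^n$ is simply connected, the Poincaré lemma applied to the closed 1-form $\star w$ gives a potential $\phi\in\ka{1}(\R^n)$ with $w=\nabla\phi$; the divergence-free condition then becomes $\Delta\phi=0$. Hence each partial derivative $\partial_i\phi$ is a harmonic function on $\R^n$, and by \eqref{noharm} applied to $w$ it vanishes at infinity. Liouville's theorem for harmonic functions on $\R^n$ then forces $\partial_i\phi\equiv 0$ for every $i$, so $w\equiv 0$ and $v^{(1)}=v^{(2)}$.

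There is essentially no obstacle here: the statement is the classical Helmholtz/Hodge uniqueness result dressed in the language of differential forms. The only place one has to be slightly careful is making sure that the regularity of $w$ is good enough to justify the Poincaré lemma and the componentwise Liouville argument; but because $v$ arises as a convolution of an $L^1\cap L^\infty$ density with the Newtonian kernel, $w$ is at least continuous and in an $L^p$-space together with the required decay, which is more than sufficient. One could alternatively give an existence-plus-uniqueness proof by writing down the explicit solution $v=-\nabla N*\rho$, but for the stated proposition the short uniqueness argument above suffices.
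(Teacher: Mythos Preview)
Your argument is correct and follows essentially the same route as the paper: both proofs use the Poincar\'e lemma on the closed $1$-form $\star v$ (or $\star w$) to produce a scalar potential, observe that the divergence condition forces the potential to be harmonic up to the Newtonian term, and then invoke decay at infinity (Liouville) to kill the harmonic remainder. The paper packages this slightly differently by first exhibiting the explicit solution $-\nabla N*\rho$ and then noting that any other solution differs from it by the gradient of a harmonic function that vanishes at infinity, whereas you work directly with the difference $w$ of two hypothetical solutions; but this is only a cosmetic distinction.
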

	\begin{proof} The field $\nabla_x N*\rho$ implemented in (\ref{hyper}) provides a $(n-1)$-form satisfying the three conditions above. Moreover, any solution of \eqref{rot} has the form $\star v=d_x((N*\rho)+f)$, where $f\in\Lambda^1$ and $df=0$ therefore $f$ is a constant function. The condition \eqref{noharm} implies that $f\equiv0$.
	\end{proof}
	
	As a consequence also $\psi$ is completely determined by $\rho$ from \eqref{noharm}, \eqref{rot} and \eqref{div}.
	
	\medskip

	The main equation (\ref{eq(A)}) can be rephrased in terms of differential forms as $$\frac{\partial\rho}{\partial t}\, V+d(\rho\, v)=0,$$ and we follow the argument in \cite{BeLaLe}. The formula above is equivalent to $$\psi^*(\frac{\partial\rho}{\partial t}\, V+d\rho\wedge v)+\psi^*(\rho\, dv)=0$$ in the regions where $\psi(\ ,\, t)$ is an homeomorphism, and since $$\frac{\partial\rho}{\partial t}\, V+d\rho\wedge v=(\frac{\partial\rho}{\partial t}+\sum_{j=1}^n v_j\, \frac{\partial\rho}{\partial x_j})\, V,$$ we have $$\psi^*(\frac{\partial\rho}{\partial t}\, V+d\rho\wedge v)=(\frac{\partial\rho}{\partial t}\circ\psi+\sum_{j=1}^n v_j\circ\psi\, \frac{\partial\rho}{\partial x_j}\circ\psi)\, \psi^*(V)$$
	$$=(\frac{\partial\rho}{\partial t}\circ\psi+\sum_{j=1}^n \frac{\partial\psi_j}{\partial t}\, \frac{\partial\rho}{\partial x_j}\circ\psi)\, \psi^*(V)
	=\frac{\partial(\rho\circ\psi)}{\partial t}\, \psi^*(V).$$
	
	Since $dv=-\rho\, V$ we finally have $$\frac{\partial(\rho\circ\psi)}{\partial t}\, \psi^*(V)=\psi^*(\rho^2\, V)=\rho^2\circ\psi\ \psi^*(V),$$ and then $$\rho(\psi(x,\, t),\, t)=\frac{\rho_0(x)}{1-t\, \rho_0(x)}$$ as $\psi^*(V)$ never vanishes.

	Now, the equation relating $\psi$ with $\rho_0$. 
	
	\begin{prop}\label{main} If the choice of $\rho_0$ implies that $x\rightarrow \psi(x,\, t)$ is a diffeomorphism for any fixed $t$, then we have that 
		\begin{equation}\label{div1} 
			\sum_{j=1}^n (-1)^{j-1}\, d(\frac{\partial\psi_j}{\partial t}(\ ,\, t))(x)\wedge\widehat{d\psi_j(x,\, t)}=-\rho_0(x)\, V(x)
		\end{equation} and
		\begin{equation}\label{rot1}
			\sum_{j=1}^n d(\frac{\partial\psi_j}{\partial t}(\ ,\, t))(x)\wedge d\psi_j(\ ,\, t)(x)=0
		\end{equation}
		
	\end{prop}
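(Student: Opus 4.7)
The approach is to pull back the two defining identities for $v$, namely $d_x v = -\rho\, V$ from \eqref{div} and $d_x\star v = 0$ from \eqref{rot}, along the flow $\psi(\ ,\,t)$ and exploit the naturality of the exterior differential with respect to pullback. Using the flow equation \eqref{fla} rewritten as $v_j\circ\psi = \frac{\partial \psi_j}{\partial t}$, the pullbacks read
\begin{equation*}
\psi^*(v)=\sum_{j=1}^n (-1)^{j-1}\,\frac{\partial \psi_j}{\partial t}\,\widehat{d\psi_j},\qquad \psi^*(\star v)=\sum_{j=1}^n \frac{\partial \psi_j}{\partial t}\,d\psi_j.
\end{equation*}
Since $\psi(\ ,\,t)$ is a diffeomorphism, the identity $d\circ\psi^*=\psi^*\circ d$ is available, and since $d(d\psi_k)=0$ for every $k$, every term in the Leibniz expansion of $d(\widehat{d\psi_j})$ vanishes. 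Consequently, $d(\psi^*(v))$ and $d(\psi^*(\star v))$ reproduce exactly the left-hand sides of \eqref{div1} and \eqref{rot1} respectively.

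Identity \eqref{rot1} is then immediate, since $d(\psi^*(\star v))=\psi^*(d\star v)=0$.

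For \eqref{div1} the right-hand side unfolds as $d(\psi^*(v))=\psi^*(dv)=-\psi^*(\rho\, V)=-(\rho\circ\psi)\det(D_x\psi)\,V$, and the task is reduced to the conservation identity $(\rho\circ\psi)\det(D_x\psi)=\rho_0$. This is obtained by combining the formula $\rho\circ\psi=\rho_0/(1-t\rho_0)$ established immediately before the statement of the proposition with Jacobi's formula applied to the flow,
\begin{equation*}
\frac{d}{dt}\det(D_x\psi)=(\Div v)\circ\psi\cdot\det(D_x\psi)=-(\rho\circ\psi)\det(D_x\psi),
\end{equation*}
which integrates under the initial condition $\det(D_x\psi)|_{t=0}=1$ to give $\det(D_x\psi)=1-t\rho_0$. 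Multiplying the two expressions yields $\rho_0$, and \eqref{div1} follows.

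I do not expect any serious obstacle in this scheme; the only point requiring care is securing the regularity needed for the commutation $d\circ\psi^*=\psi^*\circ d$ and for Jacobi's formula, both of which are covered by the $\ka{1}$ spatial regularity of $\psi$ collected at the beginning of this section together with the diffeomorphism hypothesis of the proposition.
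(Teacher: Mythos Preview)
Your proposal is correct and follows essentially the same route as the paper: pull back $v$ and $\star v$ by $\psi$, use $d\circ\psi^*=\psi^*\circ d$ together with $d(\widehat{d\psi_j})=0$ to identify the left-hand sides, and then reduce \eqref{div1} to the conservation law $(\rho\circ\psi)\det(D_x\psi)=\rho_0$ via the Jacobi ODE for $\det(D_x\psi)$ combined with $\rho\circ\psi=\rho_0/(1-t\rho_0)$. The only cosmetic difference is that the paper \emph{derives} Jacobi's formula in situ by recognizing $\sum_j(-1)^{j-1} d(\partial_t\psi_j)\wedge\widehat{d\psi_j}=\partial_t(\det D_x\psi)\,V$, whereas you invoke it directly.
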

	
	\begin{rmk}
		The role of the flow $\psi$ can be considered as the action of a family of diffeomorphisms on the forms describing the velocity and the density. This allows us to formulate equations directly relating the flow and the density.
	\end{rmk}	
	\begin{proof} 
		
		We have $$\psi(\ ,\, t)^*(v(\ ,\, t))=\sum_{j=1}^n (-1)^{j-1}\, v_j(\psi(\ ,\, t),\, t)\, \widehat{d\psi_j(\ ,\, t)}.$$ 
		
		Then $$d_x\psi(\ ,\, t)^*(v(\ ,\, t))=\psi(\ ,\, t)^*(dv(\ ,\, t))=\psi(\ ,\, t)^*(-\rho(\ ,\, t) \, V)$$
		$$=-\rho(\psi(\ ,\, t),\, t) \, \psi(\ ,\, t)^*(V)(x)=-\rho(\psi(\ ,\, t),\, t) \, \det J_x\psi(\ ,\, t)\, V(x),$$ and also $$d_x\left(\sum_{j=1}^n (-1)^{j-1}\, v_j(\psi(\ ,\, t),\, t)\, \widehat{d\psi_j(\ ,\, t)}\right)$$
		$$=\sum_{j=1}^n (-1)^{j-1}\,  \left(d_x\frac{\partial\psi_j}{\partial t}(x,\, t)\wedge\widehat{d\psi_j(x,\, t)}+\frac{\partial\psi_j}{\partial t}(x,\, t)\, d_x(\widehat{d\psi_j(x,\, t)})\right)$$
		$$=\sum_{j=1}^n (-1)^{j-1}\,  d_x\frac{\partial\psi_j}{\partial t}(x,\, t)\wedge\widehat{d\psi_j(x,\, t)}=(*),$$ because $$d_x(\widehat{d\psi_j(x,\, t)})=d_x\, \psi(\ ,\, t)^*(\widehat{dx_j})=\psi(\ ,\, t)^*(d_x \widehat{dx_j})=0.$$ 
		
		Then $$(*)=\sum_{j=1}^n d\psi_1(x,\, t)\wedge\dots\wedge d\psi_{j-1}(x,\, t)\wedge  d_x\frac{\partial\psi_j}{\partial t}(x,\, t)\wedge d\psi_{j+1}(x,\, t)\wedge\dots\wedge d\psi_n(x,\, t)$$
		$$=\frac{\partial\det J_x\psi}{\partial t}(x,\, t)\, V(x),$$ implying that  $$\frac{\partial\det J_x\psi}{\partial t}(x,\, t)=-\rho(\psi(\ ,\, t),\, t) \, \det J_x\psi(\ ,\, t)$$ and $$\det J_x\psi(x ,\, t)=\det J_x\psi(x,\, 0)\, e^{-\int_0^t \rho(\psi(x ,\, \tau),\, \tau)\, d\tau },$$ therefore	
		\begin{equation}\label{gral1} 
			d_x\psi(\ ,\, t)^*(v(\ ,\, t))=-\rho(\psi(\ ,\, t),\, t) \, \det J_x\psi(x,\, 0)\, e^{-\int_0^t \rho(\psi(x ,\, \tau),\, \tau)\, d\tau }\, V(x)
		\end{equation}
		
		\medskip
		and as we remarked above    $$\rho(\psi(\ ,\, t),\, t)=\frac{\rho_0(x)}{1-t\, \rho_0(x)}.$$ Consequently $$\int_0^t \rho(\psi(x ,\, \tau),\, \tau)\, d\tau=\int_0^t \frac{\rho_0(x)}{1-\tau\, \rho_0(x)}\, d\tau$$
		$$=-\ln(1-\tau\, \rho_0(x))]_0^t=-\ln(1-t\, \rho_0(x)),$$ and as $\psi(x,\, 0)=x$, we have (\ref{div1}).

		We also have $$d\psi(\ ,\, t)^*(\star v(\ ,\, t))=d\left(\sum_{j=1}^n v_j(\psi(\ ,\, t),\, t)\, d\psi_j(\ ,\, t)\right)$$
		$$=d\left(\sum_{j=1}^n \frac{\partial\psi_j}{\partial t}(\ ,\, t),\, t)\, d\psi_j(\ ,\, t)\right).$$ This implies (\ref{rot1}).
	\end{proof}

	\subsection{Analyticity of the flow: Proof of Theorem \ref{Main}. } The goal of this section is the proof of the analyticity in time of the flow $\psi$ corresponding to the weak   solution of (\ref{eq(A)}), for $t\in[0,\, T^*]$. 
	
	As we have seen in the previous sub-section the aforementioned flow $\psi$ satisfies in the case of $\rho_0$ being a patch the system 
	\begin{equation}\label{eqP}
		\begin{cases}
			\sum_{j=1}^n (-1)^{j-1}\, d(\frac{\partial\psi_j}{\partial t}(\ ,\, t))(x)\wedge\widehat{d\psi_j(x,\, t)}=-\rho_0(x)\, V(x)\\
			
			\\
			
			\sum_{j=1}^n d(\frac{\partial\psi_j}{\partial t}(\ ,\, t))(x)\wedge d\psi_j(\ ,\, t)(x)=0\\
			
			\\
			
			\psi(x,0)=x
		\end{cases}
	\end{equation} 
	
	Also this system has a solution $\phi$ that is time-analytic in an interval $[0,\, T_1]$ as it is stated in the next result that will be proven in section 4.

	\begin{thm}\label{T1} For any given function $\rho_0$ 
		the system (\ref{eqP})
		has a solution $\phi(x,\, t)$ analytic in $t$ in a neighbourhood of $0$ whose radius depend on $\rho_0$. Moreover for every $t$ in $[0,\, T_0]$, for some $T_0\le\min\{T_1,\, T^*\}$
		\begin{enumerate}
			\item[1)] $\phi(\ ,\, t):\R^n\rightarrow\R^n$ is an homeomorphism.
			
			
			\item[2)] $\phi(\ ,\, t)\in\ka{\infty}(\R^n\setminus\partial\Omega).$
			
		\end{enumerate}
		
	\end{thm}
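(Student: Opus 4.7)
The plan is to mimic the a priori method of \cite{FrZh}, as adapted for $n=2$ in \cite{BurMat}. I postulate a formal series
\[
\phi(x,t)=x+\sum_{k\ge 1}\frac{\phi_k(x)}{k!}\, t^k,
\]
plug it into the system \eqref{eqP}, and identify coefficients of $t^{k-1}$. At $t=0$ one has $d\psi_j=dx_j$, so at first order the two equations of \eqref{eqP} collapse to $\Div\phi_1=-\rho_0$ together with $d(\sum_j\phi_{1,j}\,dx_j)=0$; combined with decay at infinity (which the construction preserves), Hodge theory gives $\phi_1=-\nabla N*\rho_0$, which is precisely $v_0$. For $k\ge 2$, differentiating \eqref{eqP} $k-1$ times in $t$ and evaluating at $t=0$ produces relations of the form
\[
\Div\phi_k=F_k(\rho_0;\phi_1,\dots,\phi_{k-1}),\qquad d\Bigl(\sum_j\phi_{k,j}\,dx_j\Bigr)=G_k(\phi_1,\dots,\phi_{k-1}),
\]
where $F_k,G_k$ are universal polynomials in the components of $\phi_1,\dots,\phi_{k-1}$ and their first partials, read off from the wedge structure. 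Hodge decomposition together with decay then recovers each $\phi_k$ uniquely as
\[
\phi_k=-\nabla N*F_k+\mathcal{R}[G_k],
\]
with $\mathcal{R}$ a fixed bounded combination of Newtonian and Riesz operators.

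The second step is to estimate each $\phi_k$ in a norm adapted to patches, built on $\ka{1,\, \gamma}$ on each side of $\partial\Omega$ together with a boundary seminorm. This is the point where Theorem~3 enters: it furnishes the Hölder bounds for $\nabla N*$ and for the Riesz transforms $R_{j,i}$ when applied to data of patch type. The delicate aspect is that the nonlinear right-hand sides $F_k, G_k$ involve products of first derivatives of $\phi_1,\dots,\phi_{k-1}$ which in general jump across $\partial\Omega$, so one must split the estimate into an interior and a boundary piece and exploit the jump formula for the Riesz kernel proved in Section~4 to bound the latter. The structural outcome is a convolution-type inequality of the form
\[
\|\phi_k\|_\ast\le C\sum_{s\le k-1}\sum_{k_1+\cdots+k_s=k-1}\|\phi_{k_1}\|_\ast\cdots\|\phi_{k_s}\|_\ast,
\]
with a constant $C=C(\rho_0,\Omega)$ independent of $k$.

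Feeding this into the combinatorial bootstrap of \cite{FrZh,BurMat} yields $\|\phi_k\|_\ast\le M^k\,k!/k^2$ for a constant $M=M(\rho_0,\Omega)$, so that $\sum\phi_k(x)t^k/k!$ converges uniformly on compacts for $|t|<1/M$ and the sum $\phi$ is analytic in $t$ with values in the patch-adapted space. Property~(1) then follows from the identity $\det J_x\phi(x,t)=1/(1-t\rho_0(x))$ established in the previous subsection: for $t<T_0\le\min\{T_1,T^*,1/\|\rho_0\|_\infty\}$ the Jacobian is bounded away from zero and $\phi(\cdot,t)-\mathrm{Id}$ is uniformly bounded, so $\phi(\cdot,t)$ is a proper local $\ka{1}$-diffeomorphism on each connected component of $\R^n\setminus\partial\Omega$, hence a global homeomorphism. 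Property~(2) follows from the recursion itself: on the open sets $\Omega$ and $\R^n\setminus\bar\Omega$ the datum $\rho_0$ is $\ka{\infty}$, the Newtonian potential and the Riesz transforms propagate $\ka{\infty}$ regularity on those regions, and the analytic sum preserves it.

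The hard part will be the second step: extracting Theorem~3 in a form sharp enough that the constant $C$ does not drift with $k$, and tracking the proliferation of Hölder seminorms when products of first partials of the $\phi_j$ are hit by Riesz operators across a $\ka{1,\, \gamma}$ boundary. Once the clean convolution-type recursion on norms is in hand, the combinatorial convergence argument and the verification of~(1)--(2) are essentially formal.
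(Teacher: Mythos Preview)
Your plan is essentially the paper's own: power-series ansatz, identification of coefficients via the two equations of \eqref{eqP}, Hodge recovery of each coefficient through Newtonian/Riesz operators, patch-adapted H\"older estimates (Theorem~\ref{thmR}) together with the jump formula at $\partial\Omega$, and a combinatorial closure. Two points deserve correction. First, your Jacobian identity is inverted: the first equation of \eqref{eqP} reads $\partial_t\det J_x\phi=-\rho_0$, so $\det J_x\phi(x,t)=1-t\rho_0(x)$, not $1/(1-t\rho_0(x))$; this does not affect the argument for nonvanishing but you should fix the sign. Second, your homeomorphism argument is incomplete: a proper local $\ka{1}$-diffeomorphism on each component of $\R^n\setminus\partial\Omega$ does not by itself yield a global homeomorphism of $\R^n$, since you must glue across $\partial\Omega$ where $\phi(\cdot,t)$ is only continuous, and the components need not be simply connected. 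The paper instead argues that $\phi(\R^n,t)$ is closed, $\phi(\cdot,t)$ is injective, and $\phi(\R^n,t)=\R^n$, invoking the planar case in \cite{BurMat}; you will need a degree-type or invariance-of-domain argument at this step.

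On the combinatorial closure, the paper does not run the Frisch--Zheligovsky $M^k k!/k^2$ bound you propose. Instead it packages the norms into partial-sum generating functions $h^{(N)}_{j,i}(\tau)=\sum_{s\le N}(\alpha^{(s)}_{j,i}+\beta^{(s)}_{j,i})\tau^s$, derives a recursive differential inequality, integrates it, and then shows the resulting scalar inequality $h\le c\tau+n(n-1)Mh^2+n!\,Mh^n$ has a uniform-in-$N$ solution by locating a root of an auxiliary polynomial. Your route should also work---the nonlinearity has degree at most $n$, not $k-1$ as you wrote, which is exactly what makes the Catalan-type estimate close---but the paper's generating-function argument is what is actually carried out.
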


	It is worth  noticing that the system (\ref{eqP}) has not a unique solution but, by construction, it has a unique analytic solution. Our goal is to prove that $\phi$ coincides in this interval with the unique flow given by the solution of equation (\ref{eq(A)}) .
	
	As we have seen above, the velocity field associated to $\psi$, $$v(y,\, t)=(\psi(\ ,\, t)^{-1})^*(\sum_{j=1}^n(-1)^{j-1}\frac{\partial\psi}{\partial t}(\ ,\, t)\, \widehat{d\psi_j(\ ,\, t)})(y)$$
	satisfies that $$d_y v(y,\, t)=\rho(y,\, t)\, V=\psi(\ \, t)^*(\rho_0\, V)(y).$$
	
	Analogously, let us define $$u(y,\, t)=(\phi(\ ,\, t)^{-1})^*(\sum_{j=1}^n(-1)^{j-1}\frac{\partial\phi}{\partial t}(\ ,\, t)\, \widehat{d\phi_j(\ ,\, t)})(y).$$ Then $$d_y u(y,\, t)=(\phi(\ ,\, t)^{-1})^*(d_x(\sum_{j=1}^n(-1)^{j-1}\frac{\partial\phi}{\partial t}(\ ,\, t)\, \widehat{d\phi_j(\ ,\, t)}))(y)$$
	$$=(\phi(\ ,\, t)^{-1})^*(\rho_0\, V)(y)\stackrel{\text{def}}{=}\Xi(y,\, t),$$ and 
	
	\begin{prop} The $n$-form $\Xi$ provides a weak solution of the aggregation equation with initial datum $\rho_0\, V$.  
	\end{prop}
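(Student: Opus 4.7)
The goal is to show that, together with the velocity form $u$ defined just above, the density $n$-form $\Xi$ satisfies \eqref{eq(A)} in the weak sense with initial datum $\rho_0\, V$. The plan is to transport the two identities of \eqref{eqP} from the Lagrangian picture (in the variable $x$) to the Eulerian picture (in the variable $y$) via $(\phi(\cdot,t)^{-1})^*$, and then invoke a Lie-derivative identity along the flow $\phi$.

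The first step is to verify that $\phi$ is the flow of the vector field $U$ corresponding to $u$. Writing $u=\sum_{j=1}^n(-1)^{j-1}U_j\,\widehat{dy_j}$ and using $\phi^*\widehat{dy_j}=\widehat{d\phi_j}$, the defining identity $\phi^*u=\sum_j(-1)^{j-1}\frac{\partial\phi_j}{\partial t}\widehat{d\phi_j}$ forces $U_j\circ\phi=\partial_t\phi_j$, i.e.\ $\partial_t\phi=U\circ\phi$. Pulling back the second line of \eqref{eqP} by $(\phi^{-1})^*$ in the same way yields $d(\star u)=0$. Combined with the identity $d u=\Xi$ recorded just above the proposition, the decay of $u$ at infinity (which follows from the compact support of $\rho_0$ propagating under the homeomorphism $\phi(\cdot,t)$), and the uniqueness statement in the Proposition of Section 2.1, this identifies $u$ with the velocity field reconstructed from the density $\rho_\Xi:=\Xi/V$ by convolution with the Newton kernel; this is precisely the second line of \eqref{eq(A)}.

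Next, by construction $\phi^*\Xi=\phi^*(\phi(\cdot,t)^{-1})^*(\rho_0\,V)=\rho_0\,V$, a $t$-independent $n$-form, so $\partial_t(\phi^*\Xi)=0$. The standard Lie-derivative identity $\partial_t(\phi^*\alpha)=\phi^*(\partial_t\alpha+\mathcal{L}_U\alpha)$ for a time-dependent form $\alpha$ along the flow of $U$ then gives $\phi^*(\partial_t\Xi+\mathcal{L}_U\Xi)=0$, so $\partial_t\Xi+\mathcal{L}_U\Xi=0$ pointwise off $\phi(\partial\Omega,t)$, where $\phi(\cdot,t)$ is a $\ka{\infty}$ diffeomorphism by Theorem \ref{T1}. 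For the top-degree form $\Xi$ Cartan's formula collapses to $\mathcal{L}_U\Xi=d(U\lrcorner\Xi)=d(\rho_\Xi\,\omega_U)$ with $\omega_U=U\lrcorner V$, turning the equation into $\partial_t\Xi+d(\rho_\Xi\,\omega_U)=0$, which is exactly the continuity equation of \eqref{eq(A)} in form language. The initial condition $\Xi(\cdot,0)=\rho_0\,V$ follows from $\phi(\cdot,0)=\mathrm{id}$.

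Finally, to upgrade these pointwise identities to a weak solution on all of $\R^n\times[0,T_0)$, I would pair against a test function $\varphi\in\mathcal{D}(\R^n\times[0,T_0))$ and split the integral on $\phi(\Omega,t)$ and on its complement. Boundary contributions across $\phi(\partial\Omega,t)$ vanish because this set has Lebesgue measure zero (being a $\ka{1,\gamma}$ hypersurface by the persistence of boundary regularity cited from \cite{BeGaLaVe}) and because the normal component of $u$ is continuous across it by the jump formula for the Newton kernel recorded in Section 4 and the Appendix. The main obstacle is precisely the latter verification: controlling the boundary terms despite the only $\ka{1,\gamma}$ regularity of $\phi$ transverse to $\partial\Omega$, which is exactly what motivates the technical jump-formula machinery developed later in the paper.
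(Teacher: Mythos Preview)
Your argument is correct in spirit but takes a harder route than the paper. The paper's proof is the standard ``pushforward of a measure by a flow solves the continuity equation weakly'' computation: pair $\Xi$ against a test function $\lambda\in\ka{1}([0,T];\ka{1}_0(\R^n))$, use the change of variables $y=\phi(x,t)$ (which reduces $\int\lambda(y,t)\,\Xi(y,t)$ to $\int\lambda(\phi(x,t),t)\,\rho_0(x)\,V(x)$ by the very definition of $\Xi$), apply the fundamental theorem of calculus in $t$, expand $\frac{d}{dt}\lambda(\phi(x,t),t)=\partial_t\lambda+\sum_j\partial_{y_j}\lambda\cdot\partial_t\phi_j=\partial_t\lambda+\sum_j\partial_{y_j}\lambda\cdot u_j(\phi)$, and change variables back. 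All derivatives fall on the smooth test function $\lambda$, so no regularity of $\phi$ in $x$ beyond the homeomorphism property of Theorem~\ref{T1} is needed, and no boundary terms appear.

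By contrast, you first establish the pointwise continuity equation via the Lie-derivative identity on $\R^n\setminus\phi(\partial\Omega,t)$ and then worry about reconciling the two sides across the interface. That worry is an artifact of your method: the ``main obstacle'' you flag, and the jump-formula machinery you invoke to resolve it, are simply not needed here. Your step~2 (identifying $u$ with the Newton-kernel velocity via $du=\Xi$, $d\star u=0$ and decay) is correct and is indeed required for the full statement that $\Xi$ solves the \emph{aggregation} equation, not merely a continuity equation with some velocity; the paper is terser about this point and effectively folds it into the surrounding discussion. So: your proof is valid, but the paper's change-of-variables argument is both shorter and avoids the boundary analysis entirely.
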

	\begin{proof} If $\lambda\in\ka{1}([0,\, T];\, \ka{1}_0(\R^n))$, we have that 
		$$\int_{\R^n} \lambda(y,\, T)\, \Xi(y,\, T)-\int_{\R^n} \lambda(x,\, 0)\, \rho_0(x)\, V$$
		$$=\int_{\R^n} \lambda(y,\, T)\, (\phi(\ ,\, T)^{-1})^*(\rho_0\, V)(y)-\int_{\R^n} \lambda(x,\, 0)\, (\phi(\ ,\, 0)^{-1})^*(\rho_0\, V)(x)$$
		$$=\int_{\R^n} \lambda(\phi(x,\, T),\, T)\, \rho_0(x)\, V-\int_{\R^n} \lambda(\phi(x,\, 0),\, 0)\, \rho_0(x)\, V$$
		$$=\int_{\R^n} \{\lambda(\phi(x,\, T),\, T)-\lambda(\phi(x,\, 0),\, 0)\}\, \rho_0(x)\, V$$
		$$=\int_{\R^n}\, (\int_0^T \frac{d\ }{dt}\lambda(\phi(x,\, t),\, t)\, \, dt)\,  \rho_0(x)\, V$$
		$$=\int_{\R^n}\, (\int_0^T \{\frac{\partial\lambda }{\partial t}(\phi(x,\, t),\, t)\, \, dt)+\sum_{j=1}^n\, \frac{\partial\lambda}{\partial y_j}(\phi(x,\, t),\, t)\, \frac{\partial\phi_j}{\partial t}(x,\, t)\}\, \, dt)\,  \rho_0(x)\, V$$
		$$=\int_{\R^n}\, (\int_0^T \{\frac{\partial\lambda }{\partial t}(\phi(x,\, t),\, t)\, \, dt)+\sum_{j=1}^n\, \frac{\partial\lambda}{\partial y_j}(\phi(x,\, t),\, t)\, u_j(\phi(x,\, t),\, t)\}\, \, dt)\,  \rho_0(x)\, V$$
		$$=\int_0^T\, dt\, \int_{\R^n}\{\frac{\partial\lambda }{\partial t}+\sum_{j=1}^n\, \frac{\partial\lambda}{\partial y_j}\, u_j\}(\phi(x,\, t),\, t)\,  \rho_0(x)\, V$$
		$$=\int_0^T\, dt\, \int_{\R^n}\{\frac{\partial\lambda }{\partial t}+\sum_{j=1}^n\, \frac{\partial\lambda}{\partial y_j}\, u_j\}(y,\, t)\, (\phi(\ ,\, t)^{-1})^*( \rho_0(x)\, V)(y)$$

	\end{proof}
	
	\begin{cor} If $\rho_0$ corresponds to a patch, then
		$$\rho\, V=\Xi$$ almost everywhere.
	\end{cor}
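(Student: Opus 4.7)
The plan is to invoke the uniqueness of weak solutions of the aggregation equation established in \cite{BeLaLe}. The preceding proposition already identifies $\Xi$ as a weak solution of (\ref{eq(A)}) with initial datum $\rho_0\,V$, and by construction $\rho\,V$ is another such weak solution. Uniqueness in a suitable class will then force $\rho\,V=\Xi$ almost everywhere on $\R^n\times[0,T_0]$, which is exactly the corollary.

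The first step is to verify that (the coefficient of) $\Xi$ belongs to the admissibility class of \cite{BeLaLe}, namely $\ka{}([0,T_0],L^1(\R^n))\cap L^\infty(\R^n\times(0,T_0))$. By Theorem~\ref{T1}, $\phi(\cdot,t)$ is a homeomorphism of $\R^n$ and a $\ka{\infty}$ diffeomorphism away from $\partial\Omega$, so writing
$$\Xi(y,t)=\rho_0\bigl(\phi(\cdot,t)^{-1}(y)\bigr)\,\det J_y\phi(\cdot,t)^{-1}\,V$$
and performing the change of variables $y=\phi(x,t)$ yields $\|\Xi(\cdot,t)\|_{L^1}=\|\rho_0\|_{L^1}$, finite because $\rho_0$ is a bounded patch.

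For the $L^\infty$ bound I would replay, now for $\phi$ in place of $\psi$, the computation that led to equation (\ref{gral1}) in the proof of Proposition~\ref{main}. Since $\phi$ satisfies the same system (\ref{eqP}) as $\psi$, the identical manipulation yields an ODE for $\det J_x\phi(x,t)$ along trajectories, and solving it as there gives the explicit identity
$$\Xi\bigl(\phi(x,t),t\bigr)=\frac{\rho_0(x)}{1-t\,\rho_0(x)}\,V,$$
uniformly bounded on $[0,T_0]$ after possibly shrinking $T_0$ so that $T_0<1/\|\rho_0\|_\infty$. Continuity in $t$ and the matching of initial data ($\phi(\cdot,0)=\mathrm{id}$, so $\Xi(\cdot,0)=\rho_0\,V$) are then immediate.

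The main obstacle I anticipate is technical rather than conceptual: the change of variables and the Jacobian ODE both take place across $\partial\Omega$, where $\phi(\cdot,t)$ is only a homeomorphism and not a smooth diffeomorphism, so the calculations have to be justified either by an approximation argument or by exploiting that $\partial\Omega$ has Lebesgue measure zero and that the identities in question already hold pointwise off it. Once this point is secured, the uniqueness theorem of \cite{BeLaLe} directly delivers $\rho\,V=\Xi$ almost everywhere, finishing the proof.
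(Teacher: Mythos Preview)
Your proposal is correct and follows essentially the same route as the paper: invoke the uniqueness of weak solutions for patch initial data from \cite{BeLaLe}, applied to the two weak solutions $\rho\,V$ and $\Xi$ constructed from $\psi$ and $\phi$ respectively. The paper's own proof is in fact just this one-line appeal to uniqueness and does not spell out the verification that $\Xi$ lies in the admissibility class $\ka{}([0,T_0],L^1)\cap L^\infty$; your additional care on this point (the $L^1$ conservation via change of variables and the $L^\infty$ bound via the Jacobian ODE) fills in details the paper leaves implicit, and your caveat about the boundary is well taken but, as you note, harmless since $\partial\Omega$ is Lebesgue-null.
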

	\begin{proof}
		As been proven in \cite{BeLaLe}, the aggregation equation having a patch as initial datum has a unique solution in the weak sense, so $\rho\, V=\Xi$ in the weak sense. Then $\rho\, V=\Xi$ almost everywhere.  
	\end{proof}
	
	To finish the proof of Theorem 1  we consider $w=u-v$ and then $$d_yw(y,\, t)=d_y(u-v)(y,\, t)=(\rho\, V-\Xi)(y,\, t)=0$$ and also, from the second equation in (\ref{eqP}) $$(d\star w)(y,\, t)=0.$$ This implies that $$\begin{cases} dw=0\\
		
		\delta w=0
		
	\end{cases}$$
	
	Moreover, $$\lim_{\|x\|\to\infty} v(x,\, t)=0$$ by definition and $$\lim_{\|x\|\to\infty} u(x,\, t)=0$$ by construction (the previous theorem). So $u=v$ and then $\phi=\psi$. 
	
	\subsection{Global analyticity in time.} We have that there exists a number $T_1>0$ and a function $\psi\colon\R^n\rightarrow\R^n$, such that $t\rightarrow\psi(z,\, t)$ is real analytic in $(0,\, T_1)$.
	
	
	If $T_1=T^*$ there is nothing to say. Otherwise, we take in account that the rescaling 
	$$
	s(t)=\ln\biggl(\frac{1}{1-ct}\biggr)
	$$ 
	and 
	$$
	\varpi(z,\, s)=\frac{1-c\, t(s)}{c}\, \rho(z,\, t(s))
	$$ 
	transforms the problem (\ref{eq(A)}) with initial condition $\rho_0(x)=c\, \chi_\Omega(x)$ in the problem (\ref{eqtildeA}) with initial condition $\chi_\Omega(x)$.

	For the problem $(\tilde A)$ it is proven in \cite{BeGaLaVe} that for every 
	$s\!\in\![0,\infty)$, if $\tilde\psi$ is the related flow, then $\tilde\psi(\partial\Omega,\, s)$ is a $\ka{1,\gamma}$ embedded submanifold of $\R^n$ of real dimension $n-1$. Since the rescalings above do not affect the $x$ variable, the same regularity is true in the case of $(\tilde{A})$, for $t\in[0,T^*)$.
	
	Then the datum $$
	\rho(x,\, T_1)=\frac{c}{(1-c\, T_1)^3}\, \rho_0(x),
	$$
	is used as initial density to iterate the procedure because the boundary $\partial\Omega_{T_1}$ is $\ka{1,\, \gamma}$, providing a new $T_2>T_1$ and we have analyticity in~$(0,\, T_2)$.
	
	An argument of connectivity and the uniqueness of solution of $(A)$ conclude that the flow is analytic in $[0,\, T^*)$.    
	
	
	It is worth noticing that, by duality, we also have analyticity for the case of the corresponding flow to the equation $(\tilde{A})$.  
	

	\section{Construction of an analytic solution for the flow-density equation (\ref{eqP}).} 
	
	We use the relationships obtained in proposition \ref{main} for constructing an analytic solution of the system (\ref{eqP}) by an "a priori" procedure. We assume that we have a solution analytic in the variable $t$. Then we use proposition \ref{main} to estimate the size of the coefficients of the corresponding power series in terms of $\rho_0$. These estimates are the key point for the analyticity of the components of the flow. 
	
	More precisely if we assume that for $j=1,\dots,n$, we have  $$\psi_j(x,\, t)=\sum_{s=0}^\infty\xi_j^{(s)}(x)\, t^s=x_j+\sum_{s=1}^\infty\xi_j^{(s)}(x)\, t^s.$$

	\subsection{A priori relationships} Using formula $(10)$ we obtain $$0=\sum_{j=1}^n d\frac{\partial\psi_j}{\partial t}\wedge d\psi_j=\sum_{s=0}^\infty t^s\, \sum_{p=0}^s (p+1)\, \sum_{j=1}^n d\xi_j^{(p+1)}\wedge d\xi_j^{(s-p)}.$$
	
	This implies that $$0=\sum_{j=1}^n d\xi_j^{(1)}\wedge d\xi_j^{(0)}=\sum_{j=1}^n d\xi_j^{(1)}\wedge dx_j$$ and $$(s+1)\, \sum_{j=1}^n d\xi_j^{(s+1)}\wedge dx_j=-\sum_{p=0}^{s-1} (p+1)\, \sum_{j=1}^n d\xi_j^{(p+1)}\wedge d\xi_j^{(s-p)}.$$
	
	Using formula $(9)$ we also have that $$-\rho_0\ V=\sum_{j=1}^n d\psi_1\wedge\dots\wedge d\psi_{j-1}\wedge  d\frac{\partial\psi_j}{\partial t}\wedge d\psi_{j+1}\wedge\dots\wedge d\psi_n$$
	$$=\sum_{j=1}^n(\sum_{s_1=0}^\infty t^{s_1}\, d\xi_1^{(s_1)})\wedge\dots\wedge(\sum_{s_{j-1}=0}^\infty t^{s_{j-1}}\, d\xi_{j-1}^{(s_{j-1})})\wedge(\sum_{s_j=0}^\infty (s_j+1)\, t^{s_j}\, d\xi_j^{(s_j+1)})$$
	$$\wedge(\sum_{s_{j+1}=0}^\infty t^{s_{j+1}}\, d\xi_{j+1}^{(s_{j+1})})\wedge\dots\wedge(\sum_{s_n=0}^\infty t^{s_n}\, d\xi_n^{(s_n)})$$
	$$=\sum_{s=0}^\infty t^s\, \sum_{s_1+\dots+s_n=s} \sum_{j=1}^n(s_j+1)\, d\xi_1^{(s_1)}\wedge\dots\wedge d\xi_{j-1}^{(s_{j-1})}\wedge d\xi_j^{(s_j+1)}\wedge d\xi_{j+1}^{(s_{j+1})}\wedge\dots\wedge d\xi_n^{(s_n)}.$$
	
	This implies that $$-\rho_0\, V=\sum_{j=1}^n d\xi_1^{(0)}\wedge\dots\wedge d\xi_{j-1}^{(0)}\wedge d\xi_j^{(1)}\wedge d\xi_{j+1}^{(0)}\wedge\dots\wedge d\xi_n^{(0)}$$
	$$=\sum_{j=1}^n dx_1\wedge\dots\wedge dx_{j-1}\wedge d\xi_j^{(1)}\wedge dx_{j+1}\wedge\dots\wedge dx_n,$$ and also that for any $s>0$, 
	$$(s+1)\, \sum_{j=1}^n d\xi_1^{(0)}\wedge\dots\wedge d\xi_{j-1}^{(0)}\wedge d\xi_j^{(s+1)}\wedge d\xi_{j+1}^{(0)}\wedge\dots\wedge d\xi_n^{(0)}$$
	$$=(s+1)\, \sum_{j=1}^n dx_1\wedge\dots\wedge dx_{j-1}\wedge d\xi_j^{(s+1)}\wedge dx_{j+1}\wedge\dots\wedge dx_n$$
	$$=-\sum_{j=1}^n\ \  \sum_{s_1+\dots+s_n=s-1} (s_j+1)\, d\xi_1^{(s_1)}\wedge\dots\wedge d\xi_{j-1}^{(s_{j-1})}\wedge d\xi_j^{(s_j+1)}\wedge d\xi_{j+1}^{(s_{j+1})}\wedge\dots\wedge d\xi_n^{(s_n)}.$$
	
	And then the coefficient functions of $\psi$ and the density are related by the formulas
	
	\begin{prop} \begin{enumerate}
			\item $\sum_{j=1}^n (-1)^{j} d\xi_j^{(1)}\wedge\widehat{dx_{j}}=\rho_0\, V$
			
			\item $\sum_{j=1}^n d\xi_j^{(1)}\wedge dx_j=0$
			
			\item If $s>0$, then $$\sum_{j=1}^n (-1)^{j}
			d\xi_j^{(s+1)}\wedge\widehat{dx_{j}}$$
			$$=\frac{1}{s+1} 
			\sum_{j=1}^n\ \  \sum_{s_1+\dots+s_n=s-1} (s_j+1)\, d\xi_1^{(s_1)}\wedge\dots\wedge d\xi_{j-1}^{(s_{j-1})}\wedge d\xi_j^{(s_j+1)}\wedge d\xi_{j+1}^{(s_{j+1})}\wedge\dots\wedge d\xi_n^{(s_n)}$$
			
			\item If $s>0$, then $$\sum_{j=1}^n d\xi_j^{(s+1)}\wedge dx_j=\frac{-1}{s+1}\sum_{p=0}^{s-1} (p+1)\, \sum_{j=1}^n d\xi_j^{(p+1)}\wedge d\xi_j^{(s-p)}.$$
		\end{enumerate}
		
	\end{prop}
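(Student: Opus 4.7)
The plan is to substitute the formal power series $\psi_j(x,t)=x_j+\sum_{s\ge 1}\xi_j^{(s)}(x)\,t^s$ into each of the two equations of the system (\ref{eqP}) and equate the coefficients of $t^s$ on both sides. Setting $\xi_j^{(0)}(x)=x_j$ so that $d\xi_j^{(0)}=dx_j$, one has
$$d\psi_j(\,\cdot\,,t)=\sum_{s\ge 0}t^s\,d\xi_j^{(s)},\qquad d\!\left(\frac{\partial\psi_j}{\partial t}\right)(\,\cdot\,,t)=\sum_{s\ge 0}(s+1)\,t^s\,d\xi_j^{(s+1)}.$$
Only formal identities among Taylor coefficients are at stake here, so no convergence input is needed.

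I would first dispatch equation (\ref{rot1}), which is the easier one. A Cauchy product of the two series above, followed by summation over $j$, gives
$$0=\sum_{j=1}^n d\!\left(\frac{\partial\psi_j}{\partial t}\right)\wedge d\psi_j=\sum_{s\ge 0}t^s\sum_{p=0}^{s}(p+1)\sum_{j=1}^n d\xi_j^{(p+1)}\wedge d\xi_j^{(s-p)}.$$
The $t^0$-coefficient yields (2). For $s\ge 1$, I would isolate the $p=s$ summand on the left, where $d\xi_j^{(s-p)}=dx_j$; this contributes $(s+1)\sum_j d\xi_j^{(s+1)}\wedge dx_j$, and moving the remaining terms to the right gives (4).

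For equation (\ref{div1}) the strategy is the same in spirit but the bookkeeping is heavier because each $\widehat{d\psi_j}$ is a product of $n-1$ factors. Rewriting (\ref{div1}) by absorbing the sign $(-1)^{j-1}$ into the wedge ordering, one has
$$-\rho_0\,V=\sum_{j=1}^n d\psi_1\wedge\cdots\wedge d\psi_{j-1}\wedge d\!\left(\tfrac{\partial\psi_j}{\partial t}\right)\wedge d\psi_{j+1}\wedge\cdots\wedge d\psi_n,$$
and a multivariate Cauchy product of the series expansions factor by factor gives the $t^s$-coefficient on the right as
$$\sum_{j=1}^n\sum_{s_1+\dots+s_n=s}(s_j+1)\,d\xi_1^{(s_1)}\wedge\cdots\wedge d\xi_{j-1}^{(s_{j-1})}\wedge d\xi_j^{(s_j+1)}\wedge d\xi_{j+1}^{(s_{j+1})}\wedge\cdots\wedge d\xi_n^{(s_n)}.$$
For $s=0$ the only tuple is $(0,\dots,0)$; every $d\xi_k^{(0)}$ reduces to $dx_k$, and the identity collapses, after using $dx_1\wedge\cdots\wedge d\xi_j^{(1)}\wedge\cdots\wedge dx_n=(-1)^{j-1}d\xi_j^{(1)}\wedge\widehat{dx_j}$, to statement (1). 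For $s\ge 1$ the coefficient vanishes, so I peel off the $n$ ``leading'' summands in which all $s_k=0$ for $k\ne j$ (forcing $s_j=s$); these combine to $(s+1)\sum_j(-1)^{j-1}d\xi_j^{(s+1)}\wedge\widehat{dx_j}$. Moving the remaining ``nonlinear'' contributions to the right and dividing by $s+1$ produces (3), with the sign flip $(-1)^{j-1}\mapsto(-1)^{j}$ accounting for the sign change.

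The only care required is combinatorial: keeping $(-1)^{j-1}$ versus $(-1)^{j}$ consistent during the translation between the $\widehat{dx_j}$ notation and the explicit wedge ordering, and correctly indexing the ``remainder'' multi-sum on the right of (3) once the leading piece has been extracted. No analytic input is invoked; the whole statement is a purely formal identity between Taylor coefficients, whose role in the sequel is to seed the recursive estimates driving the convergence argument of Section~4.
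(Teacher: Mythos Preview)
Your proposal is correct and follows essentially the same approach as the paper: substitute the formal power series for $\psi_j$ and $\partial_t\psi_j$ into the two equations of system~(\ref{eqP}), form the (bivariate, respectively $n$-fold) Cauchy product, equate the coefficients of $t^s$, and for $s\ge 1$ isolate the unique summand in which the non-differentiated indices all vanish (i.e.\ $p=s$ in the rotation identity, $s_k=0$ for $k\neq j$ in the divergence identity). This is precisely the computation the paper carries out in the paragraphs immediately preceding the Proposition.
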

	
	\begin{cor} Considering $\mu^{(s)}=\sum_{j=1}^n\xi_j^{(s)}\ dx_j$, we have 
		
		\begin{enumerate}
			\item $\mu^{(0)}=d(\frac{\|x\|^2}{2})$.
			
			\item $\begin{cases} d\mu^{(1)}=0\\
				
				d\star\mu^{(1)}=-\varrho_0\, V
				
			\end{cases}$
			
			\item If $s>0$, then 
			
			$\begin{cases} d\mu^{(s+1)}=\frac{-1}{s+1}\sum_{p=0}^{s-1} (p+1)\, \sum_{j=1}^n d\xi_j^{(p+1)}\wedge d\xi_j^{(s-p)}\\
				
				\\
				
				d\star\mu^{(s+1)}=\frac{-1}{s+1} 
				\sum_{j=1}^n\ \  \sum_{s_1+\dots+s_n=s-1} (s_j+1)\, d\xi_1^{(s_1)}\wedge\\
				
				\\
				
				\ \ \ \dots\wedge d\xi_{j-1}^{(s_{j-1})}\wedge d\xi_j^{(s_j+1)}\wedge d\xi_{j+1}^{(s_{j+1})}\wedge\dots\wedge d\xi_n^{(s_n)}.
				
			\end{cases}$
		\end{enumerate}
		
	\end{cor}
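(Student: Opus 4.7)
The plan is to derive each item of the corollary by direct substitution from the corresponding line of the preceding proposition, using only the Hodge star identity for $1$-forms. First I would record the formula $\star dx_j = (-1)^{j-1}\,\widehat{dx_j}$: this follows from $\star dx_I = \epsilon^n_{I,I'}\, dx_{I'}$ applied to the singleton $I=\{j\}$, whose complement $I'=(1,\dots,\widehat{j},\dots,n)$ requires exactly $j-1$ adjacent transpositions to bring $(j,1,\dots,\widehat{j},\dots,n)$ to $(1,2,\dots,n)$. Extending by linearity I therefore obtain, for every $s$,
\begin{equation*}
\star\mu^{(s)}=\sum_{j=1}^n (-1)^{j-1}\,\xi_j^{(s)}\,\widehat{dx_j},\qquad d\star\mu^{(s)}=\sum_{j=1}^n (-1)^{j-1}\,d\xi_j^{(s)}\wedge\widehat{dx_j}.
\end{equation*}

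Item (1) is immediate from the initial condition $\psi_j(x,0)=x_j$, which gives $\xi_j^{(0)}(x)=x_j$ and hence $\mu^{(0)}=\sum_j x_j\,dx_j=d(\|x\|^2/2)$. For item (2), the identity $d\mu^{(1)}=\sum_j d\xi_j^{(1)}\wedge dx_j=0$ is precisely item (2) of the preceding proposition. For the codifferential, the display above yields $d\star\mu^{(1)}=-\sum_j(-1)^j\, d\xi_j^{(1)}\wedge\widehat{dx_j}$, and item (1) of the proposition evaluates the right side as $-\rho_0\,V$, giving the claim.

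Item (3) is handled by the same recipe. I would compute $d\mu^{(s+1)}=\sum_j d\xi_j^{(s+1)}\wedge dx_j$ and invoke item (4) of the proposition to rewrite this as the stated convolution-style sum of $d\xi_j^{(p+1)}\wedge d\xi_j^{(s-p)}$. For the Hodge dual, the formula from the first paragraph gives $d\star\mu^{(s+1)}=-\sum_j(-1)^j d\xi_j^{(s+1)}\wedge\widehat{dx_j}$; applying item (3) of the proposition and multiplying through by $-1$ produces exactly the sum over the multi-indices $s_1+\cdots+s_n=s-1$ with the prefactor $-1/(s+1)$.

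The whole argument is therefore a one-step substitution in each case, and the only genuine subtlety is sign-bookkeeping for the Hodge star. I expect no serious obstacle: once $\star dx_j=(-1)^{j-1}\widehat{dx_j}$ is in place and one observes that $(-1)^{j-1}=-(-1)^j$, each line of the corollary collapses to the matching line of the proposition. No further analytic input is needed beyond what has already been established.
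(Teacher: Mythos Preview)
Your proposal is correct and follows exactly the approach the paper intends: the corollary is stated without an explicit proof because it is an immediate reformulation of the preceding proposition via the identity $\star dx_j=(-1)^{j-1}\widehat{dx_j}$, and your argument supplies precisely those substitution and sign-bookkeeping details.
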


	This corollary implies that $$\delta\mu^{(1)}=-\star d\star\mu^{(1)}=\varrho_0\, \star V$$ and $$\delta\mu^{(s+1)}=(-1)^{n^2+1}\star d\star\mu^{(s+1)}$$
	$$=\star \left(\frac{(-1)^{n^2}}{s+1} 
	\sum_{j=1}^n\ \  \sum_{s_1+\dots+s_n=s-1} (s_j+1)\, d\xi_1^{(s_1)}\wedge\\
	\ \ \ \dots\wedge d\xi_{j-1}^{(s_{j-1})}\wedge d\xi_j^{(s_j+1)}\wedge d\xi_{j+1}^{(s_{j+1})}\wedge\dots\wedge d\xi_n^{(s_n)}\right).$$
	
	And since $\xi_j^{(s)}=e_j\lrcorner\mu^{(s)}$, defining  $$\Upsilon^{(s+1)}[\mu^{(1)},\, \dots,\, \mu^{(s)}]=\frac{-1}{s+1}\sum_{p=0}^{s-1} (p+1)\, \sum_{j=1}^n d\xi_j^{(p+1)}\wedge d\xi_j^{(s-p)}$$ and $$\Xi^{(s+1)}[\mu^{(1)},\, \dots,\, \mu^{(s)}]$$ 
	$$=\star \left(\frac{(-1)^{n^2}}{s+1} 
	\sum_{j=1}^n\ \  \sum_{s_1+\dots+s_n=s-1} (s_j+1)\, d\xi_1^{(s_1)}\wedge\\
	\ \ \ \dots\wedge d\xi_{j-1}^{(s_{j-1})}\wedge d\xi_j^{(s_j+1)}\wedge d\xi_{j+1}^{(s_{j+1})}\wedge\dots\wedge d\xi_n^{(s_n)}\right),$$ we finally have
	
	\begin{cor}\label{c3} With the previous notations
		
		\begin{enumerate}
			\item $\mu^{(0)}=d(\frac{\|x\|^2}{2})$.
			
			\item $\begin{cases} d\mu^{(1)}=0\\
				
				\delta\mu^{(1)}=\varrho_0\ \star V
				
			\end{cases}$
			
			\item If $s>0$, then 
			
			$\begin{cases} d\mu^{(s+1)}=\Upsilon^{(s+1)}[\mu^{(1)},\, \dots,\, \mu^{(s)}]\\
				
				\\
				
				\delta\mu^{(s+1)}=\Xi^{(s+1)}[\mu^{(1)},\, \dots,\, \mu^{(s)}].
				
			\end{cases}$
		\end{enumerate}
		
	\end{cor}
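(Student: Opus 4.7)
This corollary is a routine reformulation of the previous corollary: it replaces the $\widehat{dx_j}$-expressions by codifferentials and uses the definitions of $\Upsilon^{(s+1)}$ and $\Xi^{(s+1)}$. The only work is bookkeeping with the Hodge star and the sign factor in $\delta=\pm\star d\star$.

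For item (1), the plan is simply to observe that $\psi_j(x,0)=x_j$ forces $\xi_j^{(0)}(x)=x_j$, so $\mu^{(0)}=\sum_j x_j\,dx_j=d(\|x\|^2/2)$.

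For item (2), I would compute directly. From $\mu^{(1)}=\sum_j \xi_j^{(1)}\,dx_j$ we get $d\mu^{(1)}=\sum_j d\xi_j^{(1)}\wedge dx_j$, which is $0$ by item (2) of the preceding Proposition. For the codifferential, I use $\star dx_j=(-1)^{j-1}\widehat{dx_j}$ (from the sign formula $\star dx_I=\epsilon_{I,I'}^{n}dx_{I'}$ applied to $I=(j)$) to obtain
\[
\star\mu^{(1)}=\sum_{j=1}^n (-1)^{j-1}\xi_j^{(1)}\,\widehat{dx_j},\qquad d\star\mu^{(1)}=\sum_{j=1}^n (-1)^{j-1}d\xi_j^{(1)}\wedge\widehat{dx_j}=-\rho_0\,V,
\]
where the last equality is item (1) of the Proposition. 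Applying $\delta=-\star d\star$ on $1$-forms gives $\delta\mu^{(1)}=\rho_0\,\star V$.

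For item (3), the strategy is identical. Taking $d$ of $\mu^{(s+1)}=\sum_j\xi_j^{(s+1)}dx_j$ yields $\sum_j d\xi_j^{(s+1)}\wedge dx_j$, which by item (4) of the Proposition equals $\Upsilon^{(s+1)}[\mu^{(1)},\dots,\mu^{(s)}]$. For $\delta\mu^{(s+1)}$ I would repeat the computation of item (2): $d\star\mu^{(s+1)}=\sum_{j}(-1)^{j-1}d\xi_j^{(s+1)}\wedge\widehat{dx_j}$, and then substitute the right-hand side of item (3) of the Proposition for this sum. Applying $\star$ with the sign prescribed by $\delta=\pm\star d\star$ on $1$-forms matches the $(-1)^{n^{2}}/(s+1)$ factor in the definition of $\Xi^{(s+1)}$, proving the second relation.

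The only delicate step is the sign verification in the $\delta$-computation, since one must align the conventions $\star dx_j=(-1)^{j-1}\widehat{dx_j}$, $\star^{2}=(-1)^{k(n-k)}$, and $\delta=(-1)^{nk+n+1}\star d\star$ with the explicit $(-1)^{j}$ appearing in items (1) and (3) of the Proposition and with the $(-1)^{n^{2}}$ in the definition of $\Xi^{(s+1)}$. This is pure dimension-counting and is the only place where care is needed; everything else is a direct substitution.
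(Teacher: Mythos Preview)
Your proposal is correct and follows essentially the same approach as the paper: the paper derives Corollary~\ref{c3} in the short paragraph immediately preceding it by computing $\delta\mu^{(1)}=-\star d\star\mu^{(1)}$ and $\delta\mu^{(s+1)}=(-1)^{n^2+1}\star d\star\mu^{(s+1)}$ directly from the previous corollary, then introduces $\Upsilon^{(s+1)}$ and $\Xi^{(s+1)}$ as names for the resulting right-hand sides. Your sign bookkeeping is in fact more explicit than the paper's, but the argument is the same.
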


	Our next task consists in determining the $1$-forms $\mu^{(s)}$. 
	Let us assume that we know $\mu^{(l)}$ for $l\le s$ and then we take advantage of the fact that the knowledge of $d\mu^{(l)}$ and $\delta\mu^{(l)}$ determine the diagonal Laplacian operator $\triangle\mu^{(s+1)}$ in terms $\mu^{(l)}$ for all $l\le s$ and we can use the Newtonian potentials to recover $\mu^{(s+1)}$. Then we use the  following fact:
	
	\begin{prop} If $\mu$ is a $1$-form in $\R^n$, such that $$ \begin{cases} d\mu=\Upsilon\\

			\delta\mu=\Xi
		\end{cases}		
		$$ 
		then \begin{equation}\label{newt} \mu=\sum_{k=1}^n (\sum_{j<k} K_j[\Upsilon_{j,\, k}]-\sum_{j>k} K_j[\Upsilon_{k,\, j}]+K_k[\Xi])\, dx_k.\end{equation} 
	\end{prop}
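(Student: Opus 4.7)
The plan is to reduce the system $(d\mu,\delta\mu)=(\Upsilon,\Xi)$ to the scalar Laplace equation for each coefficient of $\mu$, and then invert component by component using the Newtonian potentials $K_j=\partial_j N$. Writing $\mu=\sum_{k=1}^n \mu_k\,dx_k$, the equation $d\mu=\Upsilon$ reads $\Upsilon_{j,k}=\partial_j\mu_k-\partial_k\mu_j$ for $j<k$, while $\delta\mu=\Xi$ identifies the divergence $\sum_k\partial_k\mu_k$ with $\Xi$ up to the sign fixed by the conventions of Section~2.1.

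First I would compute $\Delta\mu_k=\sum_j\partial_j^2\mu_k$ in terms of $\Upsilon$ and $\Xi$ alone. For $j\neq k$ one substitutes $\partial_j\mu_k=\partial_k\mu_j+\varepsilon_{j,k}\,\Upsilon_{\min(j,k),\,\max(j,k)}$ with $\varepsilon_{j,k}=\pm 1$ determined by the order of $j$ and $k$, and then commutes partials. Collecting the $\partial_j(\partial_k\mu_j)$ contributions across $j$ produces $\partial_k(\sum_j\partial_j\mu_j)=\partial_k\Xi$, leaving a Poisson equation of the shape
\begin{equation*}
\Delta\mu_k=\partial_k\Xi+\sum_{j<k}\partial_j\Upsilon_{j,k}-\sum_{j>k}\partial_j\Upsilon_{k,j}.
\end{equation*}
Inverting this by convolution with $N$ and applying the identity $N*\partial_j f=(\partial_j N)*f=K_j[f]$ term by term yields exactly the coefficient of $dx_k$ in the claimed formula.

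Uniqueness of the inversion is ensured by the decay built into the construction: every $\mu^{(s)}$ used in the iteration of Corollary~\ref{c3} is built from Newtonian potentials of compactly supported data and therefore tends to zero at infinity, ruling out any harmonic correction, in parallel with the argument of the first proposition of Section~2.1. The main obstacle is not conceptual but bookkeeping: one has to align the sign of $\delta$ on $1$-forms (implied by the paper's conventions for $\star$ and for the fundamental solution $N$) with the skew-symmetric labels $(j,k)$ used in $\Upsilon$. Once these signs are matched, the reduction to the Poisson equation above and the substitution $N*\partial_j=K_j$ give the stated integral representation with no further analysis required.
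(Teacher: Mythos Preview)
Your proof is correct and follows essentially the same route as the paper: both reduce to the Poisson equation $\Delta\mu_k=\partial_k\Xi+\sum_{j<k}\partial_j\Upsilon_{j,k}-\sum_{j>k}\partial_j\Upsilon_{k,j}$ and invert via $N*\partial_j=K_j$. The only cosmetic difference is that the paper arrives at this equation by invoking the Hodge Laplacian identity $\Delta\mu=(\delta d+d\delta)\mu=\delta\Upsilon+d\Xi$ and then reading off the $k$-th coefficient using the coordinate formula for $\delta$ on $2$-forms, whereas you compute $\Delta\mu_k$ by direct substitution in coordinates; these are the same computation.
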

	
	\begin{proof} We have that $\delta d\mu=\delta\Upsilon$ and $d\delta\mu=d\Xi$, so $$\triangle\mu=(\delta d+d\delta)\mu=\delta\Upsilon+d\Xi.$$ The operator $\triangle$ acts diagonally on forms and since in the scalar case $\triangle N\varphi=N\triangle\varphi=\varphi,$ for $\varphi\in\mathcal{D}(\R^n)$, we extend the definition of the operator $N$ to $k$-forms.  
		Then, under good conditions on $\Upsilon$ and $\Xi$, we can get a solution for $\mu$: $$\mu=N[\delta\Upsilon]+N[d\Xi]=\sum_{k=1}^n (N[(\delta\Upsilon)_k]+N[(d\Xi)_k])\, dx_k.$$ 
		
		If $\omega=\sum_{j<k}\omega_{j,\, k}\, dx_j\wedge dx_k,$ we have  $$\delta\omega=\sum_{k=1}^n(\sum_{j<k} \frac{\partial \omega_{j,\, k}}{\partial x_j}-\sum_{j>k} \frac{\partial \omega_{k,\, j}}{\partial x_j})\, dx_k.$$ Thereof $$\mu=\sum_{k=1}^n (\sum_{j<k} N[\frac{\partial \Upsilon_{j,\, k}}{\partial x_j}]-\sum_{j>k} N[\frac{\partial \Upsilon_{k,\, j}}{\partial x_j}]+N[\frac{\partial \Xi}{\partial x_k}])\, dx_k$$
		$$=\sum_{k=1}^n (\sum_{j<k} K_j[\Upsilon_{j,\, k}]-\sum_{j>k} K_j[\Upsilon_{k,\, j}]+K_k[\Xi])\, dx_k$$

	\end{proof}
	
	Now applying the identity (\ref{newt}) to $\mu^{(s)}$ in corollary \ref{c3} we have that $$\mu^{(1)}=\sum_j K_j[\rho_0]\, dx_j,$$ so $\xi^{(1)}_j=K_j[\rho_0]$ and from the definition of Riesz operators in (\ref{riesz})
	$$d\xi^{(1)}_j=dK_j[\rho_0]=\sum_l(\frac{\partial\ }{\partial x_l}\circ K_j)[\rho]\, dx_l=\sum_l R_{j,\, l}[\rho_0]\, dx_l$$  
	and also for $s>0$, $$\mu^{(s+1)}=\sum_{j=1}^n (\sum_{l<j} K_l[\Upsilon^{(s+1)}_{l,j}]-\sum_{l>j} K_l[\Upsilon^{(s+1)}_{j,l}]+K_j[\Xi^{(s+1)}])\, dx_j$$ and $$\xi_j^{(s+1)}=\sum_{l<j} K_l[\Upsilon^{(s+1)}_{l,j}]-\sum_{l>j} K_l[\Upsilon^{(s+1)}_{j,l}]+K_j[\Xi^{(s+1)}].$$ Therefore proceeding as above we have $$d\xi_j^{(s+1)}=\sum_i(\sum_{l<j} R_{l,\, i}[\Upsilon^{(s+1)}_{l,\, j}]-\sum_{l>j} R_{l,\, i}[\Upsilon^{(s+1)}_{j,\, l}]+R_{j,\, i}[\Xi^{(s+1)}])\, dx_i$$	
	$$=\sum_i(\{\sum_{l<j} \frac{-1}{s+1}\sum_{p=0}^{s-1} (p+1)\, \sum_{r=1}^n R_{l,\, i}[e_j\lrcorner(e_l\lrcorner(d\xi_r^{(p+1)}\wedge d\xi_r^{(s-p)}))]$$
	$$-\sum_{l>j} \frac{-1}{s+1}\sum_{p=0}^{s-1} (p+1)\, \sum_{r=1}^n R_{l,\, i}[e_l\lrcorner(e_j\lrcorner(d\xi_r^{(p+1)}\wedge d\xi_r^{(s-p)}))]$$	
	$$+\frac{(-1)^{n^2}}{s+1} 
	\sum_{k=1}^n\ \  \sum_{s_1+\dots+s_n=s-1} (s_k+1)$$
	$$ R_{k,\, i}[\star\, (d\xi_1^{(s_1)}\wedge\\
	\ \ \ \dots\wedge d\xi_{k-1}^{(s_{k-1})}\wedge d\xi_k^{(s_k+1)}\wedge d\xi_{k+1}^{(s_{k+1})}\wedge\dots\wedge d\xi_n^{(s_n)}) ]\}\, dx_i.$$

	Contracting with $e_i$ in both sides, we have that $$\frac{\partial\xi_j^{(s+1)}}{\partial x_i}=\sum_{l<j} \frac{-1}{s+1}\sum_{p=0}^{s-1} (p+1)\, \sum_{r=1}^n R_{l,\, i}[e_j\lrcorner(e_l\lrcorner(d\xi_r^{(p+1)}\wedge d\xi_r^{(s-p)}))]$$
	$$-\sum_{l>j} \frac{-1}{s+1}\sum_{p=0}^{s-1} (p+1)\, \sum_{r=1}^n R_{l,\, i}[e_l\lrcorner(e_j\lrcorner(d\xi_r^{(p+1)}\wedge d\xi_r^{(s-p)}))]$$
	$$+\frac{(-1)^{n^2}}{s+1} 
	\sum_{k=1}^n\ \  \sum_{s_1+\dots+s_n=s-1} (s_k+1)$$
	$$ R_{j,\, i}[\star\, (d\xi_1^{(s_1)}\wedge\\
	\ \ \ \dots\wedge d\xi_{k-1}^{(s_{k-1})}\wedge d\xi_k^{(s_k+1)}\wedge d\xi_{k+1}^{(s_{k+1})}\wedge\dots\wedge d\xi_n^{(s_n)})],$$ and then	
	$$\frac{\partial\xi_j^{(s+1)}}{\partial x_i}=\sum_{l<j} \frac{-1}{s+1}\sum_{p=0}^{s-1} (p+1)\, \sum_{r=1}^n R_{l,\, i}[\frac{\partial\xi_r^{(p+1)}}{\partial x_l}\, \frac{\partial\xi_r^{(s-p)}}{\partial x_j}-\frac{\partial\xi_r^{(p+1)}}{\partial x_j}\, \frac{\partial\xi_r^{(s-p)}}{\partial x_l}]$$
	$$-\sum_{l>j} \frac{-1}{s+1}\sum_{p=0}^{s-1} (p+1)\, \sum_{r=1}^n R_{l,\, i}[\frac{\partial\xi_r^{(p+1)}}{\partial x_j}\, \frac{\partial\xi_r^{(s-p)}}{\partial x_l}-\frac{\partial\xi_r^{(p+1)}}{\partial x_l}\, \frac{\partial\xi_r^{(s-p)}}{\partial x_j}]$$	
	$$+\frac{(-1)^{n^2}}{s+1} 
	\sum_{k=1}^n\ \  \sum_{s_1+\dots+s_n=s-1} (s_k+1)\, \sum_{\sigma\in S_n}(-1)^{\epsilon(\sigma)}$$
	$$ R_{j,\, i}[\frac{\partial\xi_1^{(s_1)}}{\partial x_{\sigma(1)}}\, \dots\, 
	\frac{\partial\xi_{k-1}^{(s_{k-1})}}{\partial x_{\sigma(k-1)}}\, \frac{\partial\xi_k^{(s_k+1)}}{\partial x_{\sigma(k)}}\, \frac{\partial\xi_{k+1}^{(s_{k+1})}}{\partial x_{\sigma(k+1)}}\,\dots\, \frac{\partial\xi_n^{(s_n)}}{\partial x_{\sigma(n)}}].$$

	\subsubsection{Extension of the recurrence to the boundary.} Now we consider the densities of the derivatives of the functions $\xi_j^{(s)}$. Since the operator $\Theta$ defined in Lemma \ref{dens} is linear, we have $$\Theta(\frac{\partial\xi^{(s)}_j}{\partial x_l})=\theta^{(s)}_{j,\, l}=\chi_\Omega\, \theta_{j,\, l}^{(s)}+\chi_{\R^n\setminus\bar\Omega}\, \theta_{j,\, l}^{(s)}+\chi_{\partial\Omega}\, \theta_{j,\, l}^{(s)}=\phi_{j,\, l}^{(s)}+\psi_{j,\, l}^{(s)}+\i_{j,\, l}^{(s)}.$$  
	
	Then using jump formula for the Riesz transform (see Appendix, Theorem \ref{jump}) and proceeding as in \cite{BurMat} we have $$\theta^{(1)}_{j,\, l}=\Theta(R_{j,\, l}[\rho])=\phi_{j,\, l}^{(1)}+\psi_{j,\, l}^{(1)}+\i_{j,\, l}^{(1)}.$$ And for $s>0$ we have $$\theta^{(s+1)}_{j,\, i}=\phi_{j,\, i}^{(s+1)}+\psi_{j,\, i}^{(s+1)}+\i_{j,\, i}^{(s+1)},$$ and defining $\Phi_{l,\, i}[\ \ ]=\chi_\Omega\, R_{l,\, i}[\ \ ]$, $\Psi_{l,\, i}[\ \ ]=\chi_{\R^n\setminus\bar\Omega}\, R_{l,\, i}[\ \ ]$ and $\Gamma_{l,\, i}[\ \ ]=\chi_{\partial\Omega}\, \Theta(R_{l,\, i}[\ \ ],\, )$ and taking in account the cancellations due to the supports we have
	$$\phi_{j,\, i}^{(s+1)}=\sum_{l\neq j} \frac{-1}{s+1}\sum_{p=0}^{s-1} (p+1)\, \sum_{r=1}^n \{\Phi_{l,\, i}[\phi^{(p+1)}_{r,\, l}\, \phi^{(s-p)}_{r,\, j}-\phi^{(p+1)}_{r,\, j}\, \phi^{(s-p)}_{r,\, l}]$$
	$$+\Phi_{l,\, i}[\psi^{(p+1)}_{r,\, l}\, \psi^{(s-p)}_{r,\, j}-\psi^{(p+1)}_{r,\, j}\, \psi^{(s-p)}_{r,\, l}]\}$$
	$$+\frac{(-1)^{n^2}}{s+1} 
	\sum_{k=1}^n\ \  \sum_{s_1+\dots+s_n=s-1} (s_k+1)\, \sum_{\sigma\in S_n}(-1)^{\epsilon(\sigma)}$$
	$$\{\Phi_{j,\, i}[\phi^{(s_1)}_{1,\, \sigma(1)}\, \dots\, 
	\phi^{(s_{k-1})}_{k-1,\, \sigma(k-1)}\, \phi^{(s_k+1)}_{k,\, \sigma(k)}\, \phi^{(s_{k+1})}_{k+1,\, \sigma(k+1)}\,\dots\, \phi^{(s_n)}_{n,\, \sigma(n)}]$$
	$$+\Phi_{j,\, i}[\psi^{(s_1)}_{1,\, \sigma(1)}\, \dots\, 
	\psi^{(s_{k-1})}_{k-1,\, \sigma(k-1)}\, \psi^{(s_k+1)}_{k,\, \sigma(k)}\, \psi^{(s_{k+1})}_{k+1,\, \sigma(k+1)}\,\dots\, \psi^{(s_n)}_{n,\, \sigma(n)}]\}$$
		$$\psi_{j,\, i}^{(s+1)}=\sum_{l\neq j} \frac{-1}{s+1}\sum_{p=0}^{s-1} (p+1)\, \sum_{r=1}^n \{\Psi_{l,\, i}[\phi^{(p+1)}_{r,\, l}\, \phi^{(s-p)}_{r,\, j}-\phi^{(p+1)}_{r,\, j}\, \phi^{(s-p)}_{r,\, l}]$$
	$$+\Psi_{l,\, i}[\psi^{(p+1)}_{r,\, l}\, \psi^{(s-p)}_{r,\, j}-\psi^{(p+1)}_{r,\, j}\, \psi^{(s-p)}_{r,\, l}]\}$$
	$$+\frac{(-1)^{n^2}}{s+1} 
	\sum_{k=1}^n\ \  \sum_{s_1+\dots+s_n=s-1} (s_k+1)\, \sum_{\sigma\in S_n}(-1)^{\epsilon(\sigma)}$$
	$$\{\Psi_{j,\, i}[\phi^{(s_1)}_{1,\, \sigma(1)}\, \dots\, 
	\phi^{(s_{k-1})}_{k-1,\, \sigma(k-1)}\, \phi^{(s_k+1)}_{j,\, \sigma(k)}\, \phi^{(s_{k+1})}_{k+1,\, \sigma(k+1)}\,\dots\, \phi^{(s_n)}_{n,\, \sigma(n)}]$$
	$$+\Psi_{j,\, i}[\psi^{(s_1)}_{1,\, \sigma(1)}\, \dots\, 
	\psi^{(s_{k-1})}_{k-1,\, \sigma(k-1)}\, \psi^{(s_k+1)}_{j,\, \sigma(k)}\, \psi^{(s_{k+1})}_{k+1,\, \sigma(k+1)}\,\dots\, \psi^{(s_n)}_{n,\, \sigma(n)}]\}$$
	$$\i_{j,\, i}^{(s+1)}=\sum_{l\neq j} \frac{-1}{s+1}\sum_{p=0}^{s-1} (p+1)\, \sum_{r=1}^n \Gamma_{l,\, i}[\frac{\partial\xi_r^{(p+1)}}{\partial x_l}\, \frac{\partial\xi_r^{(s-p)}}{\partial x_j}-\frac{\partial\xi_r^{(p+1)}}{\partial x_j}\, \frac{\partial\xi_r^{(s-p)}}{\partial x_l}]$$
	$$+\frac{(-1)^{n^2}}{s+1} 
	\sum_{k=1}^n\ \  \sum_{s_1+\dots+s_n=s-1} (s_k+1)\, \sum_{\sigma\in S_n}(-1)^{\epsilon(\sigma)}$$
	$$\Gamma_{j,\, i}[\frac{\partial\xi_1^{(s_1)}}{\partial x_{\sigma(1)}}\, \dots\, 
	\frac{\partial\xi_{k-1}^{(s_{k-1})}}{\partial x_{\sigma(k-1)}}\, \frac{\partial\xi_k^{(s_k+1)}}{\partial x_{\sigma(k)}}\, \frac{\partial\xi_{k+1}^{(s_{k+1})}}{\partial x_{\sigma(k+1)}}\,\dots\, \frac{\partial\xi_n^{(s_n)}}{\partial x_{\sigma(n)}}].$$

	\subsection{Estimates} The next step consists in giving estimates of the terms above using Theorem \ref{thmR} (of section 5) as a fundamental tool. More precisely the norm $$\|\ \|=\|\ \|_{L^2}+\|\ \|_{L^\infty}+ \|\ \|_{\ka{1,\, \gamma}(\Omega)}+\|\ \|_{\ka{1,\, \gamma}(\Omega^c)}$$ is multiplicative and makes continuous the action of the involved linear operators $\Phi_{i,\, j}$, $\Psi_{i,\, j}$ and $\Gamma_{i,\, j}$.
	
	We have $\|\phi_{j,\, l}^{(1)}\|=\|\chi_\Omega\, R_{j,\, l}[\rho]\|,$ $\|\psi_{j,\, l}^{(1)}\|=\|\chi_{\R^n\setminus\Omega}\, R_{j,\, l}[\rho]\|$ and $\|\i_{j,\, l}^{(1)}\|=\|\chi_\Omega\, R_{j,\, l}[\rho]\|$ and for $s>0$ we have  $$\|\phi_{j,\, i}^{(s+1)}\|\le\sum_{l\neq j} \sum_{p=0}^{s-1} \frac{p+1}{s+1}\, \sum_{r=1}^n \|\Phi_{l,\, i}\|\, \{ \|\phi^{(p+1)}_{r,\, l}\|\, \|\phi^{(s-p)}_{r,\, j}\|+\|\phi^{(p+1)}_{r,\, j}\|\, \|\phi^{(s-p)}_{r,\, l}\|$$
	$$+\|\psi^{(p+1)}_{r,\, l}\|\, \|\psi^{(s-p)}_{r,\, j}\|+\|\psi^{(p+1)}_{r,\, j}\|\, \|\psi^{(s-p)}_{r,\, l}\|\}$$
	$$+ 
	\sum_{k=1}^n\ \  \sum_{s_1+\dots+s_n=s-1} \frac{s_k+1}{s+1}\, \sum_{\sigma\in S_n}$$
	$$\|\Phi_{j,\, i}\|\, \{\|\phi^{(s_1)}_{1,\, \sigma(1)}\|\, \dots\, 
	\|\phi^{(s_{k-1})}_{k-1,\, \sigma(k-1)}\|\, \|\phi^{(s_k+1)}_{k,\, \sigma(k)}\|\, \|\phi^{(s_{k+1})}_{k+1,\, \sigma(k+1)}\|\,\dots\, \|\phi^{(s_n)}_{n,\, \sigma(n)}\|$$
	$$+\|\psi^{(s_1)}_{1,\, \sigma(1)}\|\, \dots\, 
	\|\psi^{(s_{k-1})}_{k-1,\, \sigma(k-1)}\|\, \|\psi^{(s_k+1)}_{k,\, \sigma(k)}\|\, \|\psi^{(s_{k+1})}_{k+1,\, \sigma(k+1)}\|\,\dots\, \|\psi^{(s_n)}_{n,\, \sigma(n)}]\|\}$$	
	$$\|\psi_{j,\, i}^{(s+1)}\|\le\sum_{l\neq j} \sum_{p=0}^{s-1} \frac{p+1}{s+1}\, \sum_{r=1}^n \|\Psi_{l,\, i}\|\, \{\|\phi^{(p+1)}_{r,\, l}\|\, \|\phi^{(s-p)}_{r,\, j}\|+\|\phi^{(p+1)}_{r,\, j}\|\, \|\phi^{(s-p)}_{r,\, l}]\|$$
	$$+\|\psi^{(p+1)}_{r,\, l}\|\, \|\psi^{(s-p)}_{r,\, j}\|+\|\psi^{(p+1)}_{r,\, j}\|\, \|\psi^{(s-p)}_{r,\, l}\|\}$$
	$$+
	\sum_{k=1}^n\ \  \sum_{s_1+\dots+s_n=s-1} \frac{s_k+1}{s+1} \, \sum_{\sigma\in S_n}$$
	$$\|\Psi_{j,\, i}\|\, \{\|\phi^{(s_1)}_{1,\, \sigma(1)}\|\, \dots\, 
	\|\phi^{(s_{k-1})}_{k-1,\, \sigma(k-1)}\|\, \|\phi^{(s_k+1)}_{k,\, \sigma(k)}\|\, \|\phi^{(s_{k+1})}_{k+1,\, \sigma(k+1)}\|\,\dots\, \|\phi^{(s_n)}_{n,\, \sigma(n)}\|$$
	$$+\|\psi^{(s_1)}_{1,\, \sigma(1)}\|\, \dots\, 
	\|\psi^{(s_{k-1})}_{k-1,\, \sigma(k-1)}\|\, \|\psi^{(s_k+1)}_{k,\, \sigma(k)}\|\, \|\psi^{(s_{k+1})}_{k+1,\, \sigma(k+1)}\|\,\dots\, \|\psi^{(s_n)}_{n,\, \sigma(n)}\|\}$$
	$$\|\i_{j,\, i}^{(s+1)}\|\le\sum_{l\neq j} \sum_{p=0}^{s-1} \frac{p+1}{s+1}\, \sum_{r=1}^n \|\Gamma_{l,\, i}[\frac{\partial\xi_r^{(p+1)}}{\partial x_l}\, \frac{\xi_r^{(s-p)}}{\partial x_j}]\|+\|\Gamma_{l,\, i}[\frac{\partial\xi_r^{(p+1)}}{\partial x_j}\, \frac{\xi_r^{(s-p)}}{\partial x_l}]\|$$
	$$+ 
	\sum_{k=1}^n\ \  \sum_{s_1+\dots+s_n=s-1} \frac{s_k+1}{s+1}\, \sum_{\sigma\in S_n}\|\Gamma_{j,\, i}[\frac{\partial\xi_1^{(s_1)}}{\partial x_{\sigma(1)}}\, \dots\, 
	\frac{\partial\xi_{k-1}^{(s_{k-1})}}{\partial x_{\sigma(k-1)}}\, \frac{\partial\xi_k^{(s_k+1)}}{\partial x_{\sigma(k)}}\, \frac{\partial\xi_{k+1}^{(s_{k+1})}}{\partial x_{\sigma(k+1)}}\,\dots\, \frac{\partial\xi_n^{(s_n)}}{\partial x_{\sigma(n)}}]\|.$$

	The last term is useful in the determination of the boundary values.
	
	Now, for the first two inequalities, let us denote 
	$\alpha_{a,\, b}^{(s)}=\|\phi_{a,\, b}^{(s)}\|$,  
	$\beta_{a,\, b}^{(s)}=\|\psi_{a,\, b}^{(s)}\|$, $K_{l,\, i}=\|\Phi_{l,\, i}\|$ and $L_{l,\, i}=\|\Psi_{l,\, i}\|$. We have 
	$$(s+1)\, \alpha_{j,\, i}^{(s+1)}\le\sum_{l\neq j}K_{l,\, i}\, \sum_{r=1}^n\, \sum_{p=0}^{s-1} (p+1)$$
	$$\{ \alpha^{(p+1)}_{r,\, l}\, \alpha^{(s-p)}_{r,\, j}+\alpha^{(p+1)}_{r,\, j}\, \alpha^{(s-p)}_{r,\, l}
	+\beta^{(p+1)}_{r,\, l}\, \beta^{(s-p)}_{r,\, j}+\beta^{(p+1)}_{r,\, j}\, \beta^{(s-p)}_{r,\, l}\}$$
	$$+ 
	\sum_{k=1}^n\ \  \sum_{s_1+\dots+s_n=s-1}( s_k+1)\, \sum_{\sigma\in S_n}
	K_{j,\, i}$$ $$
	\{\alpha^{(s_1)}_{1,\, \sigma(1)}\, \dots\, 
	\alpha^{(s_{k-1})}_{k-1,\, \sigma(k-1)}\, \alpha^{(s_k+1)}_{k,\, \sigma(k)}\, \alpha^{(s_{k+1})}_{k+1,\, \sigma(k+1)}\,\dots\, \alpha^{(s_n)}_{n,\, \sigma(n)}$$
	$$+\beta^{(s_1)}_{1,\, \sigma(1)}\, \dots\, 
	\beta^{(s_{k-1})}_{k-1,\, \sigma(k-1)}\, \beta^{(s_k+1)}_{k,\, \sigma(k)}\, \beta^{(s_{k+1})}_{k+1,\, \sigma(k+1)}\, \dots\, \beta^{(s_n)}_{n,\, \sigma(n)}\}$$
		$$(s+1)\, \beta_{j,\, i}^{(s+1)}\le\sum_{l\neq j} \sum_{p=0}^{s-1} (p+1)\, \sum_{r=1}^n L_{l,\, i}\, \{\alpha^{(p+1)}_{r,\, l}\, \alpha^{(s-p)}_{r,\, j}+\alpha^{(p+1)}_{r,\, j}\, \alpha^{(s-p)}_{r,\, l}$$
	$$+\beta^{(p+1)}_{r,\, l}\, \beta^{(s-p)}_{r,\, j}+\beta^{(p+1)}_{r,\, j}\, \beta^{(s-p)}_{r,\, l}\}$$
	$$+
	\sum_{k=1}^n\ \  \sum_{s_1+\dots+s_n=s-1} (s_k+1) \, \sum_{\sigma\in S_n}$$
	$$L_{j,\, i}\, \{\alpha^{(s_1)}_{1,\, \sigma(1)}\, \dots\, 
	\alpha^{(s_{k-1})}_{k-1,\, \sigma(k-1)}\, \alpha^{(s_k+1)}_{k,\, \sigma(k)}\, \alpha^{(s_{k+1})}_{k+1,\, \sigma(k+1)}\,\dots\, \alpha^{(s_n)}_{n,\, \sigma(n)}$$
	$$+\beta^{(s_1)}_{1,\, \sigma(1)}\, \dots\, 
	\beta^{(s_{k-1})}_{k-1,\, \sigma(k-1)}\, \beta^{(s_k+1)}_{k,\, \sigma(k)}\, \beta^{(s_{k+1})}_{k+1,\, \sigma(k+1)}\,\dots\, \beta^{(s_n)}_{n,\, \sigma(n)}\}.$$
	
	Let us define also $$A_{r,\, l,\, j}^{(s)}=\sum_{p=0}^{s-1} (p+1)\alpha^{(p+1)}_{r,\, l}\, \alpha^{(s-p)}_{r,\, j},$$
	$$B_{r,\, l,\, j}^{(s)}=\sum_{p=0}^{s-1} (p+1)\beta^{(p+1)}_{r,\, l}\, \beta^{(s-p)}_{r,\, j},$$
	$$C_{k,\, \sigma}^{(s)}=\sum_{s_1+\dots+s_n=s-1}( s_k+1)$$ 
	$$
	\alpha^{(s_1)}_{1,\, \sigma(1)}\, \dots\, 
	\alpha^{(s_{k-1})}_{k-1,\, \sigma(k-1)}\, \alpha^{(s_k+1)}_{k,\, \sigma(k)}\, \alpha^{(s_{k+1})}_{k+1,\, \sigma(k+1)}\,\dots\, \alpha^{(s_n)}_{n,\, \sigma(n)}$$
	$$D_{k,\, \sigma}^{(s)}=\sum_{s_1+\dots+s_n=s-1}( s_k+1)$$ 
	$$+\beta^{(s_1)}_{1,\, \sigma(1)}\, \dots\, 
	\beta^{(s_{k-1})}_{k-1,\, \sigma(k-1)}\, \beta^{(s_k+1)}_{k,\, \sigma(k)}\, \beta^{(s_{k+1})}_{k+1,\, \sigma(k+1)}\, \dots\, \beta^{(s_n)}_{n,\, \sigma(n)}.$$ Then
	$$(s+1)\, \alpha_{j,\, i}^{(s+1)}\le\sum_{l\neq j}K_{l,\, i}\, \sum_{r=1}^n\, \{A_{r,\, l,\, j}^{(s)}+A_{r,\, j,\, l}^{(s)}
	+B_{r,\, l,\, j}^{(s)}+B_{r,\, j,\, l}^{(s)}\}$$
	$$+ 
	\sum_{k=1}^n\, \sum_{\sigma\in S_n}
	K_{j,\, i}
	\{C_{k,\, \sigma}^{(s)}+D_{k,\, \sigma}^{(s)}\}
	$$
	$$(s+1)\, \beta_{j,\, i}^{(s+1)}\le\sum_{l\neq j}L_{l,\, i}\, \sum_{r=1}^n\, \{A_{r,\, l,\, j}^{(s)}+A_{r,\, j,\, l}^{(s)}
	+B_{r,\, l,\, j}^{(s)}+B_{r,\, j,\, l}^{(s)}\}$$
	$$+ 
	\sum_{k=1}^n\, \sum_{\sigma\in S_n}
	L_{j,\, i}\, 
	\{C_{k,\, \sigma}^{(s)}+D_{k,\, \sigma}^{(s)}\}
	.$$
	
	\medskip
	
	Next, after defining $\alpha_{j,\, i}^{(0)}=\beta_{j,\, i}^{(0)}=0$ for all $i,\, j$, we consider, \newline for $\tau\in[0,\, \infty)$, the functions $$f_{j,\, i}^{(N)}(\tau)=\sum_{s=0}^N \alpha_{j,\, i}^{(s)}\, \tau^s$$
	and $$g_{j,\, i}^{(N)}(\tau)=\sum_{s=0}^N \beta_{j,\, i}^{(s)}\, \tau^s.$$ Then 
	$$(f_{j,\, i}^{(N+1)})'(\tau)=\sum_{s=0}^N (s+1)\, \alpha_{j,\, i}^{(s+1)}\, \tau^s=\alpha_{j,\, i}^{(1)}+\sum_{s=1}^N (s+1)\, \alpha_{j,\, i}^{(s+1)}\, \tau^s$$
	$$\le\alpha_{j,\, i}^{(1)}+\sum_{l\neq j}K_{l,\, i}\, \sum_{r=1}^n\, \sum_{s=0}^N\{A_{r,\, l,\, j}^{(s)}+A_{r,\, j,\, l}^{(s)}
	+B_{r,\, l,\, j}^{(s)}+B_{r,\, j,\, l}^{(s)}\}\, \tau^s$$
	$$+ \sum_{k=1}^n\, \sum_{\sigma\in S_n}
	K_{j,\, i}\, \sum_{s=0}^N
	\{C_{k,\, \sigma}^{(s)}+D_{k,\, \sigma}^{(s)}
	\}\, \tau^s$$
	and $$(g_{j,\, i}^{(N+1)})'(\tau)=\sum_{s=0}^N (s+1)\, \beta_{j,\, i}^{(s+1)}\, \tau^s$$ 
	$$\le\beta_{j,\, i}^{(1)}+\sum_{l\neq j}L_{l,\, i}\, \sum_{r=1}^n\, \sum_{s=0}^N\{A_{r,\, l,\, j}^{(s)}+A_{r,\, j,\, l}^{(s)}
	+B_{r,\, l,\, j}^{(s)}+B_{r,\, j,\, l}^{(s)}\}\, \tau^s$$
	$$+ \sum_{k=1}^n\, \sum_{\sigma\in S_n}
	L_{j,\, i}\, \sum_{s=0}^N
	\{C_{k,\, \sigma}^{(s)}+D_{k,\, \sigma}^{(s)}
	\}\, \tau^s.$$
	
	\medskip
	
	Now, \begin{lem} Using the notation above the following inequalities are satisfied:
		
		\begin{itemize} 
			
			\item[a)] $\sum_{s=0}^NA_{r,\, l,\, j}^{(s)}\, \tau^s\le(f_{r,\, l}^{(N)})'(\tau)\, f_{r,\, j}^{(N)}(\tau)$
			
			\item[b)] $\sum_{s=0}^NB_{r,\, l,\, j}^{(s)}\, \tau^s\le(g_{r,\, l}^{(N)})'(\tau)\, g_{r,\, j}^{(N)}(\tau)$
			
			\item[c)] $\sum_{s=0}^NC_{k,\, \sigma}^{(s)}\, \tau^s\le (f_{k,\, \sigma(k)}^{(N)})'(\tau)\, \Pi_{l\neq k}f_{l,\, \sigma(l)}^{(N)}(\tau)$
			
			\item[d)] $\sum_{s=0}^ND_{k,\, \sigma}^{(s)}\, \tau^s\le(g_{k,\, \sigma(k)}^{(N)})'(\tau)\, \Pi_{l\neq k}g_{l,\, \sigma(l)}^{(N)}(\tau)$
		\end{itemize}
		
	\end{lem}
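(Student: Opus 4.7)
My plan is to prove each of (a)--(d) by a direct coefficient-by-coefficient expansion of the right-hand sides as Cauchy products of the truncated power series, identifying the resulting coefficient of $\tau^s$ with the inner sum that defines $A_{r,l,j}^{(s)}$, $B_{r,l,j}^{(s)}$, $C_{k,\sigma}^{(s)}$, and $D_{k,\sigma}^{(s)}$, and then using nonnegativity of every $\alpha_{a,b}^{(s)}, \beta_{a,b}^{(s)}$ together with the convention $\alpha_{a,b}^{(0)}=\beta_{a,b}^{(0)}=0$ to conclude.

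For (a) I would differentiate termwise to get $(f_{r,l}^{(N)})'(\tau)=\sum_{q=1}^{N}q\,\alpha_{r,l}^{(q)}\tau^{q-1}$ and then multiply, obtaining
\begin{equation*}
(f_{r,l}^{(N)})'(\tau)\,f_{r,j}^{(N)}(\tau)=\sum_{q=1}^{N}\sum_{m=0}^{N}q\,\alpha_{r,l}^{(q)}\alpha_{r,j}^{(m)}\tau^{q+m-1}.
\end{equation*}
The change of variables $p=q-1$, $s=p+m$ turns the generic summand into $(p+1)\alpha_{r,l}^{(p+1)}\alpha_{r,j}^{(s-p)}\tau^{s}$; the convention $\alpha_{r,j}^{(0)}=0$ kills the $s-p=0$ terms, so for each $s\in\{0,\dots,N\}$ the inner sum over $p\in\{0,\dots,s-1\}$ is exactly $A_{r,l,j}^{(s)}$, and the further nonnegative terms appearing on the right (of total $\tau$-degree up to $2N-1$) can only strengthen the inequality. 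Part (b) will follow by the identical argument with $\beta, g$ in place of $\alpha, f$.

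For (c) I would expand the $n$-fold Cauchy product
\begin{equation*}
(f_{k,\sigma(k)}^{(N)})'(\tau)\prod_{l\neq k}f_{l,\sigma(l)}^{(N)}(\tau)=\sum q_k\,\prod_{l=1}^{n}\alpha_{l,\sigma(l)}^{(q_l)}\,\tau^{q_k-1+\sum_{l\neq k}q_l},
\end{equation*}
the sum running over $q_k\in\{1,\dots,N\}$ and $q_l\in\{0,\dots,N\}$ for $l\neq k$, with the convention $\alpha^{(0)}=0$ effectively forcing $q_l\ge 1$ for every $l$. Substituting $s_k=q_k-1$ and $s_l=q_l$ for $l\neq k$ rewrites the generic term as $(s_k+1)\alpha_{k,\sigma(k)}^{(s_k+1)}\prod_{l\neq k}\alpha_{l,\sigma(l)}^{(s_l)}\tau^{\sum_l s_l}$, which, regrouped by the value of $\sum_l s_l$, identifies the coefficient of $\tau^{s-1}$ on the right with the inner summand defining $C_{k,\sigma}^{(s)}$. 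This yields
\begin{equation*}
(f_{k,\sigma(k)}^{(N)})'(\tau)\prod_{l\neq k}f_{l,\sigma(l)}^{(N)}(\tau)\;\ge\;\sum_{s=1}^{N}C_{k,\sigma}^{(s)}\tau^{s-1},
\end{equation*}
the discarded terms (arising from $\sum_l s_l\ge N$) being all nonnegative. Since the subsequent analyticity estimates are applied for $\tau$ in a small neighborhood of the origin, in particular $\tau\le 1$, the inequality $\tau^{s-1}\ge\tau^s$ together with $C_{k,\sigma}^{(0)}=0$ delivers (c). Part (d) will follow by the same computation with $\beta, g$ replacing $\alpha, f$.

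The main obstacle is the multi-index bookkeeping for (c) and (d): the derivative of $f_{k,\sigma(k)}^{(N)}$ shifts the $\tau$-degree by one relative to what the defining formula for $C_{k,\sigma}^{(s)}$ would naively suggest, and the truncation ranges $\sum_l q_l\le N$ versus $q_l\le N$ individually do not coincide on the two sides. Both points are resolved by the convention $\alpha_{a,b}^{(0)}=\beta_{a,b}^{(0)}=0$, which removes the boundary of the index range, and by the nonnegativity of every quantity appearing, which converts each truncation mismatch into a nonnegative extra term on the right-hand side, strengthening the inequality in the desired direction.
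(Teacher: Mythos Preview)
Your approach is the same as the paper's: expand the right-hand side as a Cauchy product of the truncated power series, regroup by total degree in~$\tau$, and compare coefficients using nonnegativity of all the $\alpha$'s and $\beta$'s together with the convention $\alpha^{(0)}=\beta^{(0)}=0$. The paper only spells out case (c) and refers the remaining cases to the same mechanism; your treatment of (a) and (b) is correct and identical in spirit.

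For (c) and (d) you are in fact more careful than the paper. You correctly observe that, with the paper's definition $C_{k,\sigma}^{(s)}=\sum_{s_1+\cdots+s_n=s-1}(\cdots)$, the product expansion naturally yields $\sum_{s}C_{k,\sigma}^{(s)}\tau^{s-1}$ rather than $\sum_{s}C_{k,\sigma}^{(s)}\tau^{s}$, and you close the gap by invoking $\tau\le 1$. The paper's displayed proof passes in a single line from the $\tau^s$-coefficient (a sum over $p_1+\cdots+p_n=s$) to $C_{k,\sigma}^{(s)}$ (a sum over $p_1+\cdots+p_n=s-1$) as if by nonnegativity, but these are sums over disjoint index sets and the step is not justified as written; for instance, with $n=2$, $N=2$, all $\alpha^{(1)}=a>0$ and all $\alpha^{(2)}=0$, the left side of (c) is $a^2\tau^2$ while the right side is $a^2\tau$, so (c) fails for $\tau>1$. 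Your restriction to $\tau\le 1$ is therefore genuinely needed (given the stated definition of $C_{k,\sigma}^{(s)}$) and is harmless for the application, since the subsequent proposition only uses $\tau$ in a small neighbourhood of~$0$; just state it as an explicit hypothesis rather than introducing it mid-proof.
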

	\begin{proof} It is based on the argument used in pg. 22 of \cite{BurMat} and we only develop the third case:
		$$f_{1,\, \sigma(1)}^{(N)}(\tau)\, \dots\, f_{k-1,\, \sigma(k-1)}^{(N)}(\tau)\, (f_{k,\, \sigma(k)}^{(N)})'(\tau)\, f_{k+1,\, \sigma(k+1)}^{(N)}(\tau)\, \dots f_{n,\, \sigma(n)}^{(N)}(\tau)$$
		$$=(\sum_{p_1=0}^N \alpha_{1,\, \sigma(1)}^{(p_1)}\, \tau^{p_1})\, \dots\, (\sum_{p_k=0}^{N-1} (p_k+1)\, \alpha_{k,\, \sigma(k)}^{(p_k+1)}\, \tau^{p_k})\, \dots\, (\sum_{p_n=0}^N \alpha_{n,\, \sigma(n)}^{(p_n)}\, \tau^{p_n})$$
		$$=\sum_{p_1=0}^N\, \dots\, \sum_{p_k=0}^{N-1}\, \dots\, \sum_{p_n=0}^N  (p_k+1)\, \alpha_{1,\, \sigma(1)}^{(p_1)}\, \dots\, \alpha_{k,\, \sigma(k)}^{(p_k+1)}\, \tau^{p_k}\, \dots\, \alpha_{n,\, \sigma(n)}^{(p_n)}\, \tau^{p_1+\dots+p_n}$$
		$$=\sum_{s=0}^{n\, N-1}\, \left(\sum_{p_k=0}^{\min\{s,\, N-1\}} (p_k+1)\, \alpha_{k,\, \sigma(k)}^{(p_k+1)}\,  \,\sum_{\sum_{l\neq k}p_l=s-p_k;\ p_l\le N}\alpha_{1,\, \sigma(1)}^{(p_1)}\,  \dots\, \widehat{\alpha_{k,\, \sigma(k)}^{(p_k)}}\, \dots\, \alpha_{n,\, \sigma(n)}^{(p_n)}\, \right)\tau^s$$
		$$\geq\sum_{s=0}^{N}\, \left(\sum_{p_1+\dots+p_n=s-1} (p_k+1)\,  \alpha_{1,\, \sigma(1)}^{(p_1)}\,  \dots\, \alpha_{k,\, \sigma(k)}^{(p_k+1)}\, \dots\, \alpha_{n,\, \sigma(n)}^{(p_n)}\, \right)\tau^s=\sum_{s=0}^{N}C_{k,\, \sigma}^{(s)}\, \tau^s$$

	\end{proof}

	So $$(f_{j,\, i}^{(N+1)})'(\tau)\le\alpha_{j,\, i}^{(1)}$$
	$$+\sum_{l\neq j}K_{l,\, i}\, \sum_{r=1}^n\, \{(f_{r,\, l}^{(N)})'(\tau)\, f_{r,\, j}^{(N)}(\tau)+(f_{r,\, j}^{(N)})'(\tau)\, f_{r,\, l}^{(N)}(\tau)$$
	$$+(g_{r,\, l}^{(N)})'(\tau)\, g_{r,\, j}^{(N)}(\tau)+(g_{r,\, j}^{(N)})'(\tau)\, g_{r,\, l}^{(N)}(\tau)\}$$
	$$+ \sum_{k=1}^n\, \sum_{\sigma\in S_n}
	K_{j,\, i}\, 
	\{(f_{k,\, \sigma(k)}^{(N)})'(\tau)\, \Pi_{l\neq k}f_{l,\, \sigma(l)}^{(N)}(\tau)+(g_{k,\, \sigma(k)}^{(N)})'(\tau)\, \Pi_{l\neq k}g_{l,\, \sigma(l)}^{(N)}(\tau)
	\}$$
	$$=\alpha_{j,\, i}^{(1)}+\sum_{l\neq j}K_{l,\, i}\, \sum_{r=1}^n\, (f_{r,\, l}^{(N)}\, f_{r,\, j}^{(N)}+g_{r,\, l}^{(N)}\, g_{r,\, j}^{(N)})'(\tau)$$
	$$+ 
	K_{j,\, i}\, \sum_{\sigma\in S_n} (\Pi_{k=1}^n f_{l,\, \sigma(l)}^{(N)}
	+\Pi_{k=1}^n g_{l,\, \sigma(l)}^{(N)})'(\tau)
	$$
	and 
	$$(g_{j,\, i}^{(N+1)})'(\tau)\le\beta_{j,\, i}^{(1)}+\sum_{l\neq j}L_{l,\, i}\, \sum_{r=1}^n\, \{(f_{r,\, l}^{(N)})'(\tau)\, f_{r,\, j}^{(N)}(\tau)+(f_{r,\, j}^{(N)})'(\tau)\, f_{r,\, l}^{(N)}(\tau)$$
	$$+(g_{r,\, l}^{(N)})'(\tau)\, g_{r,\, j}^{(N)}(\tau)+(g_{r,\, j}^{(N)})'(\tau)\, g_{r,\, l}^{(N)}(\tau)\}$$
	$$+ \sum_{k=1}^n\, \sum_{\sigma\in S_n}
	L_{j,\, i}\, \sum_{s=0}^N
	\{(f_{k,\, \sigma(k)}^{(N)})'(\tau)\, \Pi_{l\neq k}f_{l,\, \sigma(l)}^{(N)}(\tau)+(g_{k,\, \sigma(k)}^{(N)})'(\tau)\, \Pi_{l\neq k}g_{l,\, \sigma(l)}^{(N)}(\tau)
	\}$$
	$$=\beta_{j,\, i}^{(1)}+\sum_{l\neq j}L_{l,\, i}\, \sum_{r=1}^n\, (f_{r,\, l}^{(N)}\, f_{r,\, j}^{(N)}+g_{r,\, l}^{(N)}\, g_{r,\, j}^{(N)})'(\tau)$$
	$$+ 
	L_{j,\, i}\, \sum_{\sigma\in S_n} (\Pi_{k=1}^n f_{l,\, \sigma(l)}^{(N)}
	+\Pi_{k=1}^n g_{l,\, \sigma(l)}^{(N)})'(\tau)
	$$
	
	Finally, defining $h_{j,\, i}^{(N+1)}=f_{j,\, i}^{(N+1)}+g_{j,\, i}^{(N+1)}$ and adding the two expressions above, we have $$(h_{j,\, i}^{(N+1)})'(\tau)\le\alpha_{j,\, i}^{(1)}+\beta_{j,\, i}^{(1)}$$
	$$+\sum_{l\neq j}(K_{l,\, i}+L_{l,\, i})\, 2\, \sum_{r=1}^n\, (h_{r,\, l}^{(N)}\, h_{r,\, j}^{(N)})'(\tau)$$
	$$+2\, (K_{j,\, i}+ 
	L_{j,\, i})\, \sum_{\sigma\in S_n} (\Pi_{k=1}^n h_{l,\, \sigma(l)}^{(N)}
	)'(\tau),
	$$ so, since all the terms are positive, the primitive functions satisfy the inequalities too and since $\alpha_{j,\, i}^{(0)}=\beta_{j,\, i}^{(0)}=0$, defining $c_{j,\, i}=\alpha_{j,\, i}^{(1)}+\beta_{j,\, i}^{(1)}$ and $M_{l,\, i}=2\,(K_{l,\, i}+L_{l,\, i})$ we have
	\begin{equation}\label{desig} 
		h_{j,\, i}^{(N+1)}(\tau)\le c_{j,\, i}^{(1)}\, \tau+\sum_{l\neq j}M_{l,\, i}\, \sum_{r=1}^n\, h_{r,\, l}^{(N)}\, h_{r,\, j}^{(N)}(\tau)\\
		+M_{j,\, i}\, \sum_{\sigma\in S_n} \Pi_{k=1}^n h_{l,\, \sigma(l)}^{(N)}
		(\tau).
	\end{equation} 
	
	\begin{prop}
		For any $N\in\N\setminus\{0\}$, let $h_N$ a sequence of functions satisfying~\eqref{desig}, defining $$M=\max \{M_{l,\, i};\, l,\, i=1,\, \dots,\, n\}$$ and $$c=\max \{c_{l,\, i}^{(1)};\,l,\, i=1,\, \dots,\, n\},$$ there exists $\alpha\in (0,\, 1)$ such that for $\tau\in(0,\, \frac{(1-\alpha)^2}{8\, n\, (n-1)\, M\, c})$ we have 
		$$
		h_N(\tau)\le\frac{(1-\alpha)}{4\, \sqrt{2}\, n\, (n-1)\, M}+\frac{1}{2}\, \min\{(\frac{\alpha}{n!\, M})^{\frac{1}{n-1}},\, \frac{\frac{3}{2}\, (1-\alpha)}{4\, n\, (n-1)\, M}\}.$$
	\end{prop}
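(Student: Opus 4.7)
The plan is to argue by induction on $N$, establishing that all of the scalar quantities $h_{j,i}^{(N)}(\tau)$ are dominated by a single constant $R=A+B$ (independent of $N$) throughout the claimed range of $\tau$. The base case $N=1$ uses the trivial bound $h_{j,i}^{(1)}(\tau)\le c_{j,i}\,\tau\le c\,\tau$, which is below $R$ for any reasonable choice on the stated interval.

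Before running the induction, I would first reduce \eqref{desig} to a scalar recurrence. The first sum on the right of \eqref{desig} ranges over $n(n-1)$ index pairs (namely $l\neq j$ paired with $r=1,\dots,n$) and the symmetric-group sum has $n!$ terms, so if all $h_{j,i}^{(N)}(\tau)\le R$ then
$$h_{j,i}^{(N+1)}(\tau)\;\le\;c\,\tau\;+\;M\,n(n-1)\,R^{2}\;+\;M\,n!\,R^{n}.$$
The problem thus reduces to finding $R$ and a range of $\tau$ for which the scalar inequality $c\,\tau+M\,n(n-1)\,R^{2}+M\,n!\,R^{n}\le R$ holds.

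To recover the precise form in the statement, I would take $R=A+B$ with $A$ and $B$ as defined there and use the decomposition $R^{2}\le 2A^{2}+2B^{2}$. The factor $1/(4\sqrt{2})$ inside $A$ is exactly what is needed so that, on the interval $\tau<(1-\alpha)^{2}/(8n(n-1)Mc)$, one has $c\,\tau+2Mn(n-1)A^{2}\le A$; the second entry in the minimum defining $B$ yields $2Mn(n-1)B^{2}\le\tfrac{3(1-\alpha)}{8}B$; and the first entry amounts to the condition $Mn!(A+B)^{n-1}\le\alpha$, so that $Mn!R^{n}\le\alpha R=\alpha A+\alpha B$, which together with the inequality $A\le B$ (built into the definition of $B$) is absorbed into the remaining slack in $B$. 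Summing the three pieces delivers $h_{j,i}^{(N+1)}(\tau)\le A+B$ and closes the induction.

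The main obstacle will be verifying that the various arithmetic conditions on $\alpha$ can be met simultaneously. The first estimate forces $\alpha$ bounded away from $0$ (so that the $A$-budget can absorb the fixed $c\tau$ term), whereas the third forces $\alpha$ bounded away from $1$ (so that $\alpha A+\alpha B$ does not devour the margin $(1-\alpha)B$ available in the $B$-budget). Showing that the admissible window for $\alpha$ is a nonempty subinterval of $(0,1)$ is the genuine content of the proposition and involves only elementary inequality bookkeeping; once it is done, the induction closes automatically.
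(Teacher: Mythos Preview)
Your approach is essentially the paper's: induction on $N$, reduction to the scalar inequality $c\tau + Mn(n-1)E^2 + Mn!E^n \le E$, and then the key device of restricting to $E \le (\alpha/(n!M))^{1/(n-1)}$ so that the degree-$n$ term is absorbed as $Mn!E^n \le \alpha E$, leaving a quadratic. The paper handles that quadratic by computing its discriminant and locating its two roots, then chooses $\tau_0 = \gamma\,\frac{(1-\alpha)^2}{4n(n-1)cM}$ with $\gamma = \tfrac12$ and shows via an intermediate-value argument on the function $\varphi(\alpha)=\frac{(1-\alpha)\sqrt{1-\gamma}}{2n(n-1)M}-(\frac{\alpha}{n!M})^{1/(n-1)}$ that the window of admissible $\alpha$ is nonempty. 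Your $R=A+B$ splitting together with $R^2\le 2A^2+2B^2$ is just an alternative bookkeeping for the same quadratic analysis.

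One point to correct: the inequality $A\le B$ is \emph{not} ``built into the definition of $B$''. You do have $A\le \frac{3(1-\alpha)/2}{8n(n-1)M}$ automatically (since $\tfrac{1}{4\sqrt2}<\tfrac{3}{16}$), but $A\le \tfrac12(\alpha/(n!M))^{1/(n-1)}$ is an extra constraint on $\alpha$ that must be folded into the ``admissible window'' check you flag at the end. Likewise, your third step actually needs $A+B\le(\alpha/(n!M))^{1/(n-1)}$, which again requires $A$ to be bounded by the first entry of the $\min$, not just by $B$. These are exactly the constraints the paper's function $\varphi$ is designed to handle, so once you acknowledge them your outline goes through.
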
          
	
	\begin{proof}
		We want to prove that there exists $E>0$ and $\tau_0>0$ such that  
		$$
		h_{j,\, i}^{(N)}(\tau)\le E
		$$ for any $N\in\N\setminus\{0\}$ and for every $i,\, j=1,\, \dots,\, n$, if $0\le\tau<\tau_0$. 
		
		By the previous inequality \eqref{desig}, we have that 
		$$h_{j,\, i}^{(N+1)}(\tau)\le c_{j,\, i}^{(1)}\, \tau_0+\sum_{l\neq j}M_{l,\, i}\, \sum_{r=1}^n\, h_{r,\, l}^{(N)}\, h_{r,\, j}^{(N)}(\tau)$$
$$		+M_{j,\, i}\, \sum_{\sigma\in S_n} \Pi_{k=1}^n h_{l,\, \sigma(l)}^{(N)}
		(\tau)$$
		$$\le c_{j,\, i}^{(1)}\, \tau_0+\sum_{l\neq j}M_{l,\, i}\, \sum_{r=1}^n\, E_{r,\, l}\, E_{r,\, j}	+M_{j,\, i}\, \sum_{\sigma\in S_n} \Pi_{k=1}^n E_{l,\, \sigma(l)},$$ and assuming that the statement is true for $N$, 
		we have $$h_{j,\, i}^{(N+1)}(\tau)\le c\, \tau_0+n\, (n-1)\, M\, E^2+n!\, M\, E^n,$$ and we want $h_{j,\, i}^{(N+1)}(\tau)\le E$, so $$c\, \tau_0+n\, (n-1)\, M\, E^2+n!\, M\, E^n-E\le0.$$
		
		Then we consider the polynomial $$p(x)=x^n+\frac{1}{(n-2)!}\, x^2-\frac{1}{n!\, M}\, x+\frac{c\, \tau_0}{n!\, M}$$ for $x\geq0$.
		
		The function $p(x)$ is convex, $p(0)>0$ and $\lim_{x\to+\infty}p(x)=+\infty$. 
		
		Moreover if $0<\alpha<1$ and $0\le x\le(\frac{\alpha}{n!\, M})^{\frac{1}{n-1}}$, then $$p(x)\le\frac{1}{(n-2)!}\, x^2-\frac{1-\alpha}{n!\, M}\, x+\frac{c\, \tau_0}{n!\, M}=q(x).$$
		
		The discriminant of $q$, $$D(q)=(\frac{1-\alpha}{n!\, M})^2-4\, \frac{c\, \tau_0}{(n-2)!\, n!\, M}$$ is positive whenever $$\tau_0<\frac{(1-\alpha)^2}{4\, n\, (n-1)\, c\, M}$$ and the roots of $q$ are $$\frac{(n-2)!}{2}\, \{\frac{1-\alpha}{n!\, M}\pm\sqrt{(\frac{1-\alpha}{n!\, M})^2-4\, \frac{c\, \tau_0}{(n-2)!\, n!\, M}}\}$$
		$$=\frac{1-\alpha\pm\sqrt{(1-\alpha)^2-4\, c\, \tau_0\, n\, (n-1)\, M}}{2\, n\, (n-1)\, M}.$$
		
		In order to have room enough for $x$ to make $q(x)<0$, we need $$\frac{1-\alpha-\sqrt{(1-\alpha)^2-4\, c\, \tau_0\, n\, (n-1)\, M}}{2\, n\, (n-1)\, M}<(\frac{\alpha}{n!\, M})^{\frac{1}{n-1}}.$$ 
		
		Let us put $$\tau_0=\gamma\, \frac{(1-\alpha)^2}{4\, n\, (n-1)\, c\, M}$$ for some $\gamma\in(0,\, 1)$ and consider the function $$\varphi(\alpha)=\frac{(1-\alpha)\, \sqrt{(1-\gamma)}}{2\, n\, (n-1)\, M}-(\frac{\alpha}{n!\, M})^{\frac{1}{n-1}}.$$ It is clear that $\varphi(0)>0$ and $\varphi(1)<0$ and also, for $\alpha\in(0,\, 1)$, $$\varphi'(\alpha)=-\frac{\sqrt{(1-\gamma)}}{2\, n\, (n-1)\, M}-(\frac{1}{n!\, M})^{\frac{1}{n-1}}\, \frac{1}{(n-1)\, \alpha^\frac{n-2}{n-1}}<0,$$ so there is an interval of allowable values of $\alpha$ contained in $(0,\, 1)$ and once $\alpha$ is chosen in this interval then for any choice of $\gamma$ we have that if $\tau_0=\gamma\, \frac{(1-\alpha)^2}{4\, n\, (n-1)\, c\, M}$ and $$\frac{(1-\alpha)\, \sqrt{(1-\gamma)}}{2\, n\, (n-1)\, M}<x<(\frac{\alpha}{n!\, M})^{\frac{1}{n-1}},$$ then $p(x)<0$. We have taken $\mu=1-\alpha$ and $\gamma=\dfrac{1}{2}$ in the statement of the proposition.

	\end{proof}

	\subsection{End of the proof of Theorem \ref{T1}.}
	
	The previous proposition implies that 
	$$
	\sum_{s=0}^N\biggl\|\frac{\partial\xi_j^{(s)}}{\partial x_i}\biggr\|\, \tau^s\le h_N(\tau)\le C,
	$$ where $C=C(n,\, M)$ is the constant in the previous proposition, and
	for every $N$ and $\tau\in[0,\, \frac{1}{16\, n\, (n-1)\, M\, c})$.

	Since the functions $\frac{\partial\xi^{(s)}}{\partial  x_i}$ are in $\ka{\gamma}$ and, as the recurrence shows, they decay at $\infty$ of order $\frac{1}{\|x\|^n}$, then
	$$
	\xi_j^{(s)}= \sum_{i=1}^n K_i[\frac{\partial\xi_j^{(s)}}{\partial x_i}]
	$$ 
	is in $\ka{1,\, \gamma}$. Moreover 
	$$
	\|\xi_j^{(s)}\|_{L^\infty}\le C \, \|\nabla\xi_j^{(s)}\|_{\gamma}.
	$$
	
	
	Consequently, for any $N$ and $j$, 
	$$
	\biggl|\sum_{s=0}^N \xi_j^{(s)}(x)\, t^s\biggr|\le\sum_{s=0}^N \|\xi_j^{(s)}\|_{L^\infty}\, |t|^s\le C\, \sum_{s=0}^N 
	\|\nabla\xi_j^{(s)}\|\, |t|^s
	$$ 
	and the last term is uniformly bounded in $N$, for $|t|\le\frac{1}{16\, n\, (n-1)\, M\, c}$.
	
	This implies the existence of an analytic solution of the equations~(\ref{div1}) and~(\ref{rot1}), in a neighbourhood of $t=0$.
	
	The statements 1) and 2) are consequence of the facts that  $\phi(\R^n,\, t)$ is a closed set and $\phi(\ ,\, t)$ is one to one for every $t$. Moreover $\phi(\R^n ,\, t)=\R^n$ and $\phi(\ ,\, t)$ is differentiable in $\R^n\setminus\partial\Omega$. as being proved in 
	the planar case (Proposition 6 in \cite{BurMat}). $\square$

	\section{Estimates of the Riesz operators.} 
	
	In this section we develop the concepts and techniques leading to the estimates used in the proof of Theorem \ref{T1}. This is mainly achieved by performing  analysis of the Riesz operators in particular spaces. 
	
	\subsection{Basic geometric lemmas}
	The metric and geometric properties of $\partial\Omega$ play a crucial role in the behaviour of the Riesz transform in $\Omega$. The next lemma is a synthesis of these properties.
	
	\begin{lem}[The geometric lemma]\label{geom}
		Let $\Omega\subset\R^n$ be a bounded domain such that $\partial\Omega\in\ka{1,\gamma}$, defined by a function $\rho$.
		
		There exists $0<R_0<1$ such that if
		$$
		U_{R_0}(\partial\Omega):=\cup_{x\in\partial\Omega}B_{R_0}(x),
		$$
		then there exists $R_1$ such that for any $x_0\in U_{R_0}$ the level set $$\{\rho=\rho(x_0)\}\cap B_{R_1}(x_0)$$ coincides with the graph of a function $\varphi_{x_0}$.
		
		Moreover $\varphi_{x_0}$ is $\ka{1,\gamma}$, $\varphi_{x_0}(0)=0$ and $\nabla\varphi_{x_0}(0)=0$.
	\end{lem}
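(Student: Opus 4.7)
The plan is to use a quantitative form of the implicit function theorem applied to the defining function $\rho$ at each point of a tubular neighbourhood of $\partial\Omega$. Uniformity in $x_0$ will come from compactness of $\partial\Omega$ combined with the $\ka{1,\gamma}$ regularity of $\rho$.

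First I would note that, since $\partial\Omega\in\ka{1,\gamma}$ is compact, one may take $\rho$ to be $\ka{1,\gamma}$ on $\R^n$ with $\nabla\rho\neq 0$ on $\partial\Omega$ (for instance the signed distance function in a neighbourhood of $\partial\Omega$). By continuity of $\nabla\rho$ and compactness of $\partial\Omega$, there exist $R_0\in(0,1)$ and constants $c>0$, $L\ge 0$ such that
$$
|\nabla\rho(x)|\ge c\qquad\text{and}\qquad |\nabla\rho(x)-\nabla\rho(y)|\le L\,|x-y|^\gamma
$$
for every $x,y\in U_{R_0}(\partial\Omega)$.

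Next, I would fix $x_0\in U_{R_0}(\partial\Omega)$ and choose an orthogonal transformation $\mathcal{O}$ of $\R^n$ sending $\nabla\rho(x_0)/|\nabla\rho(x_0)|$ to $e_n$. Define $\tilde\rho(y):=\rho(x_0+\mathcal{O}^\top y)-\rho(x_0)$ in coordinates $y=(y',y_n)\in\R^{n-1}\times\R$; then $\tilde\rho(0)=0$, $\partial_{y_n}\tilde\rho(0)=|\nabla\rho(x_0)|\ge c$, and $\nabla_{y'}\tilde\rho(0)=0$. The standard implicit function theorem produces a $\ka{1,\gamma}$ function $\varphi_{x_0}$ on some ball of $\R^{n-1}$ around $0$ whose graph is $\{\tilde\rho=0\}$ near the origin. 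Differentiating the identity $\tilde\rho(y',\varphi_{x_0}(y'))\equiv 0$ at $y'=0$ and using $\nabla_{y'}\tilde\rho(0)=0$ immediately gives $\varphi_{x_0}(0)=0$ and $\nabla\varphi_{x_0}(0)=0$. Undoing $\mathcal{O}$ and the translation by $x_0$ returns the desired graph description of $\{\rho=\rho(x_0)\}\cap B_{R_1}(x_0)$.

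The only delicate point, and hence the main obstacle, is that the radius $R_1$ of this neighbourhood must be chosen independently of $x_0$. For this I would invoke a quantitative implicit function theorem: the uniform H\"older estimate above forces $\partial_{y_n}\tilde\rho(y)\ge c/2$ throughout the ball of radius $r_1:=(c/(2L))^{1/\gamma}$ around $0$, and a standard Banach fixed-point argument applied to the map $y_n\mapsto y_n-\tilde\rho(y',y_n)/\partial_{y_n}\tilde\rho(0)$ produces $\varphi_{x_0}$ on a ball of $\R^{n-1}$ of some radius $R_1$ depending only on $c$, $L$ and $\gamma$, and not on $x_0$. Taking this uniform value of $R_1$ finishes the proof; the regularity of $\varphi_{x_0}$ and the normalisations at $0$ are then automatic consequences of the choice of coordinates and the smoothness of $\rho$.
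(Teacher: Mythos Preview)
Your argument is correct and is the standard quantitative implicit function theorem approach: compactness of $\partial\Omega$ gives uniform lower bounds on $|\nabla\rho|$ and a uniform H\"older constant on a tubular neighbourhood, and these feed into a fixed-point argument producing a graph radius $R_1$ depending only on $c$, $L$, $\gamma$. The normalisations $\varphi_{x_0}(0)=0$, $\nabla\varphi_{x_0}(0)=0$ follow from your choice of frame.

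The paper does not give an independent proof here; it simply cites the two-dimensional version in \cite{BurMat} and observes that the argument is dimension-free. Your proof is in all likelihood the same argument spelled out (implicit function theorem plus compactness), so there is no substantive difference to report. One cosmetic remark: in your first paragraph you choose $R_0$ so that the gradient bounds hold on $U_{R_0}(\partial\Omega)$, which reads slightly circularly since $U_{R_0}$ depends on $R_0$; it would be cleaner to first fix any open neighbourhood $V$ of $\partial\Omega$ on which $|\nabla\rho|\ge c$ and then take $R_0$ small enough that $U_{R_0}\subset V$.
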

	\medskip
	
	\begin{rmk} The function $\varphi_{x_0}$ is defined on a region of the tangent hyperplane to the level set of $\rho$ across $x_0$.
		
		Then
		$$\rho(x)=\rho(x_0+\sum_{j=1}^{n-1} s_j\, \tau_j(x_0)+\varphi_{x_0}(s)\,\eta(x_0)),
		$$ where $\tau_1(x_0),\, \dots,\, \tau_{n-1}(x_0)$ is an orthonormal complement to $\eta(x_0)$ in $T_{x_0}(\R^n)$.
		
	\end{rmk}
	
	
	\begin{proof}
		The proof is identical to the corresponding one in the case $n=2$ as we developed in \cite{BurMat}.
	\end{proof} 
	
	The next fact is proved in Lemma 3 in \cite{MOV} in a more general situation. 
	
	\begin{lem}[Riesz-Geometric lemma]\label{rszgl} 
		If $\Omega\subset\R^n$ is a bounded domain with boundary of class $\ka{1, \gamma}$, then there exists 
		$$
		\lim_{\epsilon\to0}\int_{C^{\frac{R_0}{2}}_\epsilon(x)\cap\bar \Omega}(R_{j,\, i})(\zeta-x)\, dm(\zeta)$$
		$$=\biggl\{\int_{B_{\frac{R_0}{2}}(x)\cap\bar \Omega\cap\{\kappa_x>0\}}-\int_{B_{\frac{R_0}{2}}(x)\cap \Omega^c\cap\{\kappa_x<0\}}\biggr\}(R_{j,\, i})(\zeta-x)\, dm(\zeta).
		$$  
	\end{lem}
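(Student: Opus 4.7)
The plan is to localize near $x$ via Lemma~\ref{geom}, then decompose $\chi_{\bar\Omega}$ on $B_{R_0/2}(x)$ as a half-space indicator plus two ``pocket'' corrections. The half-space piece contributes zero to the principal value by an even-kernel symmetry argument, while the pocket pieces yield absolutely convergent integrals thanks to the $\ka{1,\gamma}$ graph estimate.

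The interesting case is $x\in\partial\Omega$ (for other $x$ the integrand is bounded near $\zeta=x$ and the statement is automatic). By Lemma~\ref{geom}, in adapted coordinates $(\tau,t)\in\R^{n-1}\times\R$ centred at $x$ with $t$ along $\eta(x)$, $\partial\Omega\cap B_{R_1}(x)$ is the graph $\{t=\varphi_x(\tau)\}$ with $\varphi_x(0)=0$ and $\nabla\varphi_x(0)=0$. Choose the orientation so that $\{\kappa_x<0\}:=\{t<0\}$ is the half-space approximating $\Omega$ from the inside and $\{\kappa_x>0\}:=\{t>0\}$ the one approximating $\Omega^c$. Up to a null set,
\begin{equation*}
\chi_{\bar\Omega}=\chi_{\{\kappa_x<0\}}+\chi_{\bar\Omega\cap\{\kappa_x>0\}}-\chi_{\Omega^c\cap\{\kappa_x<0\}}.
\end{equation*}
Multiplying by $\chi_{C_\epsilon^{R_0/2}(x)}R_{j,i}(\zeta-x)$ and integrating splits the left-hand side of the claim into three pieces.

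The half-annulus piece vanishes identically for every $\epsilon$. By the $-n$ homogeneity of $R_{j,i}$ and polar coordinates,
\begin{equation*}
\int_{C_\epsilon^{R_0/2}(x)\cap\{\kappa_x<0\}}R_{j,i}(\zeta-x)\,d\zeta=\log\!\left(\tfrac{R_0}{2\epsilon}\right)\int_{S^{n-1}\cap\{t<0\}}R_{j,i}(\omega)\,d\sigma(\omega),
\end{equation*}
and the spherical integral is zero: $R_{j,i}$ is even, and the Calder\'on--Zygmund mean-zero property on $S^{n-1}$ then forces its mean on each open hemisphere to vanish. For the remaining two pieces, the $\ka{1,\gamma}$ graph estimate from Lemma~\ref{geom} yields, with $C$ depending on $\|\nabla\varphi_x\|_\gamma$,
\begin{equation*}
\bar\Omega\cap\{\kappa_x>0\}\cap B_{R_0/2}(x)\subset\{0<t\le C|\tau|^{1+\gamma}\},
\end{equation*}
and analogously for $\Omega^c\cap\{\kappa_x<0\}\cap B_{R_0/2}(x)$. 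Using $|R_{j,i}(\zeta-x)|\le c_n|\zeta-x|^{-n}$, cylindrical coordinates give
\begin{equation*}
\int_{\{|t|\le C|\tau|^{1+\gamma}\}\cap B_{R_0/2}(x)}|\zeta-x|^{-n}\,d\zeta\lesssim\int_0^{R_0/2}r^\gamma\,dr<\infty,
\end{equation*}
so both pocket integrands are in $L^1(B_{R_0/2}(x))$.

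The truncated pocket integrals converge as $\epsilon\to 0$ by dominated convergence to the full ones over $B_{R_0/2}(x)$, yielding the stated formula. The main technical point is the geometric estimate bounding the two pockets inside $\{|t|\le C|\tau|^{1+\gamma}\}$, which is precisely what Lemma~\ref{geom} supplies; as the excerpt notes, this assertion is also contained in Lemma~3 of \cite{MOV} and can be invoked directly from there.
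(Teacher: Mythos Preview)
The paper does not give its own proof of this lemma; it simply defers to Lemma~3 in \cite{MOV}. Your argument supplies the natural self-contained proof and is essentially correct: the indicator decomposition $\chi_{\bar\Omega}=\chi_{\{\kappa_x<0\}}+\chi_{\bar\Omega\cap\{\kappa_x>0\}}-\chi_{\Omega^c\cap\{\kappa_x<0\}}$ is valid up to a null set, the half-annulus contribution vanishes exactly because $R_{j,i}$ is even with mean zero on $S^{n-1}$ (hence mean zero on each open hemisphere), and the pocket terms are handled by the $\ka{1,\gamma}$ graph estimate from Lemma~\ref{geom}. This is precisely the mechanism behind the result in \cite{MOV}, so your approach and the paper's citation point to the same argument.

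Two minor points worth tidying. First, in your integrability bound the radial integrand should be $r^{\gamma-1}$ rather than $r^\gamma$: the pocket occupies angular width $\sim r^\gamma$ on the sphere of radius $r$, so in polar coordinates $\int_0^{R_0/2} r^{-n}\cdot r^{n-1}\cdot r^\gamma\,dr=\int_0^{R_0/2} r^{\gamma-1}\,dr$, which is still finite since $\gamma>0$. Second, the graph description from Lemma~\ref{geom} is only guaranteed on $B_{R_1}(x)$, which need not contain $B_{R_0/2}(x)$; on $B_{R_0/2}(x)\setminus B_{R_1}(x)$ the kernel is bounded by $c_n R_1^{-n}$, so integrability there is trivial and should be mentioned to close the argument.
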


	\subsection{Decomposition of the singularities}
	
	Set, for any $x\in\R^n$, $d(x)=d(x,\, \partial\Omega)$ and $\delta(x)=\max\{d(x),\, \frac{R_0}{2}\}$ ($R_0$ as in Lemma \ref{geom}). 
	Then
	
	\begin{prop}\label{desing} Let $\Omega\subset\R^n$ be a bounded domain with boundary of class $\ka{1, \gamma}$ and let $W$ denote $\Omega$ or $\R^n\setminus\bar\Omega$. 
		
		If $f\in\ka{\gamma}(\bar W)\cap L^p(W)$ with $p>1$ and we identify $f$ with its extension by $0$ outside $W$, then there exists 
		$$
		R_{j,\, i}[f](x)=\lim_{\epsilon\to0} (R_{j,\, i})_\epsilon[f](x)
		$$ 
		for every $x\in\R^n$.
		
		Moreover
		$$
		R_{j,\, i}[f](x)=Q_{j,\, i}[f](x)+L_{j,\, i}[f](x)+f(x)\, \Theta_{j,\, i;\, W}^{\frac{R_0}{2}}(x),
		$$ 
		where
		\begin{align*}
			Q_{j,\, i}[f](x)&=\int_{\R^n\setminus B_{\delta(x)}(x)}f(\zeta)\, (R_{j,\, i})(\zeta-x)\, dm(\zeta),\\*[5pt]
			L_{j,\, i}[f](x)&=\int_{B_{\delta(x)}(x)}\frac{f(\zeta)-f(x)}{\|\zeta-x\|^\gamma}\, 
			\|\zeta-x\|^\gamma\, (R_{j,\, i})(\zeta-x)\, dm(\zeta),
		\end{align*} 
		and	if $\kappa_x$ is the first degree polynomial defining the hyperplane tangent to $\partial W$ at the point $x$, then 	
		$$
		\Theta_{W;\, j,\ i}^{\frac{R_0}{2}}(x)=\begin{cases} 
			0 &\text{if } d(x)>0,\\
			\{\int_{B_{\frac{R_0}{2}}(x)\cap\bar W\cap\{\kappa_x>0\}}-\int_{B_{\frac{R_0}{2}}(x)\cap W^c\cap\{\kappa_x<0\}}\}(R_{j,\, i})(\zeta-x)\, dm(\zeta)\,  &\text{if } d(x)=0. 
		\end{cases}
		$$ 
		
	\end{prop}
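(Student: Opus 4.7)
The plan is to split the principal-value integral defining $R_{j,i}[f](x)$ at the scale $\delta(x)$, so that the outer piece is exactly $Q_{j,i}[f](x)$, and then split the inner piece by writing $f(\zeta)=f(x)+(f(\zeta)-f(x))$ on $\bar W$: the Hölder-difference contribution forms $L_{j,i}[f](x)$, while the constant-in-$f$ contribution, after identification of a principal value over $B_{\delta(x)}(x)\cap \bar W$, becomes $f(x)\,\Theta_{W;j,i}^{R_0/2}(x)$ via the Riesz-Geometric lemma.

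First I would verify absolute convergence of $Q_{j,i}[f](x)$. Since $\delta(x)\ge R_0/2$, the kernel is bounded by $C\,\|\zeta-x\|^{-n}$ on $\R^n\setminus B_{\delta(x)}(x)$, so it belongs to $L^{p'}$ there (for the conjugate exponent $p'=p/(p-1)<\infty$); Hölder's inequality against $f\in L^p(W)$ gives a genuine Lebesgue integral. Next I would analyse the inner piece. Starting from $\lim_{\epsilon\to 0}\int_{\epsilon<\|\zeta-x\|<\delta(x),\,\zeta\in W} f(\zeta)\,R_{j,i}(\zeta-x)\,dm(\zeta)$ (the restriction to $W$ coming from $f\equiv 0$ on $W^c$), the addition and subtraction of $f(x)$ produces two terms: the Hölder-difference integrand is $O(\|\zeta-x\|^{\gamma-n})$ on $\bar W$ by the $\ka{\gamma}(\bar W)$ bound $|f(\zeta)-f(x)|\le\|f\|_\gamma\,\|\zeta-x\|^\gamma$, so it converges absolutely and yields $L_{j,i}[f](x)$; the constant-in-$f$ term reduces to $f(x)\,\lim_{\epsilon\to 0}\int_{C^{\delta(x)}_\epsilon(x)\cap \bar W}R_{j,i}(\zeta-x)\,dm(\zeta)$.

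The last principal value is precisely where Lemma~\ref{rszgl} enters. When $d(x)=0$ and $W=\Omega$, the lemma delivers exactly the expression prescribed by $\Theta_{W;j,i}^{R_0/2}(x)$; the case $W=\R^n\setminus\bar\Omega$ is obtained by observing that the principal value of $R_{j,i}$ over the full ball $B_{R_0/2}(x)$ vanishes by the mean-value-zero of the Calderón--Zygmund kernel on each sphere, so the integral on $\bar W$ equals the negative of the integral on $\bar\Omega$, which reproduces the $\Theta$ formula for the complementary domain. When $d(x)>0$ the integration domain for the constant piece either equals the whole ball $B_{d(x)}(x)\subset W$—whence the Calderón--Zygmund mean-value-zero kills it—or lies on the outer side of the singularity, a situation absorbed transparently into $Q_{j,i}$, and the convention $\Theta_{W;j,i}^{R_0/2}(x)=0$ is vindicated.

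The main obstacle will be the careful bookkeeping of the principal value at boundary points $x\in\partial\Omega$. The integrals of $R_{j,i}$ over $B_{R_0/2}(x)\cap\bar W$ and over $B_{R_0/2}(x)\cap W^c$ individually fail to have absolute meaning at $\zeta=x$; only the symmetric subtraction encoded in Lemma~\ref{rszgl}—which uses the $\ka{1,\gamma}$ regularity of $\partial\Omega$ from Lemma~\ref{geom} to compare $\partial\Omega$ locally with its tangent hyperplane through $x$—yields a well-defined limit. Tracking this cancellation in the correct order (Hölder-difference first, then Riesz-Geometric identification of the remaining constant term) is what makes each of $Q_{j,i}[f](x)$, $L_{j,i}[f](x)$ and $f(x)\,\Theta_{W;j,i}^{R_0/2}(x)$ a bona fide quantity whose sum coincides with $R_{j,i}[f](x)$.
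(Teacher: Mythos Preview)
Your proposal is correct and follows essentially the same strategy as the paper: split at scale $\delta(x)$ to isolate $Q_{j,i}$, subtract $f(x)$ on the inner piece to produce the weakly singular $L_{j,i}$, and invoke the Riesz--Geometric lemma (Lemma~\ref{rszgl}) for the constant term at boundary points. One small bookkeeping correction: in the intermediate case $x\in W$ with $0<d(x)<R_0/2$ (so $\delta(x)=R_0/2>d(x)$), the leftover constant piece is not ``absorbed into $Q_{j,i}$'' but rather, by mean-value-zero on full annuli, equals $-f(x)\int_{B_{\delta(x)}(x)\cap W^c}(R_{j,i})_x$, which is precisely the term that upgrades your H\"older-difference integral over $B_{\delta(x)}(x)\cap W$ to the full-ball $L_{j,i}[f](x)$ as stated in the proposition; the paper handles this by first working at scale $d(x)$ and then passing to $R_0/2$ via the annulus $C_{d(x)}^{R_0/2}(x)$.
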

	
	\medskip
	\begin{rmk} The term $\Theta_{W;\, j,\ i}^{\frac{R_0}{2}}(x)$ is an intrinsic geometric object. Sometimes we will also use it in the form 
$$
		\int_{B_{\frac{R_0}{2}}(x)\cap W}(R_{j,\, i})(\zeta-x)\, dm(\zeta).
		$$
		
	\end{rmk}
	
	\medskip
	
	\begin{rmk} In order to avoid notation, from now on we will use in the proofs of results the notation $(R_{j,\, i})_x(\zeta)$ instead of $(R_{j,\, i})(\zeta-x) dm(\zeta)$.
	\end{rmk}
	
	\medskip
	
	\begin{rmk} Since $R_{j,\, i}$ are classical Calder\'on--Zygmund operators, the existence of the principal value is well known for functions in many different spaces (See \cite[Corollary 5.8]{Duo}), nevertheless, we need here the existence of the principal value at each point in the plane, for functions in the aforementioned class. 
	\end{rmk}
	
	\begin{proof} Let $x\in\R^n$. 
		
		\begin{itemize}
			
			\item  If $d(x)>0$, then for $0<\epsilon<d(x)$ we have 
			$$
			(R_{j,\, i})_\epsilon[f](x)=\int_{\R^n\setminus B_{\epsilon}(x)}f\, (R_{j,\, i})_x$$
			$$=\int_{\R^n\setminus B_{d(x)}(x)}f\, (R_{j,\, i})_x
			+\int_{B_{d(x)}(x)\setminus B_\epsilon(x)}f\, (R_{j,\, i})_x.
			$$
			
			\begin{itemize}
				\item In the case of $x\notin W$, the second term is $0$ and we have 
				$$
				R_{j,\, i}[f](x)=\lim_{\epsilon\to0} (R_{j,\, i})_\epsilon[f](x)=\int_{\R^n\setminus B_{d(x)}(x)}f\, (R_{j,\, i})_x.
				$$
				
				\item In the case of $x\in W$, we have   
				\begin{equation*}
					\begin{split}
						(	R_{j,\, i})_\epsilon[f](x)&=\!\int_{\R^n\setminus B_{d(x)}(x)}f\, (R_{j,\, i})_x\!+\!\int_{B_{d(x)}(x)\setminus B_\epsilon(x)}(f-f(x))\, (R_{j,\, i})_x\\*[5pt]
						&=\!\int_{\R^n\setminus B_{d(z)}(z)}f\, (R_{j,\, i})_x\!+\!\int_{B_{d(x)}(x)\setminus B_\epsilon(x)}\!\frac{f(\zeta)-f(x)}{\|\zeta-x\|^\gamma}\, 
						\|\zeta-x\|^\gamma(R_{j,\, i})_x\, dm(\zeta),\!\!\!
					\end{split}
				\end{equation*} 
				and in the second term  
				has a weakly singular kernel acting against an integrable function, so the limit exists and  
				\begin{equation*}
					\begin{split}
						R_{j,\, i}[f](x)&=\lim_{\epsilon\to0} (R_{j,\, i})_\epsilon[f](x)\\*[5pt]
						&=\int_{\R^n\setminus B_{d(x)}(x)}f\, (R_{j,\, i})_x+\int_{B_{d(x)}(x)}\frac{f(\zeta)-f(x)}{\|\zeta-x\|^\gamma}\, 
						\|\zeta-x\|^\gamma\, (R_{j,\, i})_x\, dm(\zeta).
					\end{split}
				\end{equation*} 
				
				If $d(x)\leq\frac{R_0}{2}$, then 
				$$
				\int_{\R^n\setminus B_{d(x)}(x)}f\, (R_{j,\, i})_x=
				\int_{\R^n\setminus B_{\frac{R_0}{2}}(x)}f\, (R_{j,\, i})_x+\int_{B_{\frac{R_0}{2}}(x)\setminus B_{d(x)}(x)}f\, (R_{j,\, i})_x,
				$$ 
				and  applying again the cancellation lemma (below), we have that the second integral is  
				$$
				\int_{C_{d(x)}^{\frac{R_0}{2}}(x)}\frac{f(\zeta)-f(x)}{\|\zeta-x\|^\gamma}\, 
				\|\zeta-x\|^\gamma\, (R_{j,\, i})_x\, dm(\zeta),
				$$ where $C_{d(x)}^{\frac{R_0}{2}}=B_{\frac{R_0}{2}}(x)\setminus B_{d(x)}(x)$.

				Implementing these facts in the formula for $R_{j,\, i}[f](x)$ above, we have the result. 
			\end{itemize}	
			
			\item If $d(x)=0$, then we for any $\epsilon<\frac{R_0}{2}$ we have 
			$$
			(R_{j,\, i})_\epsilon[f](z)=\int_{\R^n\setminus B_{\frac{R_0}{2}}(x)}f\, (R_{j,\, i})_x
			+\int_{C^{\frac{R_0}{2}}_\epsilon(x)}f\, (R_{j,\, i})_x.
			$$
			
			The second term, is 
			\begin{equation*}
				\begin{split}
					\int_{C^{\frac{R_0}{2}}_\epsilon(x)\cap\bar W} f\, (R_{j,\, i})_x &=\int_{C^{\frac{R_0}{2}}_\epsilon(x)\cap\bar W}(f-f(x))\, (R_{j,\, i})_x+f(x)\, \int_{C^{\frac{R_0}{2}}_\epsilon(x)\cap\bar W}(R_{j,\, i})_x\\*[5pt]
					&=\int_{C^{\frac{R_0}{2}}_\epsilon(x)\cap\bar W}\frac{f(\zeta)-f(x)}{\|\zeta-x\|^\gamma}\, 
					\|\zeta-x\|^\gamma\, (R_{j,\, i})_x\, dm(\zeta)\\*[5pt]
					&\quad+f(x) \int_{C^{\frac{R_0}{2}}_\epsilon(x)\cap\bar W}(R_{j,\, i})_x.
				\end{split}
			\end{equation*}
			
			From these two terms, the first one is weakly singular integral and has a limit
			$$
			\int_{B_{\frac{R_0}{2}}(x)\cap\bar W}\frac{f(\zeta)-f(x)}{\|\zeta-x\|^\gamma}\, 
			\|\zeta-x\|^\gamma\, (R_{j,\, i})_x\, dm(\zeta).
			$$
			
			
			The second term, according to the Riesz-Geometric lemma has a limit as $\epsilon\to0$, that is
			\begin{equation*}
				f(x)\, \biggl\{\int_{B_{\frac{R_0}{2}}(x)\cap\bar W\cap\{\kappa_x>0\}}-\int_{B_{\frac{R_0}{2}}(x)\cap W^c\cap\{\kappa_x<0\}}\biggr\}(R_{j,\, i})_x.\qedhere
			\end{equation*}
		\end{itemize}	
	\end{proof}

	\subsection{The estimates.} 
	
	Let us assume WLOG that $0\in\Omega$ and also that $R_0$ in Lemma~\ref{geom} is not larger than $1$. 
	
	The parameter 
	$$
	\vartriangle(x)=\sqrt{\max\{\|x\|^2,\, d(x)^2\}},
	$$ 
	for $x\in(\overline{\Omega\cup U_{R_0}})^c$ plays an important role in the estimates.
	
	Concerning the Riesz operators $R_{j,i}$ we will use the same notation for the operators and for the kernels and even using $R$ and avoiding the sub-indexes unless it be strictly necessary.  We also avoid using the indication for Lebesgue measure ($dm$) in most cases.
	
	The following theorem condensates the main estimates. 
	
	\begin{thm}\label{thmR} Let $\Omega$ be a bounded domain with boundary of class $\ka{1, \gamma}$, $\phi\in\ka{\infty}(\Omega)\cap \Lip{(\gamma,\bar\Omega)}$
		and $\psi\in\ka{\infty}(\R^n\setminus\bar\Omega)\cap \Lip{(\gamma,\R^n\setminus\Omega)}\cap L^2(\R^n\setminus\Omega)$. We use the same notation $\phi$ or $\psi$ for their extension by $0$ to the whole space. 
		
		Assume also that for a fixed constant $C(\psi)$ we have $$|\psi(x)|\le\frac{C(\psi)}{\delta(x)^n}$$ for $x\in(\Omega\cup U_{R_0})$. We will use the notation $f=\phi$ or $f=\psi$ whenever no distinction be necessary.


		\begin{enumerate}
			\item  $R[f]\in(L^\infty\cap L^2)(\R^n)$.
			
			\item There exists a constant $C=C(\gamma,\, \Omega,\, R_0)$ such that 
			
			\begin{enumerate}
				\item[i)] $$\|\chi_\Omega\, R[f]\|_{L^\infty}\le C\, \biggl(\|f\|_{Lip(\gamma,\, \bar\Omega)}+\|f\|_{Lip(\gamma,\, \R^n\setminus\Omega)}\biggr),
				$$ 
				
				\item[ii)] If $x\in\Omega^c\cap U_{R_0}$ then $$|(\chi_{\R^n\setminus\bar\Omega}\, R[f])(x)|\le C\, \biggl(\|f\|_{Lip(\gamma,\, \bar\Omega)}+\|f\|_{Lip(\gamma,\, \R^n\setminus\Omega)}\biggr),
				$$ 
				
				\item[iii)] If $x\in\R^n\setminus(\overline{\Omega^c\cap U_{R_0}})^c$ then $$|(\chi_{\R^n\setminus\Omega}\, R[\phi])(x)|\le C\, \frac{\|\phi\|_{Lip(\gamma,\, \bar\Omega)}}{\delta(x)^n},
				$$ 
				
				\item[iv)] If $x\in\R^n\setminus(\overline{\Omega^c\cap U_{R_0}})^c$ then $$|(\chi_{\R^n\setminus\Omega}\, R[\psi])(x)|\le C\, (1+\|\psi\|_{Lip(\gamma,\, \bar\Omega)})\, C(\psi)\, \biggl\{\frac{1}{\delta(x)^n}+\frac{1}{\triangle(x)^n}\, (1+\ln(\triangle(x)))\biggr\}.
				$$ 
				
			\end{enumerate}
		\end{enumerate} 	
	\end{thm}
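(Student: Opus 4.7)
The plan is to base every estimate on the pointwise decomposition furnished by Proposition \ref{desing},
$$R_{j,i}[f](x) = Q_{j,i}[f](x) + L_{j,i}[f](x) + f(x)\, \Theta^{R_0/2}_{j,i;W}(x),$$
and to analyse each summand in each of the four geometric regimes (i)--(iv). The $L^2$ statement in (1) is immediate from classical Calderón--Zygmund theory applied to the convolution operators $R_{j,i}$; the only input required is that $\phi,\psi \in L^2(\R^n)$, which for $\psi$ follows from the assumed decay $|\psi(\zeta)| \le C(\psi)/\delta(\zeta)^n$ since $2n > n$. The $L^\infty$ statement in (1) will follow a posteriori once the pointwise bounds (i)--(iv) are in hand.

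First I would dispose of the two universally well-behaved pieces. The local term $L_{j,i}[f](x)$ is a weakly singular integral: combining $|f(\zeta)-f(x)|\le \|f\|_{Lip(\gamma)}\|\zeta-x\|^\gamma$ with $|R_{j,i}(\zeta-x)|\le c/\|\zeta-x\|^n$ yields an integrand of homogeneity $\gamma - n$, integrable on $B_{\delta(x)}(x)$ with bound $C\,\delta(x)^\gamma \|f\|_{Lip(\gamma)} \le C R_0^\gamma \|f\|_{Lip(\gamma)}$. The geometric symbol $\Theta^{R_0/2}_{j,i;W}$ is supported on $\partial\Omega$ and is bounded by a constant depending only on the $\ka{1,\gamma}$ structure of $\partial\Omega$ and on $R_0$ by Lemma \ref{rszgl}; hence $|f(x)\,\Theta^{R_0/2}| \le C\|f\|_\infty \le C\|f\|_{Lip(\gamma)}$. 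This already handles (i) and (ii), where $\delta(x) = R_0/2$ and the remaining $Q_{j,i}[f]$ integrates a bounded kernel against $f$ over $\R^n\setminus B_{R_0/2}(x)$: the $\Omega$--portion contributes $C\|\phi\|_\infty|\Omega|$, while the outer portion is controlled by $\int 1/(\delta(\zeta)^n\,\|\zeta-x\|^n)\,dm(\zeta) \le C\, C(\psi)$ using the decay hypothesis on $\psi$.

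For the decay estimates (iii) and (iv) the point $x$ lies far from $\Omega$, so $\Theta \equiv 0$, $L$ vanishes ($f(x)=0$), and everything sits in $Q_{j,i}[f](x)$. Case (iii) is direct: since $\phi$ vanishes off $\bar\Omega$ and $\|\zeta-x\| \ge d(x)$ for every $\zeta\in \Omega$, one obtains $|R[\phi](x)| \le \|\phi\|_\infty |\Omega|/d(x)^n \le C\|\phi\|_{Lip(\gamma)}/\delta(x)^n$. Case (iv) is the main obstacle: $\psi$ is supported on the unbounded set $\R^n\setminus\bar\Omega$ with only the critical decay $1/\delta(\zeta)^n$, which competes exactly with the kernel decay $1/\|\zeta-x\|^n$. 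The integral must be split according to the scales $\|x\|$, $d(x)$, and $\|\zeta\|$: an inner region $\|\zeta\| \le \triangle(x)/2$ where the kernel is uniformly of size $1/\triangle(x)^n$ and $\psi$'s tail decay gives a finite mass; a critical annular region $\|\zeta\| \sim \|\zeta-x\| \sim \triangle(x)$ where the product of both decays yields an integral of the form $\triangle(x)^{-n}\int dr/r$, producing precisely the $\ln(\triangle(x))$ factor; and a far region $\|\zeta\|\gg \triangle(x)$ where both decays cooperate to give a convergent tail of order $1/\triangle(x)^n$.

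The critical annulus is where I expect the real work to happen. A raw absolute-value estimate only yields the logarithm, so to prevent it from accumulating beyond a single power one must exploit either the zero spherical mean of $R_{j,i}$ or the Hölder regularity of $\psi$. Concretely, in the annulus I would write $\psi(\zeta) = [\psi(\zeta) - \psi(\zeta_\ast)] + \psi(\zeta_\ast)$ for a suitable reference point $\zeta_\ast$ near the centre: the Hölder difference gains $\|\zeta-\zeta_\ast\|^\gamma$ and is absolutely integrable without logarithm, while the constant term multiplies $\int_{\text{annulus}} R_{j,i}(\zeta-x)\,dm(\zeta)$, which by the cancellation of the Riesz kernel over symmetric shells is bounded by a constant times $1/\triangle(x)^n$ and carries only the one logarithm coming from the $\delta$-decay prefactor. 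Combining the three regions yields the stated bound $C(1+\|\psi\|_{Lip(\gamma)})C(\psi)\{\delta(x)^{-n} + \triangle(x)^{-n}(1+\ln \triangle(x))\}$.
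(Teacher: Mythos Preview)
Your overall strategy---decompose via Proposition \ref{desing} and treat $Q$, $L$, $\Theta$ separately in each geometric regime---matches the paper's. But two of your steps fail as written.

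First, your claim that $L_{j,i}[f]$ is controlled by the H\"older bound $|f(\zeta)-f(x)|\le \|f\|_{Lip(\gamma)}\|\zeta-x\|^\gamma$ is false whenever $B_{\delta(x)}(x)$ crosses $\partial\Omega$, which happens exactly when $x\in U_{R_0/2}$ (then $\delta(x)=R_0/2>d(x)$). On the portion of the ball lying on the wrong side of $\partial\Omega$ the function $f$ (extended by zero) vanishes identically, so $f(\zeta)-f(x)=-f(x)$ and the integrand is \emph{not} weakly singular. The paper handles this by splitting $B_{\delta(x)}(x)$ along both $\partial\Omega$ and the tangent hyperplane at the nearest boundary point; the dangerous piece becomes $-f(x)\int_{\text{region}}(R_{j,i})_x$, and geometric cancellation lemmas (Lemmas \ref{Aux1} and \ref{Aux2}), which rely on the $\ka{1,\gamma}$ regularity of $\partial\Omega$, are needed to bound these integrals uniformly. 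Your argument supplies no substitute for this mechanism, so (i) and (ii) are not established.

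Second, in regime (iv) you assert $L[\psi](x)=0$ because ``$f(x)=0$''. But $\psi$ is supported \emph{outside} $\bar\Omega$, precisely where $x$ now lies, so $\psi(x)\neq 0$ in general and $L[\psi](x)=\int_{B_{d(x)}(x)}(\psi(\zeta)-\psi(x))(R_{j,i})_x\,dm(\zeta)$ is an integral over a ball of large radius $d(x)$. This term is in fact a substantial source of the logarithmic factor: the paper introduces an auxiliary intermediate scale $\alpha(x)$ and performs a multi-region decomposition of $B_{d(x)}(x)$ of complexity comparable to the one you sketch for $Q$. Simply declaring $L$ to vanish, or folding it into your $Q$-analysis without reproducing that work, leaves (iv) unproved.
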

	
	\medskip
	
	\begin{rmk}
		In particular, for $f=\chi_\Omega$ we have  $$|R[\chi_\Omega](x)|\le C\, \biggl(\chi_{U_{R_0}\cup\Omega}(x)+\frac{\chi_{\R^n\setminus (U_{R_0}\cup\Omega)}(x)}{\delta(x)^n}\biggr).$$
	\end{rmk}
	
	The proof this theorem is the subject of the forthcoming subsections.

	\subsubsection{Uniform estimates.} Using the decomposition given in Proposition \ref{desing} we estimate $Q[f]$ and $L[f]$ in different situations depending on the support of $f$ and the position of the point $x$. The proof follows the argument for the same result in \cite{BurMat} and we will use this reference as a guideline and give a sketch of ideas and main calculations necessary in the actual case.

	{\bf 1) Estimates for $L$:}
	
	\begin{prop} 
		If $x\in\Omega\cup\overline{U_{R_0}}$, there exists a constant $C=C(\gamma,\, \Omega)>0$ such that
		$$
		|L[\phi](x)|\le C\, \|\phi\|_\gamma
		$$ 
		and
		$$
		|L[\psi](x)|\le C\, \|\psi\|_\gamma.
		$$

	\end{prop}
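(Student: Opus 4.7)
The plan is to handle $\phi$ in detail; the bound for $\psi$ follows by complementary symmetry, swapping the roles of $\bar\Omega$ and $\R^n\setminus\Omega$ throughout. The guiding principle in all cases is that $\|\zeta-x\|^\gamma R(\zeta-x)$ is only weakly singular (of size $\|\zeta-x\|^{\gamma-n}$), so once the quotient $|\phi(\zeta)-\phi(x)|/\|\zeta-x\|^\gamma$ is bounded by a multiple of $\|\phi\|_\gamma$ the integrand is integrable on $B_{\delta(x)}(x)$, and the resulting integral is bounded by $C\,\delta(x)^\gamma\le C\,\max\{\mathrm{diam}(\Omega),R_0\}^\gamma$.

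First I would dispose of the easy sub-case $x\in\Omega$ with $d(x)\ge R_0/2$: then $\delta(x)=d(x)$ and the whole ball $B_{\delta(x)}(x)$ lies in $\Omega$, so $\zeta$ and $x$ both belong to $\bar\Omega$ and the Hölder bound applies directly. For $x\in\Omega$ with $d(x)<R_0/2$ one has $\delta(x)=R_0/2$ and the ball crosses $\partial\Omega$; I would split
$$L[\phi](x)=\int_{B_{R_0/2}(x)\cap\bar\Omega}(\phi(\zeta)-\phi(x))R(\zeta-x)\,dm(\zeta)-\phi(x)\int_{B_{R_0/2}(x)\setminus\bar\Omega}R(\zeta-x)\,dm(\zeta),$$
using $\phi\equiv 0$ outside $\bar\Omega$. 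The first piece is bounded by $C\|\phi\|_\gamma R_0^\gamma$ via the Hölder estimate on $\bar\Omega$. For the second, the oddness identity $\int_{B_{R_0/2}(x)}R(\zeta-x)\,dm=0$ turns it into $\phi(x)\int_{B_{R_0/2}(x)\cap\bar\Omega}R(\zeta-x)\,dm$, which Lemma \ref{rszgl} controls uniformly in $x$; combined with $|\phi(x)|\le\|\phi\|_\gamma$ this finishes the case.

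The genuinely delicate case is $x\in\overline{U_{R_0}}\setminus\bar\Omega$, where $\phi(x)=0$ but the Hölder seminorm of $\phi$ on $\bar\Omega$ does not immediately bound $|\phi(\zeta)|/\|\zeta-x\|^\gamma$ for $\zeta\in\bar\Omega$. The fix is to pivot through the nearest boundary point $y\in\partial\Omega$ to $x$, setting $c=\phi(y)$ and writing $\phi(\zeta)=(\phi(\zeta)-c)+c$. Since $x\notin\bar\Omega$ and $\zeta\in\bar\Omega$ force $\|\zeta-x\|\ge d(x)=\|x-y\|$, the triangle inequality gives $\|\zeta-y\|\le 2\|\zeta-x\|$, so
$$|\phi(\zeta)-c|\le\|\phi\|_\gamma\|\zeta-y\|^\gamma\le 2^\gamma\|\phi\|_\gamma\|\zeta-x\|^\gamma,$$
and the corresponding piece of $L[\phi](x)$ is controlled by the weakly singular estimate as before. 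The leftover term $c\int_{B_{\delta(x)}(x)\cap\bar\Omega}R(\zeta-x)\,dm$ is again bounded by Lemma \ref{rszgl}.

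I expect the main obstacle to be the uniform control of the geometric integral $\int_{B\cap\bar\Omega}R(\zeta-x)\,dm$ as $x$ ranges over $\Omega\cup\overline{U_{R_0}}$, including both sides of $\partial\Omega$. This is exactly where the $\ka{1,\gamma}$ regularity of $\partial\Omega$ and the local graph representation from Lemma \ref{geom} are essential: they make this geometric integral a bounded (in fact continuous) function of $x$ across $\partial\Omega$. The argument for $\psi$ is entirely symmetric, swapping the roles of $\bar\Omega$ and $\R^n\setminus\Omega$ and using the same pivot-point device on $\partial\Omega$ for points $x$ lying on the side of the boundary where $\psi$ is extended by zero.
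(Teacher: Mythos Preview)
Your strategy is correct and matches the paper's: split $L[\phi](x)$ into the piece over $B_{\delta(x)}(x)\cap\bar\Omega$, where the H\"older bound makes the kernel weakly singular, and a residual term $c\int_{B_{\delta(x)}(x)\cap\bar\Omega}R(\zeta-x)\,dm$ controlled by a uniform geometric bound. Your pivot through the nearest boundary point $y$ to handle $x\in\overline{U_{R_0}}\setminus\bar\Omega$ is a clean device that the paper's sketch does not make explicit.

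The one imprecision is your citation of Lemma~\ref{rszgl} for the uniform bound on $\int_{B_{\delta(x)}(x)\cap\bar\Omega}R(\zeta-x)\,dm$. That lemma only asserts \emph{existence} of the principal value at boundary points; it does not by itself give a bound uniform in $x$ as $d(x)\to 0$. The paper obtains that uniform control by introducing the tangent hyperplane $\{\kappa_\tau=0\}$ at the nearest boundary point $\tau$ and splitting the ball into four regions according to the signs of $\kappa_\tau$ and $\rho$: the two ``lune'' pieces between $\partial\Omega$ and the tangent plane are bounded by Lemma~\ref{Aux1}, and the half-ball $B_{\delta(x)}(x)\cap\{\kappa_\tau\ge 0\}$ by Lemma~\ref{Aux2}. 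Together these yield exactly the estimate you need; you have correctly identified that this is where the $\ka{1,\gamma}$ regularity and Lemma~\ref{geom} enter, but the actual tools are Lemmas~\ref{Aux1}--\ref{Aux2} rather than Lemma~\ref{rszgl}.
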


	\begin{proof} 
		We use the compactness of the support of $\phi$ plus Lemmas \ref{Aux1} and \ref{Aux2} below. Actually for $x\in\overline{U_{R_0}}$ let $\tau\in\partial\Omega$ such that $d(x)=\|x-\tau\|$ and  $\kappa_\tau$ defining the half space determined by the tangent line to $\partial\Omega$ across $\tau$ and 
		containing the inward normal vector to $\partial\Omega$ in $\tau$. Then the integral $L[f](x)$ can be decomposed in integrals over the domains resulting from the decomposition   $$(B_{\delta(x)}(x)\cap\{\kappa_\tau\le0\})\cup(B_{\delta(x)}(x)\cap\{\kappa_\tau\geq0\})$$
		$$=(B_{\delta(x)}(x)\cap\{\kappa_\tau\le0\}\cap\{\rho\le0\})
		\cup(B_{\delta(x)}(x)\cap\{\kappa_\tau\le0\}\cap\{\rho>0\})$$
		$$\cup(B_{\delta(x)}(x)\cap\{\kappa_\tau\geq0\}\cap\{\rho\le0\})\cup(B_{\delta(x)}(x)\cap\{\kappa_\tau\geq0\}\cap\{\rho>0\}.$$

		If $f=\phi$, in the case of $x\in\Omega$, we use the $\gamma$-Lipschitz bound to obtain weakly singular functions in the integrals performed on  
		$\{\rho\le0\}$ to obtain the bound $C\, \|\phi\|_\gamma
		$. In the other cases $\{\rho\l>0\}$ we obtain the bound $C\, \|\phi\|_\infty$ using the aforementioned lemmas.

	\end{proof}	
	
	\begin{lem}\label{Aux1} 
		Under the conditions and notation of Theorem \ref{thmR}, and assuming that $\tau\in\partial\Omega$ is a point minimizing the distance of the points in $\partial\Omega$ to the point $x$, the integrals 
		$$ 
		\int_{B_{\delta(x)}(x)\cap\{\kappa_\tau\le0\}\cap\{\rho>0\}} (R)_x
		$$ 
		and 
		$$
		\int_{B_{\delta(x)}(x)\cap\{\kappa_\tau\geq0\}\cap\{\rho\le0\}} (R)_x
		$$ 
		are uniformly bounded.
		
	\end{lem}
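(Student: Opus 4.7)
The plan is to rewrite each integration region as a half-space minus a \emph{crescent} governed by the $\ka{1,\gamma}$ regularity of $\partial\Omega$, and estimate the two pieces separately. First I apply Lemma \ref{geom} to parametrise $\partial\Omega$ near $\tau$ as the graph of a function $\varphi$ in local coordinates centred at $\tau$ with tangent hyperplane $\{y_n=0\}$, so that $\varphi(0)=0$, $\nabla\varphi(0)=0$, and $|\varphi(y')|\le C|y'|^{1+\gamma}$. In these coordinates $\{\kappa_\tau\le 0\}=\{y_n\le 0\}$ and, locally, $\{\rho\le 0\}=\{y_n\ge\varphi(y')\}$, so
$$
\{\kappa_\tau\le 0\}\cap\{\rho>0\}=\{y_n\le 0\}\setminus\bigl(\{y_n\le 0\}\cap\bar\Omega\bigr),
$$
and similarly for the second integral. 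Consequently each of the two integrals splits as a half-space integral $I_{\mathrm{HS}}$ minus a crescent integral $I_{\mathrm{cr}}$ whose $y_n$-extent at level $y'$ is bounded by $|\varphi(y')|$.

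For the half-space piece, polar coordinates $\zeta=x+r\omega$ reduce $I_{\mathrm{HS}}$ to $\int g(r)\,dr/r$, where $g(\alpha)=\int_{\{\omega_n\le\alpha\}}R(\omega)\,d\omega$ is the Riesz cap integral evaluated at $\alpha=\pm d(x)/r$. Since $R$ has mean zero on $S^{n-1}$ and on every hemisphere, $g$ vanishes at $\alpha=0$ (hemisphere) and at $\alpha=\pm 1$ (degenerate cap), and a direct estimate on the thin strip separating the cap from a hemisphere gives $|g(\alpha)|\le C|\alpha|$ near $0$ and $|g(\alpha)|\le C(1-|\alpha|)^{(n-1)/2}$ near $\pm 1$. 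After the change $u=d(x)/r$ the integral becomes $\int_0^1 g(u)\,du/u$, which is uniformly bounded. For the crescent piece, the slice at level $y'$ satisfies $|R(\zeta-x)|\le C|y'|^{-n}$ since $\zeta$ and $x$ have the same tangential coordinate origin (so $\|\zeta-x\|\ge|y'|$); multiplying by the thickness $|\varphi(y')|\le C|y'|^{1+\gamma}$ and integrating in polar coordinates in $y'$ gives
$$
|I_{\mathrm{cr}}|\le C\int_0^{R_0}\rho^{\gamma-1}\,d\rho\le CR_0^{\gamma}/\gamma,
$$
again uniformly in $x$.

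The main obstacle is the case $x\in U_{R_0}\setminus\bar\Omega$, when $x$ itself lies in one of the two regions and the integral must be interpreted as a principal value. I handle it by excising $B_\epsilon(x)$ and invoking the exact annular cancellation $\int_{B_R(x)\setminus B_\epsilon(x)}R(\zeta-x)\,dm(\zeta)=0$ together with the Riesz-Geometric Lemma \ref{rszgl}; this shows the principal value exists and reduces its computation to the same cap identity as above, while the crescent estimate is unaffected since the lower bound $\|\zeta-x\|\ge|y'|$ continues to hold. Combining these bounds yields the required uniform estimate in both cases.
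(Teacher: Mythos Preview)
Your proof is correct and follows essentially the same strategy as the paper: pass to graph coordinates centred at $\tau$, view each region as the corresponding half-space (intersected with $B_{\delta(x)}(x)$) minus the thin $\ka{1,\gamma}$ crescent between the tangent hyperplane and $\partial\Omega$, and estimate the two pieces separately. The paper's terse sketch invokes ``the cancellation property'' for the half-space contribution (fleshed out via Stokes' theorem in the adjacent Lemma~\ref{Aux2}), whereas you carry it out through the spherical-cap function $g(\alpha)$ and its vanishing on hemispheres; your crescent bound $\int_0^{R_0}r^{\gamma-1}\,dr$ is exactly what the paper means by the kernel becoming ``weakly singular''.
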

	
	\begin{proof} 
		Taking coordinates centered at $\tau$ given by the frame $\eta=\eta(\tau)=\frac{\nabla\rho(\tau)}{\|\nabla\rho(\tau)\|}$, and
		$v_1,\, \dots,\, v_{n-1}$, complementing the first vector to an orthonormal basis, we take advantage to the fact that after an affine isometric change of coordinates $B_{\delta(x)}(x)\cap\{\rho=0\}$ can be 
		described by the graph of a function on $B_{\delta(x)}(x)\cap\{\kappa_\tau=0\}$. This function is in the $Lip(\gamma)$ class and this fact together with the cancellation property cause the kernel to be weakly singular. Further rather direct computation give the estimates.
		
	\end{proof}	
	
	\begin{lem}\label{Aux2} 
		$$
		\biggl|\int_{B_{\delta(x)}(x)\cap\{\kappa_\tau\geq0\}}(R_{j,\, i})_x
		\biggr|\le C(n).
		$$
	\end{lem}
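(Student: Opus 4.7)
My plan is to translate $x$ to the origin and write the half-space $\{\kappa_\tau \geq 0\}$ as $H = \{y \in \R^n : a \cdot y \geq -c\}$ with $a$ a unit vector and $|c| = d(x)$; set $r = \delta(x)$. If $|c| > r$, the ball $B_r$ lies entirely on one side of the hyperplane $\partial H$, so the integral either vanishes outright or equals $\int_{B_r} R_{j,i}$, which in turn vanishes by the standard mean-zero property of the Calder\'on--Zygmund kernel on origin-centred spheres. Assuming $|c| \leq r$ and passing to polar coordinates,
\[
I \;=\; \int_0^r \frac{d\rho}{\rho}\, \Phi\!\left(-\frac{c}{\rho}\right), \qquad \Phi(t) := \int_{\{\omega \in S^{n-1}:\, a\cdot\omega \geq t\}} c_n(\delta_{j,i} - n\omega_j \omega_i)\, d\sigma(\omega).
\]
For $\rho < |c|$ the cap is $S^{n-1}$ or empty, so $\Phi \equiv 0$ there; on the remaining interval $[|c|, r]$, the substitution $u = -c/\rho$ turns $d\rho/\rho$ into $-du/u$, yielding $|I| = \bigl|\int \Phi(u)/u\, du\bigr|$ over a subinterval of $[-1, 1]$ bounded away from $0$.

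The heart of the argument is to prove that $\Phi$ vanishes at the three points $-1$, $0$, $1$. The endpoints are trivial. For the decisive value $\Phi(0) = 0$---zero mean of the angular kernel on every hemisphere---I would exploit the fact that the matrix $M_{j,i}(a) := \int_{\{a\cdot\omega \geq 0\}}(\delta_{j,i} - n\omega_j\omega_i)\,d\sigma$ is rotation-covariant in $a$, so that by choosing a rotation carrying $a$ to $e_n$ the problem reduces to computing $M(e_n)$. The off-diagonal entries then vanish by a reflection $\omega_k \mapsto -\omega_k$ for some $k \in \{j, i\} \setminus \{n\}$, which preserves the hemisphere while reversing the sign of $\omega_j \omega_i$. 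The diagonal entries reduce, via the symmetry identity $\int_{\{\omega_n \geq 0\}} \omega_p^2\, d\sigma = |S^{n-1}|/(2n)$, to $|S^{n-1}|/2 - n \cdot |S^{n-1}|/(2n) = 0$.

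With $\Phi \in C^\infty([-1,1])$ vanishing at $-1, 0, 1$, the quotient $\Phi(u)/u$ extends continuously to $[-1, 1]$ and is bounded there by a dimensional constant, so $|I| \leq \int_{-1}^1 |\Phi(u)/u|\, du \leq C(n)$. The hard part is precisely the hemisphere cancellation $\Phi(0) = 0$: without it, the change of variables would produce a divergence of size $\log(r/|c|)$ as $x$ approaches $\partial\Omega$, destroying the uniformity of the bound in $x$. This cancellation is special to the algebraic structure of the Riesz kernel---the trace-free symmetric form $\delta_{j,i} - n\omega_j\omega_i$---and does not follow from the CZ spherical mean-zero property alone.
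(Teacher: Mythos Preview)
Your argument is correct and takes a genuinely different route from the paper's. The paper reduces (after translation and rotation) to $\int_{B_{R_0/2}(0)\cap\{y_n\ge\alpha\}}(R_{j,i})_0$ and then applies Stokes' theorem, writing $R_{j,i}=\partial_i K_j$ to convert the volume integral into boundary integrals of the Newton-type kernel $K_j$ over the spherical cap $\partial B_{R_0/2}\cap\{y_n\ge\alpha\}$ and the flat disk $B_{R_0/2}\cap\{y_n=\alpha\}$; each piece is then bounded by direct computation, in the same template used for the later Lemmas~\ref{unifball} and~\ref{tralari}. You instead stay in polar coordinates and isolate the spherical-cap function $\Phi$, reducing everything to the single cancellation identity $\Phi(0)=0$---the vanishing of the hemisphere integral of the trace-free form $\delta_{ji}-n\omega_j\omega_i$. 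This makes the mechanism completely transparent: without $\Phi(0)=0$ one would indeed get a $\log(\delta(x)/d(x))$ divergence as $x\to\partial\Omega$, exactly as you point out. The Stokes approach hides this cancellation inside the flat-disk boundary term but has the virtue of reusing machinery already in place for the neighboring lemmas.

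Two small cleanups. First, the phrase ``a subinterval of $[-1,1]$ bounded away from $0$'' is not uniform in $x$: the inner endpoint $-c/r$ tends to $0$ as $d(x)\to 0$. Fortunately your next paragraph does not rely on this; the bound $|I|\le\int_{-1}^{1}|\Phi(u)/u|\,du$ uses only the boundedness of $\Phi(u)/u$ on $[-1,1]$, which follows from $\Phi(0)=0$ together with the differentiability of $\Phi$ at $0$. Second, the claim $\Phi\in C^\infty([-1,1])$ is stronger than you need and, for $n=2$, stronger than is true: the slice measure contributes a factor $(1-t^2)^{(n-3)/2}$ to $\Phi'(t)$, which is singular at $\pm1$ when $n=2$. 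All you actually use is that $\Phi$ is $C^1$ in a neighborhood of $0$ and bounded on $[-1,1]$, and both hold for every $n\ge 2$, uniformly in the unit normal $a$.
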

	
	\begin{proof} 
		The cancellation property is used to reduce the estimate to the case of $$\int_{B_{\frac{R_0}{2}}(0)\cap\{y_n\geq\alpha\}}(R_{j,\, i})_0$$ and then Stokes theorem and direct computations provide the estimates.  
		
	\end{proof}
	
	\begin{prop} 
		If $x\in\R^n\setminus\overline{U_{R_0}\cup\Omega}$ then we have
		$$
		L[\phi](x)=0,
		$$ 
		and there exists a constant $C=C(\gamma,\, \Omega,\, R_0)$ such that 
		\begin{multline*}
			|L[\psi](x)|\le C\, (1+\|\psi\|_\gamma) \, C(\psi)\biggl\{\frac{1}{(\max\{R_0^2,\,d(x)^2\})^\frac{n}{2}}\\*[5pt]
			+\frac{1}{\max\{d(x)),\,  \|x\|\}^n}\, \{1+\ln(\max\{d(x),\,  \|x\|\})\biggr\}.
		\end{multline*} 	
		
		
		Here we are assuming that $\|\psi\|_\gamma>0$ and also that $0\in\Omega\setminus U_{R_0}$.	
	\end{prop}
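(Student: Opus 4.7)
Since $x\notin\overline{U_{R_0}\cup\Omega}$ we have $d(x)>R_0>R_0/2$, so $\delta(x)=d(x)$, and the open ball $B_{d(x)}(x)$ lies entirely in $\R^n\setminus\bar\Omega$ by the very definition of $d(x)$. As $\phi$ is supported in $\bar\Omega$, we have $\phi(x)=0$ and $\phi\equiv 0$ on $B_{\delta(x)}(x)$, so the integrand defining $L[\phi](x)$ vanishes identically.

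\textbf{Estimate for $L[\psi](x)$.} My plan is to split the large ball $B_{d(x)}(x)$ into three pieces and estimate each by a different tool, exploiting the two complementary hypotheses on $\psi$: local Hölder regularity, and the far-field decay $|\psi(\zeta)|\le C(\psi)/\delta(\zeta)^n$. First, on a small ball $B_\rho(x)$ around $x$ with $\rho$ an intermediate radius comparable to $R_0$, I would use the sphere-mean cancellation $\int_{B_r(x)}R_{j,i}(\zeta-x)\,dm(\zeta)=0$ (which follows from the explicit formula for the Riesz kernel by parity and the orthogonality of spherical harmonics) to keep the near piece in the form $\int_{B_\rho(x)}(\psi(\zeta)-\psi(x))R(\zeta-x)\,dm$, and then combine the Hölder bound $|\psi(\zeta)-\psi(x)|\le\|\psi\|_\gamma\|\zeta-x\|^\gamma$ (which turns the integrand into a weakly singular one) with the pointwise decay of $\psi(x)$ to extract a contribution of order $C(\psi)(1+\|\psi\|_\gamma)/\max\{R_0,d(x)\}^n$. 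Second, on the annulus $B_{d(x)}(x)\setminus B_\rho(x)$ I would split further according to whether $\zeta$ lies in the tubular shell $A_2=B_{d(x)}(x)\cap U_{R_0}$ or in its complement $A_1$. On $A_2$, every $\zeta$ lies within $R_0$ of $\partial\Omega$ while $x$ lies at distance at least $d(x)>R_0$ from the same boundary, so $\|\zeta-x\|\ge d(x)-R_0$; combined with the boundedness of $\psi$ on the bounded tubular set $U_{R_0}\setminus\bar\Omega$, this yields a clean contribution of order $\|\psi\|_\gamma/d(x)^n$. On $A_1$ the decay bound $|\psi(\zeta)|\le C(\psi)/d(\zeta)^n$ is available, and passing to polar coordinates around $x$ the integral $\int_{A_1}\|\zeta-x\|^{-n}d(\zeta)^{-n}\,dm(\zeta)$ reduces, after the angular integration, to a one-dimensional integral essentially of the form $\int t^{-1}\,dt$, which produces the logarithmic term $(1+\ln\triangle(x))/\triangle(x)^n$.

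\textbf{Main obstacle.} The principal difficulty is the careful bookkeeping across several geometric regimes — $d(x)$ close to $R_0$ versus $d(x)\to\infty$, and $\|x\|$ comparable to the diameter of $\Omega$ versus $\|x\|\to\infty$ — so that the estimate holds uniformly in all of them and the pieces combine to give exactly the stated dependence on $\delta(x)=\max\{R_0/2,d(x)\}$ and $\triangle(x)=\max\{\|x\|,d(x)\}$, with the correct logarithmic factor. A secondary difficulty is that on the near-boundary shell $A_2$ the decay hypothesis on $\psi$ gives no information, so one must rely on the $\mathcal{C}^{1,\gamma}$ regularity of $\partial\Omega$ (through the volume estimate for $A_2$) together with the forced geometric separation of $x$ from $A_2$. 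The argument follows the 2D template of \cite{BurMat}, but the higher-dimensional angular analysis of the Riesz kernel against $1/d(\zeta)^n$ requires noticeably more care.
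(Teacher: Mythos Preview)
The $\phi$-part is correct. For $\psi$, your plan has a genuine gap in the near-ball estimate. If $\rho$ is a \emph{fixed} radius comparable to $R_0$, the H\"older bound alone on $B_\rho(x)$ gives
\[
\Bigl|\int_{B_\rho(x)}(\psi(\zeta)-\psi(x))\,R(\zeta-x)\,dm\Bigr|
\ \le\ \|\psi\|_\gamma\int_{B_\rho(x)}\|\zeta-x\|^{\gamma-n}\,dm
\ \simeq\ \frac{\|\psi\|_\gamma\,\rho^\gamma}{\gamma},
\]
a constant independent of $d(x)$; this cannot produce the required decay $1/d(x)^n$ as $d(x)\to\infty$. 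Using the pointwise decay $|\psi(\zeta)|\le C(\psi)/d(\zeta)^n$ instead fails for the opposite reason: the kernel $\|\zeta-x\|^{-n}$ is not integrable near $x$. The key idea in the paper is to take the inner radius \emph{adaptive},
\[
\alpha(x)\ \simeq\ \Bigl(\max\Bigl\{\frac{|\psi(x)|}{\|\psi\|_\gamma},\ \frac{1}{\max\{R_0,d(x)\}^{n}}\Bigr\}\Bigr)^{1/\gamma},
\]
so that on $B_{\alpha(x)}(x)$ the H\"older estimate gives $\|\psi\|_\gamma\,\alpha(x)^\gamma\lesssim|\psi(x)|\le C(\psi)/d(x)^n$, and only on the remaining annulus $B_{d(x)}(x)\setminus B_{\alpha(x)}(x)$ is the pointwise decay of $\psi$ invoked. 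Without this adaptive crossover the near piece cannot be made to decay, and the ``combination'' you allude to is not a simple multiplication of the two bounds.

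For the far piece $A_1$, the plan of ``polar coordinates around $x$'' is too optimistic: $d(\zeta)$ is not a function of $\|\zeta-x\|$, so the angular integration does not collapse the integrand to a radial one. The paper handles $\int d(\zeta)^{-n}\|\zeta-x\|^{-n}\,dm$ by a layered case analysis: first splitting the annulus $B_{d(x)}(x)\setminus B_{d(x)/2}(x)$ by $\{d(\zeta)\ge\|\zeta-x\|\}$ versus its complement $E_2(x)$, then splitting $E_2(x)$ by a large ball $B_{2M}(0)$ containing $\overline{U_{R_0}\cup\Omega}$, and finally (for $\|x\|\ge 5M$) decomposing the exterior piece into three regions according to whether $\|\zeta\|$ or $\|\zeta-x\|$ dominates. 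It is this last decomposition that produces the factor $\max\{d(x),\|x\|\}^{-n}$ and the logarithm, and the bookkeeping is substantially more involved than a single polar-coordinate reduction.
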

	
	\begin{proof} 
		The case of $\phi$ is immediate.
		
		
		In the case of $\psi$ the integral is performed on the entirely ball of radius $d(x)$ centered at $x$. The choice of $$\alpha(x)\le(\max\{\frac{|\psi(x)|}{\|\psi\|_\gamma},\, \frac{1}{\max\{R_0^2,\,d(x)^2\}}\})^\frac{1}{\gamma}$$ and the decomposition of the integral in two, one performed in $B_{\alpha(x)}(x)$ and the other in $B_{d(x)}(x)\setminus B_{\alpha(x)}(x)$.
		
		The term corresponding to $B_{\alpha(x)}(x)$ is estimated by $$ \frac{C(n)}{\gamma}\, \max\{|\psi(x)|,\, \frac{\|\psi\|_\gamma}{(\max\{R_0^2,\,d(x)^2\})^\frac{n}{2}}\}$$ in a direct way.
		
		The term corresponding to $B_{d(x)}(x)\setminus B_{\alpha(x)}(x)$ can be decomposed in three integrals. One over $B_{d(x)}(x)\cap U_{\frac{R_0}{2}}$, another over $B_{d(x)}(x)\cap (U_{R_0}\setminus U_{\frac{R_0}{2}})$ and the third one over $(B_{d(x)}(x)\setminus B_{\alpha(x)}(x))\setminus U_{R_0}$. Frome those the first one has an estimate of type $\frac{\|\psi\|_\infty}{(\max\{R_0^2,\,d(x)^2\})^\frac{n}{2}}$ also in a direct way. The second one has the same type of estimate whenever $d(x)\geq\frac{3\, R_0}{2}$.
		
		If $R_0<d(x)<\frac{3\, R_0}{2}$ the second term is estimated in terms of $\frac{\|\psi\|_\gamma}{(\max\{R_0^2,\,d(x)^2\})^\frac{n}{2}}.$
		
		For the third term in the previous decomposition one uses the estimate 
		$|\psi(x)|\le\frac{C(\psi)}{(\max\{R_0^2,\,d(x)^2\})^\frac{n}{2}}$ in the new decomposition: $$(B_{d(x)}(x)\setminus B_{\alpha(x)}(x))\setminus U_{R_0}=\biggl(B_{\frac{d(x)}{2}}(x)\setminus B)_{\alpha(x)}(x)\biggr)\cup\biggl(B_{d(x)}(x)\setminus B_{\frac{d(x)}{2}}(x)\, )\setminus U_{R_0}\biggr).$$ The integral in the first set in the decomposition is estimated by  
		$$\frac{C(\psi)}{d(x)^n}\, \{\ln(d(x)\, \|\psi\|_\gamma^{\frac{1}{\gamma}})+\, \ln(2\, |\psi(x)|^{\frac{1}{\gamma}})\},
		$$ in the case of $\frac{|\psi(x)|}{\|\psi\|_\gamma}\geq\frac{1}{(\max\{R_0^2,\,d(x)^2\})^\frac{n}{2}},$
		and by 
		$$\frac{C(\psi)}{d(x)^n}\, \{\ln(d(x))+\ln(2\, \max\{R_0^2,\,d(x)^n\}^{\frac{1}{\gamma}}\}$$ in the opposite case.

		For the term $(VIII)$, we consider 
		$$
		(B_{d(x)}(x)\setminus B_{\frac{d(x)}{2}}(x)\, )\setminus U_{R_0}=E_1(x)\cup E_2(x),
		$$ 
		where $E_1(x)$ and $E_2(x)$ are the intersection of the domain of integration with the set 
		$
		\{\zeta\in\R^n:\, d(\zeta)\geq
		\|\zeta-x\|\}
		$ 
		or its complement, respectively. Then the integral over $E_1(x)$ is bounded directly by 
		$\frac{C(\psi)}{d(x)^n}.
		$
		
		
		For the integral over $E_2(x)$ we have to take in account the position of $x$ with respect to $\Omega$. For doing so, let 
		$
		M=\max\{\|y\|:y\in\overline{(U_{R_0}\cup\Omega)}\}
		$ and then consider the decomposition $$E_2(x)=(E_2(z)\cap B_{2M}(0)\, )\cup(E_2(z)\setminus B_{2M}(0)\, ).$$
		
		And the integral over the first set above can be directly estimated by
		$\frac{C(\psi)}{d(x)^n}.$
		
		The integral over the second set can be controlled by  
		$$
		C(\psi)\,  \int_{(B{d(x)}(x)\setminus B_{\frac{d(z)}{2}}(x)\, )\setminus B_{2M}(0)}\frac{dm(\zeta)}{\|\zeta\|^n\, \|\zeta-x\|^n}.
		$$ and then, in the case $\|x\|\geq 5M$, we have $d(x)\geq 3M$ and for $r<3M$, the balls $B_{2M}(0)$ and $B_{\frac{d(z)}{2}}(x)$ are 
		mutually disjoint and we consider then the decomposition in disjoint sets $(B{d(x)}(x)\setminus B_{\frac{d(z)}{2}}(x)\, )\setminus B_{2M}(0)=A_1\cup A_2\cup A_3,
		$ 
		where
		\begin{align*}
			A_1&=C^{d(x)}_{\frac{d(x)}{2}}(x)\setminus B_{2\|x\|}(0),\\*[5pt]
			A_2&=\{\zeta\in B_{2\|x\|}(0)\setminus B_{2M}(0):\, \|\zeta\|\le \|\zeta-x\|\},
			\intertext{and}
			A_3&=\{\zeta\in B_{2\|x\|}(0)\setminus B_{\frac{d(x)}{2}}(x):\, \, \|\zeta\|\geq \|\zeta-x\|\}.
		\end{align*}
		
		Then the integral over
		$A_1$ is bounded by 
		$
		\frac{C}{\|x\|^n},
		$ the integral over $A_2$ is bounded by 
		$
		\frac{C}{\|x\|^n}\, \ln\biggl(\frac{\|x\|}{M}\biggr)
		$ and the integral in $A_3$ is bounded by 
		$\frac{C}{\|x\|^n}\, \ln\biggl(C\, \frac{\|x\|}{d(x)}\biggr).$

		If $\|x\|\leq 5M$, then the set 
		$(B{d(x)}(x)\setminus B_{\frac{d(z)}{2}}(x)\, )\setminus B_{2M}(0)=A_1\cup A_2\cup A_3,
		$ decomposes in a disjoint union of the resulting intersection with the set 
		$
		\{\|\zeta\|>\|\zeta-x\|\}
		$ 
		and its complement, namely $G_1$ and $G_2$. The corresponding integrals are bounded by $\frac{C}{d(x)^n}
		$
		and 
		$
		\frac{C}{\|x\|^n}
		$ respectively.

	\end{proof}

	2) {\bf Estimates for $Q$:}

	\begin{prop} 
		There exists a constant $C=C(\Omega,\, R_0)$ such that if $x\in\Omega\cup U_{R_0}$, we have 
		$$
		|Q[\phi](x)|\le C\, \|\phi\|_\infty
		$$ 
		and 
		$$
		|Q[\psi](x)|\le C\, (\|\psi\|_\infty+C(\psi)).
		$$	
	\end{prop}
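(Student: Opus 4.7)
The plan is to exploit the fact that for $x\in\Omega\cup U_{R_0}$ we have $\delta(x)\ge R_0/2$, so the kernel $R_{j,i}$ appearing in $Q[f](x)$ is integrated over a region where $\|\zeta-x\|\ge R_0/2$. There the pointwise bound $|R_{j,i}(\zeta-x)|\le C_n\,\|\zeta-x\|^{-n}$ coming from \eqref{riesz} is all we need; no cancellation argument is required. Fix once and for all a radius $M$ with $\overline{\Omega\cup U_{R_0}}\subset B_M(0)$; this $M$ depends only on $\Omega$ and $R_0$.

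For $\phi$, the integrand is supported in $\bar\Omega\subset B_M(0)$, so
\[
|Q[\phi](x)|\le \|\phi\|_\infty\int_{\bar\Omega\cap\{\|\zeta-x\|\ge R_0/2\}}\frac{C_n}{\|\zeta-x\|^n}\,dm(\zeta)
\le C_n\,\|\phi\|_\infty\int_{R_0/2}^{2M}\frac{dr}{r},
\]
which is bounded by a constant depending only on $\Omega$ and $R_0$. This is the first inequality.

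For $\psi$, I would split the domain of integration into an inner and an outer piece: set
\[
Q[\psi](x)=\int_{\{R_0/2\le\|\zeta-x\|\le 2M\}\setminus\bar\Omega}\psi\,(R_{j,i})_x+\int_{\|\zeta-x\|\ge 2M}\psi\,(R_{j,i})_x.
\]
The inner integral is estimated exactly as in the $\phi$ case (since $|\psi|\le\|\psi\|_\infty$ and the integration is over a bounded annulus), yielding a bound $C(\Omega,R_0)\,\|\psi\|_\infty$. For the outer integral, I will use the hypothesis $|\psi(\zeta)|\le C(\psi)/\delta(\zeta)^n$. On $\{\|\zeta-x\|\ge 2M\}$ the point $\zeta$ lies well outside $B_M(0)\supset\overline{\Omega\cup U_{R_0}}$, so $\delta(\zeta)=d(\zeta)\ge \|\zeta\|-M\ge\|\zeta\|/2$ and also $\|\zeta-x\|\ge\|\zeta\|/2$. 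Plugging these two comparisons in gives an integrand majorized by $C\,C(\psi)\,\|\zeta\|^{-2n}$, whose integral over $\{\|\zeta\|\ge M\}$ is a finite absolute constant times $C(\psi)\,M^{-n}$.

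The only delicate step is the outer part, and the delicacy is mild: one must check that the growth hypothesis on $\psi$ combines with the decay of the Calderón--Zygmund kernel to produce an integrable majorant. Because of the factor $\|\zeta-x\|^{-n}$ coming from $R_{j,i}$ and the factor $\delta(\zeta)^{-n}$ coming from $\psi$, the product decays like $\|\zeta\|^{-2n}$, which is integrable at infinity in $\R^n$ for every $n\ge 2$ with room to spare. Summing the inner and outer estimates yields $|Q[\psi](x)|\le C(\Omega,R_0)\,(\|\psi\|_\infty+C(\psi))$, which is the second inequality.
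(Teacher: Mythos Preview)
Your argument is essentially correct and follows the same strategy as the paper---use $\|f\|_\infty$ on a bounded region and the decay hypothesis on $\psi$ for the tail---but your implementation is cleaner: the paper first splits into the cases $x\in\Omega\setminus U_{R_0}$ and $x\in U_{R_0}$, then decomposes the domain of integration according to the geometry of $\Omega$ and $U_{R_0}$, and for the unbounded piece further partitions into $E_1=\{d(\zeta)\ge\|\zeta-x\|\}$ and its complement $E_2$. You bypass all of this by observing once that $\delta(x)\ge R_0/2$ for every $x\in\Omega\cup U_{R_0}$ and splitting only by $\|\zeta-x\|$; the resulting $\|\zeta\|^{-2n}$ majorant makes the tail transparent. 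The paper's finer decomposition is not needed for this proposition, though it mirrors the structure used in the neighbouring estimates.

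One small numerical slip: from $\|\zeta-x\|\ge 2M$ and $\|x\|\le M$ you only get $\|\zeta\|\ge M$, which does \emph{not} give $\|\zeta\|-M\ge\|\zeta\|/2$. Either split the outer region at $\|\zeta-x\|\ge 3M$ (so that $\|\zeta\|\ge 2M$ and both comparisons $d(\zeta)\ge\|\zeta\|/2$ and $\|\zeta-x\|\ge\|\zeta\|/2$ hold), or handle the bounded shell $\{\|\zeta-x\|\ge 2M\}\cap\{M\le\|\zeta\|\le 2M\}$ separately using $\delta(\zeta)\ge R_0$ there. Either fix is immediate and the conclusion stands.
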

	
	\begin{proof} We consider two cases:
		
		A) In the case of $x\in\Omega\setminus U_{R_0}$, we have $\delta(x)=d(x)\geq R_0$ and 
		then $Q[\phi](x)$ is controlled in terms of $\|\phi\|_\infty$.
		
		For $Q[\psi](x)$ the domain of integration $\R^n\setminus B_{d(x)}(x)$ can be changed for free in the domain $\R^n\setminus\Omega= \biggl((\R^n\setminus\Omega)\cap U_{R_0}\biggr)\cup\biggl(\R^n\setminus(\Omega\cup U_{R_0})\biggr)$
		and the integral over the first domain has the same control as $Q[\phi](x)$.
		
		
		The integral over the second one is controlled by 
		$$
		C(\psi)\, \int_{\R^n\setminus(\Omega\cup U_{R_0})}\frac{dm(\zeta)}{\max\{R_0^2,\,d(\zeta)^2\}^\frac{n}{2}\, \|\zeta-x\|^n},
		$$ 
		and we have the decomposition 
		$
		\R^n\setminus(\Omega\cup U_{R_0})=E_1\cup E_2,
		$ 
		where $E_1$ is the intersection of $\R^n\setminus(\Omega\cup U_{R_0})$ with 
		$
		\{d(\zeta)\geq \|\zeta-x\|\},
		$ 
		and $E_2$ with the complement.
		
		In both cases the integral is bounded by constants depending only on the geometry of $\Omega$ and the corresponding term is controlled by $C(\psi).$

		B) In the case of $x\in U_{R_0}$, we have the decomposition
		$$\R\setminus B_{R_0}(x)=\biggl(\R\setminus (B_{R_0}(z)\cup U_{R_0}\cup\Omega)\biggr)\cup\biggl((U_{R_0}\cup\Omega)\setminus B_{R_0}(z)\biggr).$$

		
		In the case of $f=\phi$, the integral over the first region is equal to  
		$0$. In the case of $f=\psi$ is bounded in terms of $C(\psi)$.
		
		The integral over the second region is bounded in terms of  
		$\|f\|_\infty
		$ 
		in all cases.
		

	\end{proof}
	
	\begin{prop} 
		There exists a constant $C=C(\Omega,\, R_0)$ such that if  $x\in\R^n\setminus\overline{(\Omega\cup U_{R_0})}$, then 
		$$
		|Q[\phi](x)|\le\, C\, \|\phi\|_\infty\, \frac{1}{\max\{R_0^2,\,d(x)^2\}^\frac{n}{2}}
		$$ 
		and 
		$$
		|Q[\psi](x)|\le\, C\biggl \{\|\psi\|_\infty\, \frac{1}{\delta(x)^n}+C(\psi)\, \frac{1}{\max\{\|x\|^2,\,d(x)^2\}^\frac{n}{2}}\, (1+\ln\max\{\|x\|,\,d(x)\})\biggr\}.
		$$	
	\end{prop}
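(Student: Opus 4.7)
The plan is to exploit the fact that, for $x\in\R^n\setminus\overline{(\Omega\cup U_{R_0})}$, we have $d(x)>R_0$, hence $\delta(x)=d(x)$ and $\max\{R_0^2,d(x)^2\}^{n/2}=d(x)^n$.  Since the integration in $Q[f](x)$ takes place at distance at least $d(x)$ from $x$, no cancellation of the Riesz kernel is required; throughout, I will rely only on the pointwise bound $|R_{j,i}(\zeta-x)|\le C/\|\zeta-x\|^n$.

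For $\phi$, whose support is contained in $\bar\Omega$, every $\zeta$ in the support satisfies $\|\zeta-x\|\ge d(x)$, which immediately gives $|Q[\phi](x)|\le C\,|\Omega|\,\|\phi\|_\infty/d(x)^n$, the asserted estimate.  For $\psi$, supported in $\R^n\setminus\bar\Omega$, I would split the integration domain as $W_1\cup W_2$ with $W_1=(U_{R_0}\setminus\bar\Omega)\setminus B_{d(x)}(x)$ and $W_2=(\R^n\setminus(\Omega\cup U_{R_0}))\setminus B_{d(x)}(x)$.  On $W_1$, whose volume is bounded in terms of $\Omega$ and $R_0$, the crude bound $|\psi(\zeta)|\le\|\psi\|_\infty$ together with $\|\zeta-x\|\ge\delta(x)$ yields a contribution of size $C\,\|\psi\|_\infty/\delta(x)^n$.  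On $W_2$, the decay hypothesis $|\psi(\zeta)|\le C(\psi)/d(\zeta)^n$ reduces the matter to estimating the purely geometric integral
$$I(x):=\int_{W_2}\frac{dm(\zeta)}{d(\zeta)^n\,\|\zeta-x\|^n}.$$

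To control $I(x)$, set $M=\max\{\|y\|:y\in\overline{\Omega\cup U_{R_0}}\}$ and $\ell=\max\{\|x\|,d(x)\}$, and mimic the geometric decomposition already used in the estimate for $L[\psi]$.  The far piece $\{\zeta\in W_2:\|\zeta\|\ge 2\max\{\|x\|,M\}\}$ satisfies $d(\zeta)\gtrsim\|\zeta\|$ and $\|\zeta-x\|\gtrsim\|\zeta\|$, so integration of $\|\zeta\|^{-2n}$ over it produces a term $\le C/\ell^n$.  The remaining bounded piece is subdivided into regions analogous to $A_1,A_2,A_3$ in the $L[\psi]$ argument, according to whether $\|\zeta-x\|\gtrsim\ell$ or $\|\zeta\|\gtrsim\|\zeta-x\|$; the logarithmic factor $1+\ln\ell$ then emerges precisely in the region where $\|\zeta\|$ and $\|\zeta-x\|$ remain comparable over a range of scales, from an integral of the form $\int dr/r$ on the corresponding annulus of inner radius $\sim d(x)$ and outer radius $\sim\ell$.

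The main obstacle is this last piece of geometric bookkeeping: patching the various regions together uniformly in the position of $x$, especially in the intermediate regime where $\|x\|\sim M$ and neither the regime ``$x$ close to $\Omega$'' nor the regime ``$x$ far from $\Omega$'' is unambiguously dominant.  Since the decomposition machinery and the sharp estimates (including the logarithm) have already been developed for $L[\psi]$ in the preceding proposition, the execution of the present proof amounts to transplanting that argument, with the annulus $B_{d(x)}(x)\setminus B_{\alpha(x)}(x)$ appearing there replaced by the exterior $\R^n\setminus B_{d(x)}(x)$ here, and with the role of the Hölder bound on $\psi$ inside $B_{d(x)}(x)$ replaced by the decay hypothesis on $\psi$ outside $U_{R_0}\cup\Omega$.
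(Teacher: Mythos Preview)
Your proposal is correct and follows essentially the same approach as the paper: the trivial bound for $\phi$, the split of the $\psi$-integral into the bounded piece $W_1=U_{R_0}\setminus(\bar\Omega\cup B_{d(x)}(x))$ (handled by $\|\psi\|_\infty$) and the exterior piece $W_2$ (reduced via the decay hypothesis to the geometric integral $I(x)$), and then the $A_1,A_2,A_3$-type decomposition of $I(x)$ transplanted from the $L[\psi]$ estimate. The only minor difference is that the paper first peels off the region $E_1=\{d(\zeta)\ge\|\zeta-x\|\}$ (contributing $C/d(x)^n$ directly) before carrying out the $A_i$/$G_i$ decomposition on its complement $E_2$, and handles the intermediate regime by the explicit dichotomy $\|x\|\ge 5M$ versus $\|x\|\le 5M$; your outline folds this into the general bookkeeping but the content is the same.
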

	
	\begin{proof}
		If $x\in\R^n\setminus(\Omega\cup U_{R_0})$, then the integral for $\phi$ is in fact an integral on 
		$
		\Omega\setminus B_{\delta(x)}(x)
		$ 
		and is bounded in terms of 
		$
		\|\phi\|_\infty\, \frac{1}{\delta(x)^n},
		$ 
		leading to the statement.
		
		
		In the case of $\psi$ we use the decomposition of the domain \newline
		$\R^n\setminus(\Omega\cup B_{\delta(x)}(x))=\biggl(\R^n\setminus(\Omega\cup U_{R_0}\cup B_{\delta(x)}(x))\biggr)\cup\biggl(U_{R_0}\setminus(\Omega\cup B_{\delta(x)}(x))\biggr)$

		
		The integral over the second region is controlled in terms of 
		$
		\|\psi\|_\infty\, \frac{1}{\delta(x)^n}.
		$
		
		
		For integral over the first region we have the control
		$$C(\psi)\, \int_{\R^n\setminus(\Omega\cup U_{R_0}\cup B_{\delta(x)}(x))}\frac{dm(\zeta)}{\max\{R_0^2,\,d(\zeta)^2\}^\frac{n}{2}\, \|\zeta-x\|^n},
		$$ 
		and we consider 
		$
		\R^n\setminus(\Omega\cup B_{d(x)}(x))=E_1(x)\cup E_2(x),
		$ 
		where $E_1(x)$ and $E_2(x)$ are the intersection of the domain of integration with the set \newline
		$
		\{\zeta\in\R^n:\, d(\zeta)\geq \|\zeta-x\|\}
		$ 
		or its complement. Then the integral over $E_1(x)$ is controlled in terms of 
		$
		\frac{1}{d(x)^n}.
		$
		
		
		The integral over $E_2(x)$ is more involved. We use the decomposition \newline
		$
		E_2(x)=(E_2(z)\cap B_{2M}(0)\, )\cup (E_2(z)\setminus B_{2M}(0)\, )
		$. The integral over the second region above is controlled by
		$$
		\int_{\R^n\setminus(B_{2M}(0)\cup B_{\frac{d(x)}{2}}(x))}\frac{dm(\zeta)}{\|\zeta\|^n\, \|\zeta-x\|^n}.
		$$
		
		
		For the other integrals, if $\|x\|\geq 5M$, then arguing as in previous proofs in this section, we consider the decomposition in disjoint sets \newline 
		$
		\R^n\setminus(B_{2M}(0)\cup B_{\frac{d(x)}{2}}(x))=A_1\cup A_2\cup A_3,
		$ 
		where
		\begin{align*}
			A_1&=\R^n\setminus B_{2\|x\|}(0),\\*[5pt]
			A_2&=\{\zeta\in B_{2\, \|x\|}(0)\setminus B_{2M}(0):\, \|\zeta\|\le \|\zeta-x\|\},
			\intertext{and} 
			A_3&=\{\zeta\in B_{2\, \|x\|}(0)\setminus B_{\frac{d(x)}{2}}(x):\, \, \|\zeta\|\geq \|\zeta-x\|\},
		\end{align*}
		and the integral over $A_1$ is bounded in terms of $\frac{1}{\|x\|^n},
		$. For the integral over $A_2$, we have the bound in terms of  
		$
		\frac{1}{\|x\|^n}\, \ln\biggl(\frac{\|x\|}{M}\biggr),
		$ and for $\zeta\in A_3$ we have the estimate in terms of  $ 
		\frac{1}{\|x\|^n}\, \ln\biggl(\frac{8\, \|x\|}{d(x)}\biggr).$

		If $\|x\|\leq 5M$, then the set 
		$
		\R^n\setminus(B_{2M}(0)\cup B_{\frac{d(x)}{2}}(x))
		$ 
		decomposes in a disjoint union of the resulting intersection with the set 
		$
		\{\|\zeta\|>\|\zeta-x\|\}
		$ 
		and its complement, namely $G_1$ and $G_2$. The integrals over these regions are bounded respectively by 
		$
		\frac{1}{d(x)^n}
		$ 
		and 
		$
		\frac{1}{\|x\|^n}.
		$

		Finally, the integral over 
		$E_2(z)\cap B_{2M}(0)$ is bounded in terms of $\frac{1}{d(x)^n}$ in all cases.
		
	\end{proof}

	\subsection{Lipschitz estimates} 
	
	We assume the same hypotheses, definitions and notation of the previous sections and subsections.
	

	\begin{prop}\label{thm2} Consider $\phi$ and~$\psi$ satisfying the conditions of Theorem \ref{thmR}.
		
		There exists a constant $C=C(\gamma,\, \Omega,\, R_0)$ such that for $f=\phi$ or $\psi$  and $(x,y)\in\Omega\times\Omega$, or $(x,y)\in(\R^n\setminus\bar\Omega)\times(\R^n\setminus\bar\Omega)$ we have that,   
		$$
		\frac{| R[f](x)- R[f](y)|}{\|x-y\|^\gamma}\le C\, \|f\|_\gamma.
		$$ 
	\end{prop}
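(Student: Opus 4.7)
\textbf{Proposal of proof of Proposition \ref{thm2}.}

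The plan is to exploit the decomposition $R[f]=Q[f]+L[f]+f\cdot\Theta^{R_0/2}_{W;j,i}$ from Proposition \ref{desing}, noting that when $(x,y)$ lies entirely in the open set $\Omega$ or entirely in $\R^n\setminus\bar\Omega$, one has $d(x),d(y)>0$ and hence $\Theta^{R_0/2}_{W;j,i}(x)=\Theta^{R_0/2}_{W;j,i}(y)=0$. Thus $R[f](x)-R[f](y)=(Q[f](x)-Q[f](y))+(L[f](x)-L[f](y))$, and it suffices to bound each difference by $C\|f\|_\gamma\|x-y\|^\gamma$. A preliminary reduction: if $h:=\|x-y\|\ge R_0/8$, then the uniform bound already established in parts (i)--(iv) of Theorem~\ref{thmR} yields $|R[f](x)-R[f](y)|\le 2\|R[f]\|_\infty\le C\|f\|_\gamma\le C\|f\|_\gamma(h/R_0)^\gamma\cdot(R_0/h)^\gamma$, absorbing the factor into the constant. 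So we may assume $h<R_0/8$; then both truncation balls $B_{\delta(x)}(x)$, $B_{\delta(y)}(y)$ have radius at least $R_0/2$ and overlap substantially.

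For the $Q$-piece, I would split
\[
Q[f](x)-Q[f](y)=\int_{\R^n\setminus(B_{\delta(x)}(x)\cup B_{\delta(y)}(y))}f(\zeta)\,[R(\zeta-x)-R(\zeta-y)]\,dm(\zeta)+\mathcal{E}_x-\mathcal{E}_y,
\]
where $\mathcal{E}_x$, $\mathcal{E}_y$ are the thin-shell corrections coming from the symmetric difference of the two truncation balls. On the main domain the standard Calder\'on--Zygmund smoothness estimate $|R(\zeta-x)-R(\zeta-y)|\le Ch/\|\zeta-x\|^{n+1}$ (valid because $\|\zeta-x\|\ge R_0/2\gg h$) yields a Lipschitz bound against $\|f\|_\infty$. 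The shell corrections $\mathcal{E}_x,\mathcal{E}_y$ are integrals of the CZ kernel over regions of thickness $|\delta(x)-\delta(y)|\le h$ lying at distance $\ge R_0/2$ from $x$ (respectively $y$); writing them with the annular cancellation already used in Lemma \ref{rszgl} gives a bound of order $h/R_0^n\cdot\|f\|_\infty$, again stronger than the required $h^\gamma$. For $f=\psi$ the decay $|\psi|\le C(\psi)/\delta^n$ in the far field, combined with the computations already organized in the uniform estimates above, controls the tail contribution.

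For the $L$-piece I would further split the domain of integration. Put $r=\max\{2h,h^{1/(1-?)}\}$; more concretely, I take $r=2h$ and write
\[
L[f](x)-L[f](y)=\Bigl(\int_{B_{\delta(x)}(x)\cap B_r(x)}-\int_{B_{\delta(y)}(y)\cap B_r(x)}\Bigr)+\int_{\text{(far part)}}.
\]
On $B_r(x)$ (of radius $2h$) the integrand $\frac{f(\zeta)-f(\cdot)}{\|\zeta-\cdot\|^\gamma}\|\zeta-\cdot\|^\gamma R_{j,i}$ is weakly singular of order $n-\gamma$, and direct integration bounds the contribution by $C\|f\|_\gamma h^\gamma$. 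On the far part $B_{\delta(x)}(x)\setminus B_r(x)$ (suitably symmetrized with $y$) one adds and subtracts $f(x)$ and $f(y)$: the kernel difference $R(\zeta-x)-R(\zeta-y)$ picks up a factor $h/\|\zeta-x\|^{n+1}$, which against $|f(\zeta)-f(x)|\le\|f\|_\gamma\|\zeta-x\|^\gamma$ is integrable and yields $C\|f\|_\gamma h$; the leftover piece $(f(y)-f(x))\int R(\zeta-y)\,dm$ costs at most $\|f\|_\gamma h^\gamma$ by the cancellation lemmas (Lemmas \ref{Aux1}, \ref{Aux2}) applied to the relevant truncated ball. The shell coming from $\delta(x)\ne\delta(y)$ is again of thickness $h$ and weakly singular, hence harmless.

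The main obstacle will be the bookkeeping when $x$ or $y$ lies in $U_{R_0}\setminus\Omega$ (or just inside $\Omega$ near $\partial\Omega$) so that $\delta=R_0/2$ is active and the relevant truncation ball crosses the boundary: then the region $B_{\delta(x)}(x)\setminus B_r(x)$ contains points on \emph{both} sides of $\partial\Omega$, where $f$ might vanish on one side. This is exactly where the geometric Lemma \ref{geom} enters, providing the $\ka{1,\gamma}$ graph description of $\partial\Omega$ near $x$ that allows one to split the ball into the four regions $\{\kappa_\tau\gtrless 0\}\cap\{\rho\gtrless 0\}$ used in the estimate of $L$ and to apply Lemma \ref{Aux1} to convert the configuration into a weakly singular integrand. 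Once this reduction is in place, the estimate proceeds exactly as in \cite{BurMat}, the only new ingredient being the higher-dimensional Stokes computation already packaged in Lemma \ref{Aux2}.
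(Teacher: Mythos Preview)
Your overall architecture matches the paper's: drop the $\Theta$-term since $d(x),d(y)>0$, reduce to $h=\|x-y\|$ small, and estimate the $Q$- and $L$-differences separately. The treatment of $(Q[f](x)-Q[f](y))$ is essentially identical to the paper's term~$(1)+(2)$, and your near/far splitting of the $L$-piece at scale $\sim h$ reproduces the paper's decomposition into $(31)$, $(321)$, $(322)$, $(4)$.

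There is, however, a genuine gap in the boundary-crossing case that you flag but do not resolve. When $x,y\in U_{R_0/2}\cap W$ the far region $(B_{\delta(x)}(x)\cap B_{\delta(y)}(y))\setminus B_{2h}(\tau)$ meets $W^c$, and on that piece $f(\zeta)=0$. Your identity
\[
[f(\zeta)-f(x)]\,(R)_x-[f(\zeta)-f(y)]\,(R)_y=[f(\zeta)-f(x)]\bigl((R)_x-(R)_y\bigr)+[f(y)-f(x)]\,(R)_y
\]
then gives a first term equal to $-f(x)\bigl((R)_x-(R)_y\bigr)$, and the bound $|f(\zeta)-f(x)|\le\|f\|_\gamma\|\zeta-x\|^\gamma$ you invoke \emph{fails} there: it is simply $|f(x)|$, with no decay in $\|\zeta-x\|$. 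Integrating the bare kernel difference $h/\|\zeta-x\|^{n+1}$ over a region that can come within distance $d(x)\ll h$ of $x$ is divergent, so an extra cancellation mechanism is required. The paper isolates exactly this contribution as term $(352)$ and proves a separate lemma bounding
\[
\int_{B_{R_0/2}(y)\cap B_{R_0/2}(x)\cap W^c}\bigl[(R)_y-(R)_x\bigr]\,dm(\zeta)\le C\,\|x-y\|^\gamma,
\]
which ultimately relies on the $\ka{1,\gamma}$ graph description of $\partial\Omega$ together with the Main Lemma of \cite{MOV}. This is a genuine two-point estimate; Lemmas~\ref{Aux1} and~\ref{Aux2} are single-point uniform bounds on lens integrals and do not yield it. Likewise, for the piece $[f(y)-f(x)]\int(R)_y$ over the far region, the paper needs the dedicated Lemmas~\ref{unifball} and~\ref{tralari} (again proved via Stokes and the boundary geometry), not Aux1/Aux2. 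Your final paragraph correctly locates the obstacle but the tools you name are not the ones that close it.
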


	\begin{rmk} As we will see,
		the theorem implies that $R_{j,\, i}[f]$ have a Lipschitz extension to $\bar\Omega$ and also to $\R^n\setminus\Omega$ but these extensions differ in a jump along $\partial\Omega$.
	\end{rmk}
	
	\subsubsection{Proof of Proposition \ref{thm2}:} Again we follow the ideas of the corresponding statement in  \cite{BurMat}. 
	
	We will estimate the quotients in the left hand side of the inequality in the statement above, only in the case of $\|x-y\|\le\frac{R_0}{4}$. 
	
	Most of the proof goes along for $\phi$ or $\psi$ with no distinction, so unless it be necessary, we will use $f$ for $\phi$ or $\psi$.

	The goal is estimating $\frac{| R[f](x)- R[f](y)|}{\|x-y\|^\gamma}$ for $x,\, y\in\Omega$ and for $x,\, y\in\R^n\setminus\bar\Omega$, using the decomposition of the singularities in Proposition \ref{desing}. In fact it is enough to estimate 	
	$$\frac{| Q[f](x)- Q[f](y)|}{\|x-y\|^\gamma}
	+\frac{| L[f](x)- L[f](y)|}{\|x-y\|^\gamma}$$ because the boundedness of the $\gamma$-Lipschitz norm of the boundary term is easier and is left to the end of the section.


	
	The first term
	\begin{equation*}
		\begin{split}
			Q[f](x)-Q[f](y)
			&=\int_{\R^n\setminus (B_{\delta(x)}(x)\cup B_{\delta(y)}(y))}f\, \{(R)_x-(R)_y\}\\*[5pt]
			&\quad+
			\int_{B_{\delta(y)}(y)\setminus B_{\delta(x)}(x)}f\, (R)_x-\!\int_{B_{\delta(x)}(x)\setminus B_{\delta(y)}(y)}f\,(R)_y
			\!=\!(1)+(2)\!
		\end{split}
	\end{equation*}
	and he second term
	\begin{equation*}
		\begin{split}
			L[f](y)-L[f](x)
			&=\int_{B_{\delta(x)}(x)\cap B_{\delta(y)}(y)}\biggl[\frac{f(\zeta)-f(y)}{\|\zeta-y\|^\gamma}\, 
			\|\zeta-y\|^\gamma\, (R)_y(\zeta)\\*[5pt]
			&\hspace*{4cm}-\frac{f(\zeta)-f(x)}{\|\zeta-x\|^\gamma}\, 
			\|\zeta-x\|^\gamma\, (R)_x(\zeta)\biggr]\\*[5pt]
			&\quad+\int_{B_{\delta(y)}(y)\setminus B_{\delta(x)}(x)}\frac{f(\zeta)-f(y)}{\|\zeta-y\|^\gamma}\, 
			\|\zeta-y\|^\gamma\, (R)_y(\zeta)\\*[5pt]
			&\quad-
			\int_{B_{\delta(x)}(x)\setminus B_{\delta(y)}(y)}\frac{f(\zeta)-f(x)}{\|\zeta-x\|^\gamma}\, 
			\|\zeta-x\|^\gamma\, (R)_x(\zeta)\\*[5pt]
			&=(3)+(4).
		\end{split}
	\end{equation*}
	
	\noindent
	{\bf 1.} The integral $(1)$, after a straightforward computation and use of the mean value theorem we have  
	$$(1)=\|y-x\|\, \int_{\R^n\setminus (B_{\delta(y)}(y)\cup B_{\delta(x)}(x))}f(\zeta)\, (\nabla_x(R)_{\alpha\, x+(1-\alpha)\, y}(\zeta),\, \frac{x-y}{\|y-x\|}\, )\, dm(\zeta)$$
	and if we consider $\tau=\frac{x+y}{2}$ and then $\tau-x
	=\frac{y-x}{2}\overset{\text{def}}{=}a$, $\tau-y=-\frac{y-x}{2}=-a$ and 
	$$
	\lambda=\sqrt{\frac{\delta(x)^2-\|a\|^2+\delta(y)^2-\|a\|^2}{2}},
	$$ 
	then $B_{\lambda}(\tau)\subset B_{\delta(x)}(x)\cup B_{\delta(y)}(y)$, and, since $f$ is a bounded function, then the integral above is
	bounded in terms of  
	$$\|f\|_\infty\,  \int_\lambda^\infty \frac{1}{(r-\|a\|)^{2}}\, dr\simeq\, \|f\|_\infty\, \frac{1}{\lambda-\|a\|}\simeq\frac{C\, \|f\|_\infty}{R_0},$$ for  $\lambda-\|a\|>5\, \|a\|\geq C\, R_0$.

For the remaining terms, both in the decomposition of $Q$ and $L$,  the absolute and mutual positions of $x$ and $y$, specially in relation to $\partial\Omega$, will play an important role. For this purpose, we will consider the following situations:

\begin{itemize}
	\item[1)] The cases of $B_{\delta(y)}(y)\subset B_{\delta(x)}(x)$, or $B_{\delta(x)}(x)\subset B_{\delta(y)}(y)$, corresponding respectively to the facts that $\delta(y)+\|x-y\|\le\delta(x)$ or $\delta(x)+\|x-y\|\le\delta(y)$, so is 
	$$
	\frac{\|x-y\|}{\delta(x)+\delta(y)}\le 1-\frac{2\, \delta(y)}{\delta(x)+\delta(y)}
	$$ 
	or 
	$$\frac{\|x-y\|}{\delta(x)+\delta(y)}\le 1-\frac{2\, \delta(x)}{\delta(x)+\delta(y)},
	$$ 
	respectively.
	
	The situations are completely symmetric and in each case only one term in $(2)$ survives.

	\item[2)] The case where the conditions 
	$$
	\begin{cases} 
		B_{\delta(y)}(y)\cap B_{\delta(x)}(x)\neq\emptyset ,\\ 
		B_{\delta(y)}(y)\cap B_{\delta(x)}(x)^c\neq \emptyset,
	\end{cases}
	$$ 
	or reciprocally, are both satisfied.
	
	In this case, we have 
	$$
	\begin{cases} \delta(x)+\delta(x)\geq\|x-y\|, \\
		\delta(x)\le\|x-y\|+\delta(y),
	\end{cases}
	$$ 
	or 
	$$
	1-\frac{2\, \delta(y)}{\delta(x)+\delta(y)}\le\frac{\|x-y\|}{\delta(x)+\delta(y)}\le 1,
	$$ 
	and, simultaneously, 
	$$
	1-\frac{2\, \delta(x)}{\delta(x)+\delta(y)}\le\frac{\|x-y\|}{\delta(x)+\delta(y)}\le 1.
	$$ 
\end{itemize}

\noindent
{\bf 2.} For the term $(2)$, let us consider first the case 1). WLOG we assume that we are in the first situation of this case. Then 
$$
(2)=\int_{B_{\delta(y)}(y)\setminus B_{\delta(x)}(x)}f\, (R)_x.
$$

\begin{itemize}
	
	\item If $x\notin U_{\frac{R_0}{2}}$ and  
	\begin{equation*}
		\begin{split}
			(2)&=\int_{B_{d(y)}(y)\setminus B_{d(x)}(x)}\frac{f(\zeta)-f(x)}{\|\zeta-x\|^\gamma}\, 
			\|\zeta-x\|^\gamma\, (R)_x\, dm(\zeta),
		\end{split}
	\end{equation*}
	by the cancellation property of the Riesz kernel in a ball, and since $B_{\delta(y)}(y)\subset\Omega$ or $B_{\delta(y)}(y)\subset\bar\Omega^c$, we have the bound  $C\, \|f\|_\gamma\, \|y-x\|.$

	\item If $x\in U_{\frac{R_0}{2}}$, we have that $\delta(x)=\frac{R_0}{2}$ and then $(2)$ is bounded by $$C\, \|f\|_\gamma\, (d(y)-d(x)+\|x-y\|).$$

	In the situation of the case 2) 
	$$
	(2)=
	\int_{B_{\delta(y)}(y)\setminus B_{\delta(x)}(x)}f\, (R)_x-\int_{B_{\delta(x)}(x)\setminus B_{\delta(y)}(y)}f\,(R)_y.
	$$ 
	
	\begin{itemize}
		\item If $x,y\notin U_{\frac{R_0}{2}}$ we have that $x,y\in\Omega$ or $x,y\in\bar\Omega^c$ are the only possibilities, and also that $\delta=d$, so the first term of $(2)$ is estimated by 
		$$
		C\, \|f\|_{L^\infty}\, \|x-y\|\, \frac{d(x)+\|x-y\|+d(y)}{d(x)}\le C\, \|f\|_{L^\infty}\, \|x-y\|.
		$$

		
		The other term is similar.

		\item If $y\notin U_{\frac{R_0}{2}}$, but $x\in U_{\frac{R_0}{2}}$, we have that 
		$$
		(2)=\int_{B_{d(y)}(y)\setminus B_{\frac{R_0}{2}}(x)}f\, (R)_x-\int_{B_{\frac{R_0}{2}}(x)\setminus B_{d(y)}(y)}f\, (R)_y,
		$$ 
		and we consider several subcases, and in each one working separately the cases $f=\phi$ and $f=\psi$.  
		
		\begin{itemize} 
			\item If $x,y\in\Omega$, then $(2)$ is
			$$\int_{B_{d(y)}(y)\setminus B_{\frac{R_0}{2}}(x)}\phi\, (R)_x-\int_{B_{\frac{R_0}{2}}(x)\setminus B_{d(y)}(y)}\phi\, (R)_y,$$
			or
			$$
			-\int_{B_{\frac{R_0}{2}}(x)}\psi\, (R)_y.
			$$ For the first two integrals the bound is 
			$$\frac{C\, \|\phi\|_\infty\, (\dmt(\Omega)+R_0)}{R_0^2}\, \|y-x\|
			$$  and for the third one is
			$$\|\psi\|_{\infty}\, \frac{C\, (\dmt(\Omega)+R_0)}{R_0^2}\, (d(y)-d(x))$$ because $d(y)\geq\frac{R_0}{2}$.

			\item If $x,y\in\Omega^c$, then $(2)$ is
			$$
			-\int_{B_{\frac{R_0}{2}}(x)}\phi\, (R)_y,
			$$ 
			or 
			$$
			\int_{B_{d(y)}(y)\setminus B_{\frac{R_0}{2}}(x)}\psi\, (R)_x-\int_{B_{\frac{R_0}{2}}(x)\setminus B_{d(y)}(y)}\psi\, (R)_y.
			$$	
			
			
			This case is similar to the previous one exchainging the roles of $\phi$  and $\psi$, also in the estimates.


			\item If $x,y\in U_{\frac{R_0}{2}}$, we have that 
			$$
			(2)=
			\int_{B_{\frac{R_0}{2}}(y)\setminus B_{\frac{R_0}{2}}(x)}f\, (R)_x-\int_{B_{\frac{R_0}{2}}(x)\setminus B_{\frac{R_0}{2}}(y)}f\, (R)_y,
			$$ 
			and then 
			$$
			|(2)|\le2\, \frac{4\, \|f\|_{L^\infty}}{R_0^2}\, m(B_{\frac{R_0}{2}}(y)\setminus B_{\frac{R_0}{2}}(x))
			$$ 
			and the we can apply the previous procedure. 
		\end{itemize}
	\end{itemize}
\end{itemize}

\noindent
{\bf 3.} Now, the term 
$$
(3)\!=\!\int_{B_{\delta(y)}(y)\cap B_{\delta(x)}(x)\!}\biggl[\frac{f(\zeta)\!-\!f(x)}{\|\zeta-x\|^\gamma}\, 
\|\zeta-x\|^\gamma\, (R)_x-\frac{f(\zeta)\!-\!f(y)}{\|\zeta-y\|^\gamma}\, 
\|\zeta-y\|^\gamma\, (R)_y\biggr],\!\!\!
$$ where $x,\, y\in W$.



In such case, $d(x,\partial B_{\delta(y)}(y))=\delta(y)-\|x-y\|$ and $d(y,\partial B_{\delta(x)}(x))=\delta(x)-\|x-y\|$, so the maximum radius of a ball centered at $x$ and contained in $B_{\delta(x)}(x)\cap B_{\delta(y)}(y)$ is equal to $\min\{\delta(x),\,\delta(y)-\|x-y\|\}$. For the point $y$ we have an analogous expression. And since $\delta(x)\geq\frac{R_0}{2}\geq2\, \|x-y\|$, and analogously for $\delta(y)$, we define 
\begin{align*}
	k^\gamma_x(\zeta)&=\|\zeta-x\|^\gamma,\\
	\Delta^\gamma_x (\zeta)&=\frac{f(\zeta)-f(x)}{k_x^\gamma(\zeta)},
\end{align*} 
and then we consider the following decomposition:
$$(3)=\int_{B_{\delta(y)}(y)\cap B_{\delta(x)}(x)}[\Delta^\gamma_x\, k^\gamma_x\, (R)_x-\Delta^\gamma_y\, k^\gamma_y\, 
(R)_y]$$
$$=\biggl\{\int_{B_{\frac{\|x-y\|}{2}}(x)\cap W}\Delta^\gamma_x\, k^\gamma_x\, (R)_x-\int_{B_{\frac{\|x-y\|}{2}}(y)\cap W}\Delta^\gamma_y\, k^\gamma_y\, (R)_y\biggr\}$$
$$\quad+\biggl\{\int_{(B_{\delta(y)}(y)\cap B_{\delta(x)}(x)\cap W)\setminus B_{\frac{\|x-y\|}{2}}(x)}\Delta^\gamma_x\, k^\gamma_x\, (R)_x\\*[5pt]$$
$$\quad-\int_{(B_{\delta(y)}(y)\cap B_{\delta(x)}(x)\cap W)\setminus B_{\frac{\|x-y\|}{2}}(y)}\Delta^\gamma_y\, k^\gamma_y\, (R)_y\biggr\}\\*[5pt]$$
$$+\biggl\{\int_{B_{\frac{\|x-y\|}{2}}(x)\cap W^c}\Delta^\gamma_x\, k^\gamma_x\, (R)_x-\int_{B_{\frac{\|x-y\|}{2}}(y)\cap W^c}\Delta^\gamma_y\, k^\gamma_y\, (R)_y\biggr\}\\*[5pt]$$
$$\quad+\biggl\{\int_{(B_{\delta(y)}(y)\cap B_{\delta(x)}(x)\cap W^c)\setminus B_{\frac{\|x-y\|}{2}}(x)}\Delta^\gamma_x\, k^\gamma_x\, (R)_x\\*[5pt]$$
$$\quad-\int_{(B_{\delta(y)}(y)\cap B_{\delta(x)}(x)\cap W^c)\setminus B_{\frac{\|x-y\|}{2}}(y)}\Delta^\gamma_y\, k^\gamma_y\, (R)_y\biggr\}\\*[5pt]$$
$$=(31)+(32)+(33)+(34).$$

The term $(31)$ is directly estimated in terms of  $$ \|f\|_\gamma\, \|x-y\|^\gamma
.$$ 

Also since for $\tau=\frac{x+y}{2}$ we have $B_{\frac{\|x-y\|}{2}}(x)\cup B_{\frac{\|x-y\|}{2}}(y)\subset B_{\|x-y\|}(\tau)$, then
\begin{equation*}
	\begin{split}
		(32)&=\{\int_{(B_{\|x-y\|}(\tau)\cap W)\setminus B_{\frac{\|x-y\|}{2}}(x)}\Delta^\gamma_x\, k^\gamma_x\, (R)_x\\*[5pt]
		&-\int_{(B_{\|x-y\|}(\tau)\cap W)\setminus B_{\frac{\|x-y\|}{2}}(y)}\Delta^\gamma_y\, k^\gamma_y\, (R)_y\}\\*[5pt]
		&\quad+\int_{(B_{\delta(y)}(y)\cap B_{\delta(x)}(x)\cap W)\setminus B_{\|x-y\|}(\tau)}[\Delta^\gamma_x\, k^\gamma_x\, (R)_x-\Delta^\gamma_y\, k^\gamma_y\, (R)_y]\\*[5pt]
		&=(321)+(322),
	\end{split}
\end{equation*}
and the term 
$(321)$ has the same estimate than $(31)$ in a direct way.

For the terms $(322)$, $(33)$ and $(34)$ we will consider different situations related to the size of the balls and the distance to the boundary of $W$.

\begin{itemize}
	\item[A)] If $B_{\delta(x)}(x)\cap B_{\delta(y)}(y)\subset W$, then $$(322)=\int_{(B_{\delta(y)}(y)\cap B_{\delta(x)}(x))\setminus B_{\|x-y\|}(\tau)}[\Delta^\gamma_x\, k^\gamma_x\, (R)_x-\Delta^\gamma_y\, k^\gamma_y\, (R)_y]$$
	and we analyze the original expression 
	$$
	\Delta^\gamma_x\, k^\gamma_x\, (R)_x-\Delta^\gamma_y\, k^\gamma_y\,(R)_y=(f(\zeta)-f(x))\, (R)_x(\zeta)
	-(f(\zeta)-f(y))\, (R)_y(\zeta)=(*)
	$$ in different situations.

	
	\begin{itemize}
		
		\item[$\bullet$] If $x\notin U_{\frac{R_0}{4}}$, then, as $\|x-y\|<\frac{R_0}{4}$, we have that $y,\tau=\frac{x+y}{2}\in B_{\frac{R_0}{4}}(x)$, 
		and then, using the decomposition
		$$
		(*)\!=\!(f(\zeta)-f(\tau)) ((R)_x-(R)_y)(\zeta)+
		(f(y)-f(\tau))\, (R)_y(\zeta)$$
		$$-(f(x)-f(\tau))\, (R)_x(\zeta),\!\!
		$$
		we have 
		\begin{equation*}
			\begin{split}
				(322)&=\int_{(B_{\delta(y)}(y)\cap B_{\delta(x)}(x))\setminus B_{\|x-y\|}(\tau)}\Delta^\gamma_\tau\, k^\gamma_\tau\, ((R)_x-(R)_y)\\*[5pt]
				&\quad+(f(y)-f(\tau))\, \int_{(B_{\delta(y)}(y)\cap B_{\delta(x)}(x))\setminus B_{\|x-y\|}(\tau)}(R)_y\\*[5pt]
				&\quad-(f(x)-f(\tau))\, \int_{(B_{\delta(y)}(y)\cap B_{\delta(x)}(x))\setminus B_{\|x-y\|}(\tau)}(R)_x\\*[5pt]
				&=(3221)
				+(3222).
			\end{split}
		\end{equation*}
		
		
		For the integral $(3321)$ we use the change of variables 
		$$
		\zeta\rightarrow s=\frac{\zeta-\tau}{\|a\|}=\phi(\zeta),
		$$ 
		where $a=\frac{x-y}{2}$, then $\phi(\tau)=0$ and $J\phi=\|a\|^{-n}$. 
		
		We have 
		\begin{equation*}
			\begin{split}
				k^\gamma_\tau(\zeta)\, |((R)_x-(R)_y)(\zeta)|&\le C\, \|\zeta-\tau\|^\gamma\, \frac{\|x-y\|}{\|\tau'-\zeta\|^{n+1}}\\*[5pt]
				&\le C'\, \frac{\|x-y\|}{\|\tau-\zeta\|^{n+1-\gamma}},
			\end{split}
		\end{equation*} 
		where $\tau'=\alpha\, x+(1-\alpha)\, y$, and as if $\zeta\notin B_{\|x-y\|}(\tau)$ then $$\|\zeta-\tau'\|\geq\min\{\|\zeta-x\|,\, \|\zeta-y\|\}\geq\frac{\|\zeta-\tau\|}{2}.$$
		
		Therefore 
		$$
		|(3221)|\le C''\, \frac{\|x-y\|}{\|a\|^{1-\gamma}}\, \|f\|_\gamma\, \int_{\R^n\setminus B_{2}(0)}\frac{1}{\|s\|^{n+1-\gamma}}\, dm(s).
		$$
		
		\medskip 
		
		For the term $(3322)$, we have 
		
		\begin{lem}\label{unifball} 
			If  $x\notin U_{\frac{R_0}{4}}$ and $\|x-y\|<\frac{R_0}{4}$, then 
			$$
			\biggl|\int_{(B_{\delta(y)}(y)\cap B_{\delta(x)}(x))\setminus B_{\|x-y\|}(\tau)}(R_{j,\, i})_x\biggr|\le C(n) .
			$$
		\end{lem}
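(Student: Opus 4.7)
The plan is to pass to polar coordinates centered at $x$ and exploit the fact that the Riesz kernel $R_{j,i}(\zeta-x)=K_{j,i}(\omega)/\|\zeta-x\|^n$ (with $\omega=(\zeta-x)/\|\zeta-x\|$) has an angular part $K_{j,i}$ that is bounded and has zero mean on the unit sphere. This is the same cancellation property that drove Lemma \ref{Aux2}, only now it must be used in a non-radial annular region.

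First, I would check the geometric picture. Since $\|x-\tau\|=\|x-y\|/2<\|x-y\|$, the point $x$ lies strictly inside the excluded ball $B_{\|x-y\|}(\tau)$, so every ray from $x$ exits this ball at a unique radius $r_+(\omega)>0$, found by solving the quadratic $\|x+r\omega-\tau\|^2=\|x-y\|^2$. Since $x\notin U_{R_0/4}$ gives $\delta(x),\delta(y)\ge R_0/2$ and we have $\|x-y\|<R_0/4$, the excluded ball sits comfortably inside $B_{\delta(x)}(x)\cap B_{\delta(y)}(y)$; thus on every ray the admissible interval is $[r_+(\omega),r_2(\omega)]$, where $r_2(\omega)$ is the minimum of $\delta(x)$ and the exit radius from $B_{\delta(y)}(y)$. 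Direct computation, using $|\omega\cdot(y-x)|\le\|x-y\|$, yields
$$r_+(\omega)/\|x-y\|\in\left[\tfrac{\sqrt{3}-1}{2},\tfrac{3}{2}\right],$$
and the constraint $\|x-y\|<\min(\delta(x),\delta(y))/2$ shows that $r_2(\omega)/M$ is bounded above and below by universal constants, where $M:=\min(\delta(x),\delta(y))$.

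Passing to polar coordinates converts the integral into
$$\int_{(B_{\delta(y)}(y)\cap B_{\delta(x)}(x))\setminus B_{\|x-y\|}(\tau)}(R_{j,i})_x=\int_{S^{n-1}}K_{j,i}(\omega)\,\ln\!\left(\frac{r_2(\omega)}{r_+(\omega)}\right)d\sigma(\omega).$$
Because $\int_{S^{n-1}}K_{j,i}\,d\sigma=0$, I may subtract the constant $\ln(M/\|x-y\|)$ at no cost, rewriting the right-hand side as
$$\int_{S^{n-1}}K_{j,i}(\omega)\left[\ln\!\frac{r_2(\omega)}{M}-\ln\!\frac{r_+(\omega)}{\|x-y\|}\right]d\sigma(\omega).$$
By the previous step both bracketed logarithms are uniformly bounded by constants depending only on $n$, and $|K_{j,i}|$ is itself bounded by a constant depending only on $n$, so the integral is bounded by $C(n)$ as desired.

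The only genuinely delicate step is verifying in the second paragraph that $r_2(\omega)$ remains comparable to $M$ uniformly in $\omega$, even when $\delta(x)$ and $\delta(y)$ differ substantially; this will follow from the triangle inequality together with $\|x-y\|<R_0/4\le\min(\delta(x),\delta(y))/2$, which forces the moving-center perturbation of the exit radius to be a multiplicative factor in a fixed bounded interval. Everything else reduces to elementary one-variable calculus on each ray, and the mean-zero cancellation at the end is the same mechanism used throughout Section 4.
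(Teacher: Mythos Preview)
Your approach is correct and genuinely different from the paper's. The paper does not use polar coordinates and cancellation directly; instead it writes $(R_{j,i})_x = d\bigl((-1)^{i-1}(K_j)_x\,\widehat{dx_i}\bigr)$ and applies Stokes' theorem, reducing the volume integral to three surface integrals over $\partial B_{\delta(x)}(x)\cap B_{\delta(y)}(y)$, $B_{\delta(x)}(x)\cap\partial B_{\delta(y)}(y)$, and $\partial B_{\|x-y\|}(\tau)$, each of which is then bounded by parametrizing with the unit sphere and using $\|x-y\|<\delta(\cdot)/2$. Your method is more direct and more transparent: the identity
\[
\int_{D}(R_{j,i})_x
=\int_{S^{n-1}}K_{j,i}(\omega)\,\bigl[\ln r_2(\omega)-\ln r_+(\omega)\bigr]\,d\sigma(\omega)
\]
together with the mean-zero of $K_{j,i}$ isolates exactly the two ratios $r_2/M$ and $r_+/\|x-y\|$ that must be controlled. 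The paper's Stokes route has the advantage of fitting the pattern used elsewhere in Section~5 (and of not requiring the domain to be radially simple from $x$), while yours needs less machinery and makes the role of the cancellation explicit.

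Two small remarks. First, your lower bound $r_+(\omega)\ge\tfrac{\sqrt3-1}{2}\|x-y\|$ is slightly off: writing $a=x-\tau$ with $\|a\|=\|x-y\|/2$ and $b=\omega\cdot a$, one gets $r_+(\omega)=-b+\sqrt{b^2+3\|a\|^2}$, which is monotone in $b\in[-\|a\|,\|a\|]$ and ranges over $[\|a\|,3\|a\|]=[\tfrac12\|x-y\|,\tfrac32\|x-y\|]$; this only improves your bound. Second, the hypothesis $x\notin U_{R_0/4}$ is not actually needed for your argument (nor, on inspection, for the paper's): the inequality $\|x-y\|<R_0/4\le\min(\delta(x),\delta(y))/2$ follows just from $\delta\ge R_0/2$, which holds everywhere by definition. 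With those observations your verification that $r_2(\omega)/M\in[\tfrac12,\tfrac32]$ goes through exactly as you outlined.
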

		
		\begin{proof} 
			Using Stokes formula, 
			\begin{equation*}
				\begin{split}
					&\int_{(B_{\delta(y)}(y)\cap B_{\delta(x)}(x))\setminus B_{\|x-y\|}(\tau)}(R_{j,\, i})_x\\*[5pt]
					&=\int_{(B_{\delta(y)}(y)\cap B_{\delta(x)}(x))\setminus B_{\|x-y\|}(\tau)}(\frac{\partial K_j}{\partial x_i})_x\, V\\*[5pt]
					&=\int_{(B_{\delta(y)}(y)\cap B_{\delta(x)}(x))\setminus B_{\|x-y\|}(\tau)} d (K_j)_x\wedge\, (-1)^{i-1} \widehat {dx_i}\\*[5pt]
					&=\, (-1)^{i-1}\biggl\{\int_{(\partial B_{\delta(y)}(y))\cap B_{\delta(x)}(x)}\!+\!\int_{B_{\delta(y)}(y)\cap\partial(B_{\delta(x)}(x))}\!-\!\int_{\partial B_{|x-y|}(\tau)}\biggr\}\, (K_j)_x\, \widehat {dx_i}\\*[5pt]
					&=(I)+(II)-(III).
				\end{split}
			\end{equation*}
			
			And we use the change $\zeta=\tau+2\, \|a\|\, \theta$ where $\theta\in S_1^{n-1}(0)$ and we have 
			\begin{equation*}
				\begin{split}
					(III)& \simeq\, \int_{\partial B_{2\, \|a\|}(\tau)}\,  \frac{(\zeta-x)_j}{\|\zeta-x\|^n}\, \widehat {d\zeta_i}\\*[5pt]
					&=(2\, \|a\|)^{n-1}\, \int_{S_1^{n-1}(0)}\, \frac{(\tau-x+2\, \|a\|\, \theta)_j}{\|\tau-x+2\, \|a\|\, \theta\|^n}\, \widehat {d\theta_i}\\*[5pt]
					&=\int_{S_1^{n-1}(0)}\, \frac{(\frac{\tau-x}{2\, \|a\|}+\theta)_j}{\|\frac{\tau-x}{2\, \|a\|}+\theta\|^n}\, \widehat {d\theta_i}.
 				\end{split}
			\end{equation*}

			Then $$|(III)|\lesssim \int_{S_1^{n-1}(0)}\, \frac{1}{(1-\frac{\|a\|}{2\, \|a\|})^{n-1}}\, |\widehat {d\theta_i}|.$$

			
			The integral 
			$$(II)=\int_{B_{\delta(y)}(y)\cap\partial(B_{\delta(x)}(x))}(K_j)_x\, \widehat {d\zeta_i}
			$$ and using the coordinates $\zeta=x+\delta(x)\, \theta$ we have that
			$$(II)=\int_{\{\theta\in S_1^{n-1}(0): \|x+\delta(x)\, \theta-y\|\le\delta(y)\}}\theta_j\, \widehat {d\theta_i},
			$$ and it is uniformly bounded.
			
			
			In the same way, in our situation, 
			$$(I)=\int_{(\partial B_{\delta(y)}(y))\cap B_{\delta(x)}(x)} \frac{(\zeta-x)_j}{\|\zeta-x\|^n}\, \widehat {d\zeta_i}
			$$ and now the change of coordinates $\zeta=y+\delta(y)\, \theta$ we have
			$$(I)=\delta(y)^{n-1}\, \int_{\{\theta\in S_1^{n-1}(0): \|y+\delta(y)\, \theta-x\|\le\delta(x)\}}\frac{(y+\delta(y)\, \theta-x)_j}{\|y+\delta(y)\, \theta-x\|^n}\, \widehat {d\theta_i}$$
			
			Therefore
			$$|(I)|\le (\frac{\delta(y)}{\delta(y)-\|x-y\|})^n\, \int_{S_1^{n-1}(0)}|\widehat {d\theta_i}|
			$$
			and if $d(y)>\frac{R_0}{2}$, then $\|x-y\|\le\frac{R_0}{4}<\frac{d(y)}{2}$ implying that 
			\begin{equation*}
				|(I)|\le 2^n\, \int_{S_1^{n-1}(0)}|\widehat {d\theta_i}|.
			\end{equation*}
			
			If $d(y)\le\frac{R_0}{2}$, then $\delta(y)=\frac{R_0}{2}$ and 
			$$(I)=(\frac{R_0}{2})^{n-1}\, \int_{\{\theta\in S_1^{n-1}(0): \|y+\frac{R_0}{2}\, \theta-x\|\le\delta(x)\}}\frac{(y+\frac{R_0}{2}\, \theta-x)_j}{\|y+\frac{R_0}{2}\, \theta-x\|^n}\, \widehat {d\theta_i},$$ implying that 
			$$
			|(I)|\le(\frac{R_0}{2})^{n-1}\, \int_{\{\theta\in S_1^{n-1}(0): \|y+\frac{R_0}{2}\, \theta-x\|\le\delta(x)\}}\frac{1}{(\frac{R_0}{2}- \|y-x\|)^{n-1}}\, |\widehat {d\theta_i}|$$
			$$\le\, (\frac{\frac{R_0}{2}}{\frac{R_0}{2}-\frac{R_0}{4}})^{n-1}\, \int_{S_1^{n-1}(0)}|\widehat {d\theta_i}|$$

		\end{proof}
		
		The lemma implies that 
		$$
		|(3222)|\le C\, \|x-y\|^\gamma\, \|f\|_\gamma.
		$$ 
		
		\item[ii)] If $B_{\delta(y)}(y)\cap B_{\delta(x)}(x)\cap W^c\neq\emptyset$, then we consider the following cases: 
		
	\end{itemize}

	\item[$\bullet$] If $y\notin U_{\frac{R_0}{4}}$, the situation is completely equivalent to the previous one.

	The discussion above also covers completely the cases of $x\notin U_{\frac{R_0}{2}}$ or $y\notin U_{\frac{R_0}{2}}$ for then $B_{\delta(y)}(y)\cap B_{\delta(x)}(x)\cap W^c=\emptyset$ therefore $$(33)+(34)=0$$ in this case.

	\item[$\bullet$] If $x,\, y\in U_{\frac{R_0}{4}}$ we have $\delta(x)=\delta(y)=\frac{R_0}{2}$.Moreover there exists $p\in W^c$ such that $\|p-x\|\le\frac{R_0}{4}$. This implies that $p\in B_{\frac{R_0}{2}}(x)\cap W^c$ and also $\|p-y\le\|p-x\|+\|x-y\|<\frac{R_0}{2}$ therefore $B_{\delta(y)}(y)\cap B_{\delta(x)}(x)\cap W^c\neq\emptyset$, and this situation does not occur in te case A). 
	
\end{itemize}

If $B_{\delta(y)}(y)\cap B_{\delta(x)}(x)\cap W^c\neq\emptyset$, then $x,\, y\in U_{\frac{R_0}{2}}$ so $\delta(x)=\delta(y)=\frac{R_0}{2}$.
Then both $x$ and $y$ are points of $W$ but eventually the point $\tau=\frac{x+y}{2}$ does not belong to $W$ so we take $\tau'\in \bar W$ at minimum distance of $\tau$. Then \begin{equation*}
	\begin{split}
		(322)&=\int_{(B_{\frac{R_0}{2}}(y)\cap B_{\frac{R_0}{2}}(x))\setminus B_{\|x-y\|}(\tau)\cap W} [(f(\zeta)-f(x)) (R)_x(\zeta)\\*[5pt]
		&\hspace*{5.5cm}
		-(f(\zeta)-f(y)) (R)_y(\zeta)]\\*[5pt]
		&=\int_{(B_{\frac{R_0}{2}}(y)\cap B_{\frac{R_0}{2}}(x)\cap W)\setminus B_{\|x-y\|}(\tau)}(f(\zeta)-f(\tau'))\, [(R)_x(\zeta)-(R)_y(\zeta)]\\*[5pt]
		&\quad+\biggl\{(f(\tau')-f(x))\, \int_{(B_{\frac{R_0}{2}}(y)\cap B_{\frac{R_0}{2}}(x)\cap W)\setminus B_{\|x-y\|}(\tau)}(R)_x(\zeta)\\*[5pt]
		&\quad-(f(\tau')-f(y))\, \int_{(B_{\frac{R_0}{2}}(y)\cap B_{\frac{R_0}{2}}(x)\cap W)\setminus B_{\|x-y\|}(\tau)}(R)_x(\zeta)\biggr\}\\*[5pt]
		&=(3223)+(3224).
	\end{split} 
\end{equation*}

Using the mean value theorem for the function $z\rightarrow R(z-b)$, we have 
$$|(3223)|\le C\, 
\int_{(B_{\frac{R_0}{2}}(y)\cap B_{\frac{R_0}{2}}(x)\cap W)\setminus B_{\|x-y\|}(\tau)}|\triangle^\gamma_{\tau'}(\zeta)|\, k_{\tau'}^\gamma(\zeta)\, \frac{\|x-y\|}{\|\zeta-\lambda\|^{n+1}},
$$ where $\lambda=\alpha\, x+(1-\alpha)\, y$, and as if $\zeta\notin B_{\|x-y\|}(\tau)$ then $$\|\zeta-\lambda\|\geq\min\{\|\zeta-x\|,\, \|\zeta-y\|\}\geq\frac{\|\zeta-\tau\|}{2}.$$ Then 
$$|(3223)|\le C\, 
\int_{(B_{\frac{R_0}{2}}(y)\cap B_{\frac{R_0}{2}}(x)\cap W)\setminus B_{\|x-y\|}(\tau)}|\triangle^\gamma_{\tau'}(\zeta)|\, k_{\tau'}^\gamma(\zeta)\, \frac{\|x-y\|}{\|\zeta-\tau\|^{n+1}}\, dm(\zeta),
$$

Since $\tau'\in \partial W$ then, 
$$
\|\tau'-\tau\|\le\|\tau-x\|=\frac{\|x-y\|}{2}=\|a\|,
$$ 
then 
$$
\|x-\tau'\|\le\|x-\tau\|+\|\tau-\tau'\|\le 2\, \|a\|
$$ 
and 
$$
\|\zeta-\tau'\|\le\|\zeta-\tau\|+\|\tau-\tau'\|\le\|\zeta-\tau\|+\|a\|,
$$ 
so 
$$
\|\zeta-\tau'\|^\gamma\le(\|\zeta-\tau\|+\|a\|)^\gamma
$$ 
and then $$|(3223)|\le C\, \|f\|_\gamma\, 
\int_{(B_{\frac{R_0}{2}}(y)\cap B_{\frac{R_0}{2}}(x)\cap W)\setminus B_{\|x-y\|}(\tau)}  \frac{(\|\zeta-\tau\|+\|a\|)^\gamma\, \|x-y\|}{\|\zeta-\tau\|^{n+1}}\, dm(\zeta),
$$
$$\le C'\, \|f\|_\gamma\, \|x-y\|\, 
\int_{(B_{\frac{R_0}{2}}(y)\cap B_{\frac{R_0}{2}}(x)\cap W)\setminus B_{\|x-y\|}(\tau)}  \frac{1}{\|\zeta-\tau\|^{n+1-\gamma}}\, dm(\zeta),
$$
$$\le C''\, \|f\|_\gamma\, \|x-y\|\, 
\int_{\|x-y\|}^\infty
\frac{1}{r^{2-\gamma}}\, dr,
$$

\medskip

Also, a repetition of the arguments shows that the term $(3224)$ has the same estimate as the term $(3222)$.

The terms $(33)$ and $(34)$ do not vanish necessarily in this situation and it is convenient to write them as $$\int_{B_{\frac{R_0}{2}}(y)\cap B_{\frac{R_0}{2}}(x)\cap W^c}[f(y)\, (R)_y(\zeta)-f(x)\, (R)_x]\, dm(\zeta)=(35)$$
$$=(f(y)-f(x))\, \int_{B_{\frac{R_0}{2}}(y)\cap B_{\frac{R_0}{2}}(x)\cap W^c}(R)_y(\zeta)\, dm(\zeta)$$
$$+f(x)\, \int_{B_{\frac{R_0}{2}}(y)\cap B_{\frac{R_0}{2}}(x)\cap W^c}[(R)_y-(R)_x](\zeta)\, dm(\zeta)$$
$$(351)+(352).$$

Now, $$|(351)|\le \|f\|_\gamma\, \|x-y\|\, |\int_{B_{\frac{R_0}{2}}(y)\cap B_{\frac{R_0}{2}}(x)\cap W^c}(R)_y(\zeta)\, dm(\zeta)|$$ and

\begin{lem}\label{tralari} 
	If  $x,\, y\in U_{\frac{R_0}{2}}\cap W$ there exists a constant $C(n,\, R_0)>0$ such that 
	$$
	\biggl|\int_{B_{\frac{R_0}{2}}(y)\cap B_{\frac{R_0}{2}}(x)\cap W^c}(R)_y(\zeta)\, dm(\zeta)\biggr|\le C(n,\, R_0).
	$$
\end{lem}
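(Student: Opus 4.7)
The plan is to mimic Lemma \ref{unifball}: apply Stokes' theorem to the identity $(R_{j,i})_y\,dm = (-1)^{i-1}\,d(K_j)_y\wedge\widehat{dx_i}$ to convert the volume integral over $D := B_{\frac{R_0}{2}}(y)\cap B_{\frac{R_0}{2}}(x)\cap W^c$ into a boundary integral
$$\int_D (R_{j,i})_y\,dm = (-1)^{i-1}\int_{\partial D}(K_j)_y\,\widehat{dx_i}.$$
The boundary $\partial D$ decomposes into three pieces: $\Sigma_1 := \partial B_{\frac{R_0}{2}}(y)\cap B_{\frac{R_0}{2}}(x)\cap\overline{W^c}$, $\Sigma_2 := \partial B_{\frac{R_0}{2}}(x)\cap B_{\frac{R_0}{2}}(y)\cap\overline{W^c}$, and the new piece $\Sigma_3 := \partial W\cap B_{\frac{R_0}{2}}(y)\cap B_{\frac{R_0}{2}}(x)$, which is a portion of $\partial\Omega$.

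The pieces $\Sigma_1$ and $\Sigma_2$ are bounded verbatim as in Lemma \ref{unifball}, using the ambient hypothesis $\|x-y\|\le\frac{R_0}{4}$ carried over from the outer proof: on $\Sigma_1$ one has $\|\zeta - y\| = \frac{R_0}{2}$, giving $|K_j(\zeta - y)| \le c_n(\frac{R_0}{2})^{-(n-1)}$, and the rescaling $\zeta = y + \frac{R_0}{2}\theta$ reduces the integral to a bounded surface integral over a subset of $S^{n-1}$; on $\Sigma_2$ the assumption $\|x-y\| \le \frac{R_0}{4}$ forces $\|\zeta - y\| \ge \frac{R_0}{4}$ for all $\zeta \in \partial B_{\frac{R_0}{2}}(x)$, and the same rescaling with center $x$ yields a bounded integral.

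The nontrivial contribution comes from $\Sigma_3$, since $y$ may lie arbitrarily close to $\partial W$. We invoke Lemma \ref{geom} at the nearest-boundary point $\bar y \in \partial W$ to $y$: in the orthonormal frame centered at $\bar y$ with last axis along the unit normal $\eta(\bar y)$, the surface $\partial W \cap B_{R_1}(\bar y)$ is the graph $\zeta_n = \varphi_{\bar y}(s)$ with $\varphi_{\bar y} \in \ka{1,\gamma}$, $\varphi_{\bar y}(0) = 0$ and $\nabla\varphi_{\bar y}(0) = 0$, while $y$ has coordinates $(0,\pm d(y))$. Thus on the graph $\zeta - y = (s,\,\varphi_{\bar y}(s) \mp d(y))$, and the pullback of $\widehat{d\zeta_i}$ equals $dm_{n-1}(s)$ for $i = n$ and a combination involving $\partial_k\varphi_{\bar y}(s)$ for $i < n$. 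Replacing the true graph by the tangent hyperplane ($\varphi_{\bar y}\equiv 0$) reduces the main contribution to an explicit Newtonian-potential integral over $\{|s|\le\frac{R_0}{2}\}$ at signed distance $d(y)$: the $j < n$ components vanish by odd symmetry, and for $j = n$ the polar substitution $s = d(y)\,r\,\omega$ turns it into
$$\mp c_n\int_{S^{n-2}}d\sigma(\omega)\int_0^{R_0/(2d(y))} \frac{r^{n-2}}{(r^2+1)^{n/2}}\,dr,$$
which is bounded uniformly in $d(y)$ by $|S^{n-2}|\int_0^\infty r^{n-2}(r^2+1)^{-n/2}\,dr<\infty$. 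The perturbation from the true graph is controlled by $|\varphi_{\bar y}(s)| \le C|s|^{1+\gamma}$ and, in the case $i<n$, by the additional factor $|\partial_k\varphi_{\bar y}(s)| \le C|s|^\gamma$; each correction produces an integrand with strictly improved decay at $s=0$ whose integral is finite in terms of $n$, $\gamma$ and $R_0$.

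The main obstacle is precisely this uniform control on $\Sigma_3$ as $d(y)\to 0$: a size-only estimate $|K_j(\zeta - y)| \lesssim \|\zeta - y\|^{-(n-1)}$ integrated against the $(n-1)$-dimensional measure of a surface passing close to $y$ diverges logarithmically. The cancellation needed to absorb this divergence is supplied jointly by the odd vector structure of $K_j$ on the tangent hyperplane and by the $\ka{1,\gamma}$ near-flatness of $\partial W$ furnished by Lemma \ref{geom}.
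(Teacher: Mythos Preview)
Your proposal is correct and follows a genuinely different route from the paper. The paper does \emph{not} apply Stokes directly to $D=B_{\frac{R_0}{2}}(y)\cap B_{\frac{R_0}{2}}(x)\cap W^c$. Instead it first peels off an annular shell using $B_{\frac{R_0}{2}-\|x-y\|}(y)\subset B_{\frac{R_0}{2}}(y)\cap B_{\frac{R_0}{2}}(x)$, reducing to a single ball; then it replaces $W^c$ by the half-space $\{\kappa_{y_0}>0\}$ determined by the tangent hyperplane at the nearest boundary point $y_0$, invoking the already-established Riesz--Geometric Lemma~\ref{rszgl} to control the volume error; only after that does it apply Stokes to the half-ball $B_{\frac{R_0}{2}-\|x-y\|}(y)\cap\{\kappa_{y_0}>0\}$, yielding boundary integrals over a spherical cap and a planar disk which are computed explicitly via parametrizations.

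Your approach is more direct in that a single Stokes step produces the three boundary pieces, and the curved piece $\Sigma_3\subset\partial W$ is then handled by graph-versus-tangent comparison using the $\ka{1,\gamma}$ flatness from Lemma~\ref{geom}. This trades the paper's reliance on Lemma~\ref{rszgl} (a volume comparison) for a surface-integral comparison, but the underlying cancellation mechanism---odd symmetry of $K_j$ on the tangent hyperplane plus $|\varphi_{\bar y}(s)|\lesssim|s|^{1+\gamma}$ and $|\nabla\varphi_{\bar y}(s)|\lesssim|s|^\gamma$---is the same one that drives Lemma~\ref{rszgl} itself. One small point you should make explicit: the graph chart furnished by Lemma~\ref{geom} covers $\partial W$ only inside $B_{R_1}(\bar y)$, which need not contain all of $\Sigma_3$; split $\Sigma_3$ into $\Sigma_3\cap B_{R_1/2}(\bar y)$ (where your argument runs) and the remainder (where $\|\zeta-y\|$ is bounded below by a constant depending on $R_0,R_1$, so the crude size bound on $K_j$ suffices).
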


\begin{proof} Since $$B_{\frac{R_0}{2}-\|x-y\|}(y)\subset B_{\frac{R_0}{2}}(y)\cap B_{\frac{R_0}{2}}(x)\subset B_{\frac{R_0}{2}+\|x-y\|}(y)$$ we have $$\int_{B_{\frac{R_0}{2}}(y)\cap B_{\frac{R_0}{2}}(x)\cap W^c}(R)_y(\zeta)\, dm(\zeta)=\int_{B_{\frac{R_0}{2}-\|x-y\|}(y)\cap W^c}(R)_y(\zeta)\, dm(\zeta)$$
	$$+\int_{[B_{\frac{R_0}{2}}(y)\cap B_{\frac{R_0}{2}}(x)\cap W^c]\setminus B_{\frac{R_0}{2}-\|x-y\|}(y)}(R)_y(\zeta)\, dm(\zeta)=(I)+(II)$$ and $$|(II)|\le C(n,\, R_0)$$ for $\|\zeta-y\|$ is bounded from above and from below in terms of $R_0$.
	
	Also if $y_0\in\partial W$ is a point minimizing the distance of $y$ to $W^c$, then $$(I)=\int_{B_{\frac{R_0}{2}-\|x-y\|}(y)\cap\{\rho>0\}}(R)_y(\zeta)\, dm(\zeta)$$
	$$=\int_{B_{\frac{R_0}{2}-\|x-y\|}(y)\cap\{\kappa_{y_0}>0\}}(R)_y(\zeta)\, dm(\zeta)$$
	$$+\biggl\{\int_{B_{\frac{R_0}{2}-\|x-y\|}(y)\cap \{\rho>0,\, \kappa_{y_0}<0\}}(R)_y(\zeta)\, dm(\zeta)$$
	$$-\int_{B_{\frac{R_0}{2}-\|x-y\|}(y)\cap \{\rho<0,\, \kappa_{y_0}>0\}}(R)_y(\zeta)\, dm(\zeta)\biggr\}=(III)+(IV).$$
	
	By the Riesz's geometric lemma (lemma \ref{rszgl}) the term $(IV)$ is bounded.
	
	For the term $(III)$ the translation  
	$t=\zeta-y$ makes $(III)\simeq\, (V)$ where  $$(V)=\int_{B_{\frac{R_0}{2}-\|x-y\|}(0)\cap\{(t,\, \eta(y_0))>d(y)\}}\frac{\|t\|^2\, \delta_{j,\, i}-n\, t_j\,  t_i}{\|t\|^{n+2}}\, dm(t)$$
	$$=\int_{\partial(B_{\frac{R_0}{2}-\|x-y\|}(0)\cap\{(t,\, \eta(y_0))>d(y)\})}\, d\biggl((-1)^{i-1}\, \frac{s_j}{\|s\|^{n}}\, \widehat{ds_i}\biggr).$$ 
	
	Using the notation  $\lambda=\frac{R_0}{2}-\|x-y\|$, $\beta=d(y)$ and $\mu=\sqrt{\lambda^2-\beta^2}$  we have $$(V)=\int_{\partial (B_\lambda(0))\cap\{(t,\, \eta(y_0))>\beta\}}(-1)^{i-1}\, \frac{t_j}{\|t\|^{n}}\, \widehat{dt_i}$$
	$$-\int_{B_\lambda(0)\cap\{(t,\, \eta(y_0))=\beta\})}\, (-1)^{i-1}\, \frac{t_j}{\|t\|^{n}}\, \widehat{dt_i}$$ 
	and consider for $u_1,\, \dots,\, u_{n-1}$ an orthonormal complement of the normal vector $u_n=\eta(y_0)$ the parametrizations $\phi_-,\, \phi_+:B_\mu^{n-1}(0)\rightarrow\R^n$ given by $$\phi_+(s_1,\, \dots,\, s_{n-1})= \sum_{k=1}^{n-1}s_k\, u_k+(\sqrt{\lambda^2-\|s\|^2})\, \eta(y_0)$$ and $$\phi_-(s_1,\, \dots,\, s_{n-1})=\sum_{k=1}^{n-1}s_k\, u_k+\beta\, \eta(y_0)$$ and then
	$$(V)=(-1)^{i-1}\, \biggl(\int_{B_\mu^{n-1}(0)} \frac{(\phi_+)_j}{\|\phi_+\|^{n}}\, \widehat{d(\phi_+)_i}
	-\int_{B_\mu^{n-1}(0)}\,  \frac{(\phi_-)_j}{\|\phi_-\|^{n}}\, \widehat{d(\phi_-)_i}\biggr).$$
	
	Now some computations:
	$$(\phi_-)_l=(\phi_-,\, e_l)=\sum_{k=1}^{n-1}s_k\, (u_k,\, e_l)+\beta\, (\eta(y_0),\, e_l)$$ 
	$$=A_l(s)+\beta\, (\eta(y_0),\, e_l)$$so
	$$\widehat{d(\phi_-)_i}=\bigwedge_{l\neq i}\biggl(\sum_{k=1}^{n-1} (u_k,\, e_l)\, ds_k\biggr)=\Gamma_i\, V_{n-1}(s)$$
	and similarly 
	$$(\phi_+)_l=(\phi_+,\, e_l)=\sum_{k=1}^{n-1}s_k\, (u_k,\, e_l)+(\sqrt{\lambda^2-\|s\|^2})\, (\eta(y_0),\, e_l)$$
	$$=A_l(s)+(\sqrt{\lambda^2-\|s\|^2})\, (\eta(y_0),\, e_l)$$	
	so
	$$\widehat{d(\phi_+)_i}=\bigwedge_{l\neq i}\biggl(\sum_{k=1}^{n-1} (u_k,\, e_l)\, ds_k+(\eta,\, e_l)\, d(\sqrt{\lambda^2-\|s\|^2})\biggr)$$ 
	$$=\Gamma_i\, V_{n-1}(s)+d(\sqrt{\lambda^2-\|s\|^2}\wedge\biggl(\sum_{p\neq i} \epsilon(i,\, p)\, \bigwedge_{l\neq i,\, p}((\eta,\, e_l)\,  A_l)\biggr)$$
	$$=\Gamma_i\, V_{n-1}(s) +d(\sqrt{\lambda^2-\|s\|^2}\wedge\Upsilon_i.$$ Finally
	$\|\phi_-\|=\sqrt{\|s\|^2+\beta^2}$ and $\|\phi_+\|=\lambda$ and we have $$\int_{B_\mu^{n-1}(0)} \frac{(\phi_+)_j}{\|\phi_+\|^{n}}\, \widehat{d(\phi_+)_i}$$
	$$=\int_{B_\mu^{n-1}(0)} \frac{A_j(s)+(\sqrt{\lambda^2-\|s\|^2})\, (\eta(y_0),\, e_j)}{\lambda^{n}}\, (\Gamma_i\, V_{n-1}(s) +d(\sqrt{\lambda^2-\|s\|^2}\wedge\Upsilon_i)$$
	$$=\frac{1}{\lambda^n}\, \{\Gamma_i\, \int_{B_\mu^{n-1}(0)} (A_j(s)\, +(\sqrt{\lambda^2-\|s\|^2})\, (\eta(y_0),\, e_j)\, )\, V_{n-1}(s)$$
	$$+\int_{B_\mu^{n-1}(0)} A_j(s)\, \frac{-\sum_{q=1}^{n-1}s_q\, ds_q\wedge\Upsilon_i}{\sqrt{\lambda^2-\|s\|^2}}$$
	$$+(\eta(y_0),\, e_j)\, \int_{B_\mu^{n-1}(0)}\sqrt{\lambda^2-\|s\|^2}\, \frac{-\sum_{q=1}^{n-1}s_q\, ds_q\wedge\Upsilon_i}{\sqrt{\lambda^2-\|s\|^2}}\}.$$ The first and the third terms are clearly bounded in terms of $R_0$.
	
	The second term is a linear combination of integrals of type $$\int_{B_\mu^{n-1}(0)} \frac{s_p\, s_q\, }{\sqrt{\lambda^2-\|s\|^2}}\, V_{n-1}(s)$$ and using spherical coordinates and integration by parts we see that it is bounded in terms of $R_0$ too.
	
	\medskip
	
	For the integral bearing $\phi_-$
	$$\int_{B_\mu^{n-1}(0)}\,  \frac{(\phi_-)_j}{\|\phi_-\|^{n}}\, \widehat{d(\phi_-)_i}=\int_{B_\mu^{n-1}(0)}\,  \frac{(A_j(s)+\beta\, (\eta(y_0),\, e_j)}{
		(\|s\|^2+\beta^2)^{\frac{n}{2}}}\, \Gamma_i\, V_{n-1}(s)$$
	consisting in a linear combination with constant coefficients of integrals of the type $$\int_{B_\mu^{n-1}(0)}\,  \frac{s_l}{
		(\|s\|^2+\beta^2)^{\frac{n}{2}}}\, V_{n-1}(s)=(1)$$
	and $$\int_{B_\mu^{n-1}(0)}\,  \frac{\beta}{
		(\|s\|^2+\beta^2)^{\frac{n}{2}}}\, V_{n-1}(s)=(2).$$
	
	Using spherical coordinates $$(1)=\int_0^\mu\frac{r^{n-1}\, dr}{(\beta^2+r^2)^{\frac{n}{2}}}\, \int_{S^{n-2}_1(0)} \theta_l\, \sigma_{n-2}(\theta)$$ and $$\int_{S^{n-2}_1(0)} \theta_l\, \sigma_{n-2}(\theta)=0.$$
	
An easy argument shows the boundedness of term $(2)$ in terms of $R_0$. 
\end{proof}

Finally $$|(352)|\le\|f\|_\infty\, |\int_{B_{\frac{R_0}{2}}(y)\cap B_{\frac{R_0}{2}}(x)\cap W^c}[(R)_y-(R)_x](\zeta)\, dm(\zeta)|$$ and 

\begin{lem} If $x,\, y\in U_{\frac{R_0}{2}}\cap W$ there exists $C(n,\, R_0)>0$ such that $$\int_{B_{\frac{R_0}{2}}(y)\cap B_{\frac{R_0}{2}}(x)\cap W^c}[(R)_y-(R)_x](\zeta)\, dm(\zeta)\le C(n,\, R_0)\, \|x-y\|^\gamma$$
	
\end{lem}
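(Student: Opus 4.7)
The plan is to mirror the Stokes-theorem reduction used in the proof of Lemma \ref{tralari}, but applied now to the difference of two integrals indexed by the base point. Setting $u(z) := \int_{B_{R_0/2}(z) \cap W^c} (R_{j,\, i})_z\, dm(\zeta)$ and using $R_{j,\, i} = \partial K_j / \partial \zeta_i$, Stokes' theorem yields
$$u(z) = (-1)^{i-1}\biggl\{\int_{\partial W \cap B_{R_0/2}(z)} - \int_{W^c \cap \partial B_{R_0/2}(z)}\biggr\} (K_j)_z\, \widehat{d\zeta_i},$$
so the task reduces to bounding $|u(y) - u(x)| \leq C(n,R_0)\,\|x-y\|^\gamma$ for each of the two boundary pieces separately.

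The spherical piece $\int_{W^c \cap \partial B_{R_0/2}(z)} (K_j)_z\, \widehat{d\zeta_i}$ is the easier one: pulling back via $\zeta = z + (R_0/2)\theta$ onto $S^{n-1}_1(0)$ yields a smooth bounded integrand on the $z$-dependent region $\{\theta : z + (R_0/2)\theta \in W^c\}$. The symmetric difference of these regions between $z = x$ and $z = y$ has surface measure $O(\|x-y\|/R_0)$, and the integrand is $\ka{\infty}$ in $z$ with derivatives of size $O(1/R_0^{n-1})$, so the difference of the two spherical integrals is $O(\|x-y\|)$, which is dominated by $\|x-y\|^\gamma$ for $\|x-y\|<R_0/4$.

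For the surface piece over $\partial W$, I would exploit the $\ka{1,\gamma}$ regularity by parameterizing locally: take $\tau' \in \partial W$ closest to $\tau := (x+y)/2$, and apply Lemma \ref{geom} to express $\partial W \cap B_{R_1}(\tau')$ as the graph of a $\ka{1,\gamma}$ function $\varphi_{\tau'}$ over the tangent hyperplane at $\tau'$, satisfying $\varphi_{\tau'}(0) = 0$ and $\nabla\varphi_{\tau'}(0) = 0$. Pulling both integrals $\int_{\partial W \cap B_{R_0/2}(z)} (K_j)_z\, \widehat{d\zeta_i}$ back to this tangent hyperplane produces two planar integrals of Newtonian-type kernels against a nearly-flat graph, and their difference at $z = x$ versus $z = y$ is what remains to estimate.

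The main obstacle will be extracting the $\|x-y\|^\gamma$ factor from this planar difference. The flat-background contribution (obtained by replacing the graph by the tangent hyperplane) reduces to an explicit half-space integral of $K_j$, essentially the integral $(V)$ computed in the proof of Lemma \ref{tralari}; its dependence on $z$ is Lipschitz away from the hyperplane but degrades to $\ka{0,\gamma}$-H\"older across it, with H\"older constant depending only on $n$ and $R_0$. The correction from the curvature of $\partial W$ uses the bounds $|\varphi_{\tau'}(s)| \leq C\|s\|^{1+\gamma}$ and the $\gamma$-H\"older regularity of $\nabla\varphi_{\tau'}$, absorbed against the $\|s\|^{-n}$ singularity of $K_j$ after integration in $s$. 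The improvement over the uniform bound of Lemma \ref{tralari} comes from combining the mean-value estimate with the $\gamma$-H\"older modulus, and from carefully tracking the tangential cancellations analogous to $\int_{S^{n-2}_1(0)} \theta_l\, \sigma_{n-2}(\theta) = 0$ used at the end of that lemma; this bookkeeping is where the main technical work lies.
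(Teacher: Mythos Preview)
Your reduction to $u(z) := \int_{B_{R_0/2}(z) \cap W^c} (R)_z$ is not quite the quantity in the lemma: the domain there is the \emph{fixed} intersection $B_{R_0/2}(y)\cap B_{R_0/2}(x)\cap W^c$, and only the kernel varies. The discrepancy $[u(y)-u(x)]-I$ is an integral over the symmetric difference of the two balls intersected with $W^c$, where $\|\zeta-x\|,\|\zeta-y\|\ge R_0/4$, so the kernels are bounded and the measure is $O(R_0^{\,n-1}\|x-y\|)$; this is harmless but should be said.

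More substantively, your route---Stokes plus H\"older regularity of the resulting boundary-layer integrals---is a genuine alternative to the paper's, and the spherical piece is indeed routine. But the decisive step, the H\"older-in-$z$ estimate for $\int_{\partial W\cap B_{R_0/2}(z)} (K_j)_z\,\widehat{d\zeta_i}$, is only gestured at. A naive mean-value bound gives $\|x-y\|\int_{\partial W}\|\zeta-z\|^{-n}\,d\sigma(\zeta)$, which diverges like $|\log d(z)|$ as $z\to\partial W$; extracting $\|x-y\|^\gamma$ genuinely requires the $\ka{1,\gamma}$ flattening together with the tangential cancellation you mention. That is the heart of the matter, not bookkeeping, and your proposal stops exactly there.

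The paper avoids redoing this analysis. It keeps the volume formulation and decomposes the domain through a chain of balls centred at $\tau=\tfrac{x+y}{2}$ and at $\tau_1\in\partial W$ nearest to $x$, peeling off pieces on which $\|\zeta-x\|,\|\zeta-y\|$ are bounded below by constants depending only on $R_0,R_1$. What remains is an integral over $B_{R_1}(\tau_1)\cap W^c$. It then performs an explicit \emph{globalization}: the local $\ka{1,\gamma}$ graph of $\partial W$ near $\tau_1$ is extended, via cutoffs and cylindrical caps, to a closed $\ka{1,\gamma}$ hypersurface $\Sigma$ bounding a bounded domain $\Xi$, with $\Xi$ agreeing with $W^c$ on the inner cylinder. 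After discarding the pieces where $\Xi$ and $W^c$ differ (again far from the singularity), the residual term is $\int_\Xi[(R)_y-(R)_x]$, and the paper invokes the Main Lemma of \cite{MOV}, which gives $R[\chi_\Xi]\in\ka{\gamma}$ on each side of $\partial\Xi$ for $\ka{1,\gamma}$ domains. In effect the paper delegates the hard harmonic-analytic step to \cite{MOV}; your approach would have to reprove an equivalent statement for the single-layer potential on $\partial W$.
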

\begin{proof} Assume $d=d(x,\, \partial W)\le d(y,\, \partial\Omega)$ and consider $\tau=\frac{x+y}{2}$. In such case $$B_{\frac{R_0-\|x-y\|}{2}}(\tau)\subset B_{\frac{R_0}{2}}(y)\cap B_{\frac{R_0}{2}}(x).$$ Therefore $$\int_{B_{\frac{R_0}{2}}(y)\cap B_{\frac{R_0}{2}}(x)\cap W^c}[(R)_y-(R)_x](\zeta)\, dm(\zeta)$$
	$$=\int_{B_{\frac{R_0-\|x-y\|}{2}}(\tau)\cap W^c}[(R)_y-(R)_x](\zeta)\, dm(\zeta)$$
	$$+\int_{[(B_{\frac{R_0}{2}}(y)\cap B_{\frac{R_0}{2}}(x))\setminus B_{\frac{R_0-\|x-y\|}{2}}(\tau)]\cap W^c}[(R)_y-(R)_x](\zeta)\, dm(\zeta)=(I)+(II).$$
	
	If $\zeta\in B_{\frac{R_0-\|x-y\|}{2}}(\tau)$ then $\frac{R_0}{4}<\frac{R_0}{2}-\|x-y\|\le\|\zeta-x\|$ and the same is true for $\|\zeta-y\|$ leading to the estimate
	$$|(II)|\le C\, \|x-y\|\, m\biggl([(B_{\frac{R_0}{2}}(y)\cap B_{\frac{R_0}{2}}(x))\setminus B_{\frac{R_0-\|x-y\|}{2}}(\tau)]\cap W^c\biggr).$$
	
	Now for the term $(I)$. From the geometric lemma \ref{geom} we know the existence of $R_1>0$ independent of $x,\, y$ such that for every $u\in\partial\Omega$ there exists a function $\varphi\in\ka{1,\, \gamma}$ such that $$B_{R_1}(u)\cap W^c=\{\zeta:\ (\zeta-u,\, \eta(u))>\varphi(\Pi(\zeta-u))\}$$ where $\eta(u)$ is the normal vector to $\partial\Omega$ and $\Pi$ is the projection on the tangent hyperplane to $\partial\Omega$ at $u$.
	
	In the case of $\|x-y\|\geq\frac{R_1}{2}$ $$|(I)|\le\frac{2\, \|x-y\|}{R_1}\, \int_{B_{\frac{R_0-\|x-y\|}{2}}(\tau)\cap W^c}[|(R)_y|+|(R)_x|](\zeta)\, dm(\zeta)$$ that can be estimated by the lemma \ref{tralari}.
	
	If $\|x-y\|<\frac{R_1}{2}$ choose $\tau_1\in\partial\Omega$ such that $\|x-\tau_1\|=d$. Then $$B_{R_1}(\tau)\cap W^c=[B_{R_1}(\tau_1)\cap W^c\cap B_{\frac{R_0-\|x-y\|}{2}}(\tau)]$$
	$$\cup[(B_{R_1}(\tau_1)\cap W^c)\setminus B_{\frac{R_0-\|x-y\|}{2}}(\tau_1)]$$ and the integral $$(I)=\biggl\{\int_{B_{R_1}(\tau_1)\cap W^c\cap B_{\frac{R_0-\|x-y\|}{2}}(\tau)}$$
	$$+\int_{(B_{R_1}(\tau_1)\cap W^c)\setminus B_{\frac{R_0-\|x-y\|}{2}}(\tau)}\biggr\}[(R)_y-(R)_x](\zeta)\, dm(\zeta)=(III)+(IV).$$
	
	For the term $(IV)$:
	
	In the case of $d\geq\frac{R_1}{4}$ both $\|\zeta-x\|$ and $\|\zeta-y\|$ are bounded from below by $\frac{R_1}{4}$. If $d<\frac{R_1}{4}$ then for $\zeta\in B_{R_1}(u)\cap W^c$ we have $\|\zeta-x\|\geq\|\zeta-\tau_1\|-\|\tau_1-x\|>\frac{3\, R_1}{4}$. In both cases the integral $$|(IV)|\le\|x-y\|\, \int_{(B_{R_1}(\tau)\cap W^c)\setminus B_{\frac{R_0-\|x-y\|}{2}}(\tau_1)}\biggr\}[\frac{|(R)|_y}{\|\zeta-x\|}+\frac{|(R)_x|}{\|\zeta-y\|}]\, dm(\zeta)$$ satisfies the desired estimate.
	
	\medskip
	
	For the term $(III)$:
	
	Since $$B_{R_1}(\tau_1)\cap W^c=[B_{R_1}(\tau_1)\cap W^c\cap B_{\frac{R_0-\|x-y\|}{2}}(\tau)]\cup [(B_{R_1}(\tau_1)\cap W^c)\setminus B_{\frac{R_0-\|x-y\|}{2}}(\tau)]$$ we have 
	$$(III)=\biggl\{\int_{B_{R_1}(\tau_1)\cap W^c}-\int_{(B_{R_1}(\tau_1)\cap W^c)\setminus B_{\frac{R_0-\|x-y\|}{2}}(\tau)}\biggr\}[(R)_y-(R)_x]=(V)+(VI)$$
	
	If $\zeta\notin B_{\frac{R_0-\|x-y\|}{2}}(\tau)$ we have $\|\zeta-x\|\geq\|\zeta-\tau\|-\|\tau-x\|\geq\frac{R_0}{2}-\|x-y\|>\frac{R_0}{4}$. For $\|\zeta-y\|$ we have an identical estimate. Therefore the term $(VI)$  can be estimated in the same way as the term $(IV)$.

	For estimating the term $(V)$ we may assume that, after a rotation  and translation we have $\tau_1=0$. Then the normal vector to $\partial\Omega$ at this point is $\eta(\tau_1)=e_n$ and $\zeta=(s_1,\, \dots,\, s_{n-1},\, t)=(s,\, t)$. We also use the notation $C_0=\|\varphi\|_{\ka{1,\, \gamma}}\simeq\|\rho\|_{\ka{1,\, \gamma}}$. where $\varphi$ and $\rho$ are defining $\partial\Omega$, as in Lemma \ref{geom}.
	
	And we have:
	
	\underline{Claim}: If $$U=\{(s,\, t):\ \|s\|\le\frac{R_1}{1+C_0^2\, R_1^{2\, \gamma}},\ \varphi(s)<t<\sqrt{R_1^2-\|s\|^2}\}$$ then  $$B_{R_1}(\tau_1)\cap W^c=U\cup(\{(s,\, t):\ \frac{R_1}{1+C_0^2\, R_1^{2\, \gamma}}<\|s\|<R_1\}\cap B_{R_1}(\tau_1)\cap W^c),$$ a disjoint union.

	\underline{Proof of the claim} In fact, there exists $\mu\in(0,\, 1]$ such that if $\|s\|<\mu$ then $|\varphi(s)|<\sqrt{R_1^2-\|s\|^2}$. This is so because $|\varphi(s)|\le C_0\, \|s\|^{1+\gamma}$ and the possibility of choosing $\mu_0$ in such a way that $C_0\, \|s\|^{1+\gamma}\le\sqrt{R_1^2-\|s\|^2}$ will proves the claim.
	
	In order to fall in this case we need that $$C_0\, (\mu\, R_1)^{1+\gamma}\le R_1\, \sqrt{1-\mu^2}$$ so is $$(C_0\,  R_1^{\gamma})^2\, \mu^{2\, (1+\gamma)}\le 1-\mu^2.$$ Since $\mu<1$ then $\mu^{2\, (1+\gamma)}\le\mu^2$ and a choice of $\mu$ satisfying 
	$$(C_0\,  R_1^{\gamma})^2\, \mu^2\le 1-\mu^2$$ will be enough. But this is equivalent to $\mu\le\frac{1}{1+(C_0\,  R_1^{\gamma})^2}$. Therefore $t\in[\varphi(s),\, \sqrt{R_1^2-\|s\|^2}]$ if $\|s\|\le\frac{R_1}{1+(C_0\,  R_1^{\gamma})^2}$. Then the claim is  proved.  
	
	\medskip

	According to the claim, we have $$(V)=\biggl\{\int_{U}+\int_{B_{R_1}(\tau_1)\cap W^c)\cap\{\frac{R_1}{1+(C_0\,  R_1^{\gamma})^2}<\|s\|<R_1\}}\biggr\}[(R)_y-(R)_x]$$
	$$=(VII)+(VIII).$$
	
	We assume that $d<\frac{R_1}{4\, (1+(C_0\,  R_1^{\gamma})^2)}$. Otherwise we can proceed directly as in the case of the term $(IV)$. In this case, for $\zeta\in B_{R_1}(\tau_1)\cap W^c\cap\{\frac{R_1}{1+(C_0\,  R_1^{\gamma})^2}<\|s\|<R_1\}$ we have that $\|\zeta-x\|\geq\|\zeta-\tau_1\|-\|\tau_1-x\|>\frac{3\, R_1}{4\, (1+(C_0\,  R_1^{\gamma})^2)}$ and also that $\|\zeta-y\|\geq\|\zeta-x\|-\|y-x\|>\frac{3\, R_1}{4\, (1+(C_0\,  R_1^{\gamma})^2)}$ then proceeding as for the term $(IV)$ gives the control of $(8)$.
	
	\medskip
	
	Finally for the term $(VII)$ it is possible to modify the domain of integration in a suitable way in order to apply  Main Lemma in \cite{MOV}. For doing so we first consider $\mu_0=\frac{1}{1+(C_0\,  R_1^{\gamma})^2}$ and then the partition of the segment $[\mu_0\, R_1,\, R_1]$ in segments determined by the points $\alpha_k=\mu_0\, R_1+k\, A$ for $k=1,\, \dots,\, 4$ where $A=\frac{R_1\, (1-\mu_0)}{4}$.
	
	Next we consider a decreasing function $E\in\ka{\infty}(\R)$ such that $E(\varsigma)=1$ if $\varsigma\le\mu_0\, R_1$ and $E(\varsigma)=0$ if $\varsigma\geq\alpha_2$.
	
	The function defined in $\R^{n-1}$ by the formula $$\chi(s)=\varphi(s)\, E(\|s\|^2)$$ is in $\ka{1,\, \gamma}(\R^{n-1})$ and coincides with $\varphi$ in the ball $B_{\mu_0\, R_1}^{n-1}(0)$.
	
	Next we modify this function defining $$\psi(s)=\begin{cases}\chi(s)\ \ \ &\text{if}\ s\in B_{\alpha_3}(0)\\
		
		A-\sqrt{A^2-(\|s\|-\alpha_3)^2}\ \ \ &\text{if}\ \alpha_3\le\|s\|\le R_1.
	\end{cases}$$ We have that $\psi\in\ka{1,\, 1}$ at the points of $\partial B_{\alpha_3}(0)$ and this implies that it is in $\ka{1,\, \gamma}(B_{R_1}(0))$. The graph of this function defines an embedded open $\ka{1,\, \gamma}$ hypersurface of $\R^n$ that we will call $\Sigma_1$. 
	
	We also have the cylindric $\ka{\infty}$ closed hypersurface $$\Sigma_2=\{(s,\, t)\in\R^n:\ \|s\|=R_1,\ A\le t\le\sqrt{R_1^2-\alpha_1^2}-A\}.$$
	
	Finally we define a new function $$\varrho(s)=\sqrt{R_1^2-\alpha_2^2}+E(s)\, (\sqrt{R_1^2-\|s\|^2}-\sqrt{R_1^2-\alpha_2^2})$$
	$$=\sqrt{R_1^2-\|s\|^2}+(1-\chi(s))\, \sqrt{R_1^2-\alpha_2^2}$$ and then the function
	$$\kappa(s)=\begin{cases}\varrho(s)\ \ \ &\text{if}\ s\in B_{\alpha_3}(0)\\
		-A+\sqrt{R_1^2-\alpha_3^2}+\sqrt{A^2-(\|s\|^2-\alpha_3)^2}\ \ \ &\text{if}\ \alpha_3\le\|s\|\le R_1.
		
	\end{cases}
	$$ 
	The graph of $\kappa$ determines a $\ka{\infty}$ embedded open hypersurface $\Sigma_3$.
	
	We claim that the  closed set $$\Sigma=\Sigma_1\cup\Sigma_2\cup\Sigma_3$$ is connected and in fact it is an hypersurface defined by a function $\phi$ as the boundary of an open bounded set $\Xi$ and whose meaningful constants depend only on the meaningful constants of the function $\rho$. The procedure is in fact a well known globalization one. A variant of this procedure for a more general situation (but that can be easily adapted here) can be found in \cite{Bur}.
	
	From this decomposition  we obtain $$(VII)=\biggl\{\int_U+\int_{\Xi\cap  U^c}-\int_{U\cap\Xi^c}\}\, [(R)_y-(R)_x].$$
	
	Since we have that $$U\cap\{\|s\|\le\alpha_1\}=\Xi\cap\{\|s\|\le\alpha_1\},$$ then if $\zeta\in(\Xi\cap U^c)\cup(\Xi^c\cap U)$ we have $$\|\zeta-x\|\|\zeta\|-\|x\|\geq\|s\|-\|x\|\geq\alpha_1-d$$
	$$\geq\mu_0\, R_1+\frac{R_1\, (1-\mu_0)}{4}-\frac{R_1}{4\, (1+C_0^2\, R_1^{2\, \gamma})}=\frac{R_1}{4}+\frac{1}{4\, (1+C_0^2\, R_1^{2\, \gamma})}$$ and $$\|\zeta-y\|\geq\|\zeta-x\|-\|x-y\|$$
	$$\geq R_1\, (\frac{1}{4}+\frac{1}{2\, (1+C_0^2\, R_1^{2\, \gamma})}-\frac{R_1}{4\, (1+C_0^2\, R_1^{2\, \gamma})})=\frac{R_1}{4\, (1+C_0^2\, R_1^{2\, \gamma})}.$$ 
	
	This means that $(10)-(11)$ can be estimate just as $(8)$. Finally the direct application of the Main Lemma in \cite{MOV} provides the necessary estimate of $(9)$.

	

\end{proof}

\noindent
{\bf 4.} Finally the term 
\begin{equation*}
	\begin{split}
		(4)&=\int_{B_{\delta(x)}(x)\setminus B_{\delta(y)}(y)}\frac{f(\zeta)-f(x)}{\|\zeta-x\|^\gamma}\, 
		\|\zeta-x\|^\gamma\, (R)_x(\zeta)\\*[5pt]
		&\quad-
		\int_{B_{\delta(y)}(y)\setminus B_{\delta(x)}(x)}\frac{f(\zeta)-f(y)}{\|\zeta-y\|^\gamma}\, 
		\|\zeta-y\|^\gamma\, (R) y(\zeta)\, dm(\zeta)=(41)+(42).
	\end{split}
\end{equation*}


The term 
$$
|(41)|\le \|f\|_\gamma\, \int_{B_{\delta(x)}(x)\setminus B_{\delta(y)}(y)}\frac{dm(\zeta)}{\|\zeta-x\|^{n-\gamma}},
$$ 
and since $\|x-y\|\le\frac{R_0}{4}<\delta(y),\, \delta(x)$, then we have that
$x\in B_{\delta(y)}(y)$. Moreover, if $\|\zeta-x\|<\delta(y)-\|x-y\|$, then $\delta(y)>\|\zeta-x\|+\|x-y\|\geq\|\zeta-y\|$. This implies that 
$$
B_{\delta(y)-\|y-x\|}(x)\subset B_{\delta(y)}(y)
$$ 
and then, since the $\delta(y)-\|x-y\|<\delta(x)$ (otherwise the domain of integration is empty) the previous integral is bounded by 
\begin{equation*}
	\begin{split}
		\int_{C^{\delta(x)}_{\delta(y)-\|x-y\|}(x)}&\frac{dm(\zeta)}{\|\zeta-x\|^{n-\gamma}}\\*[5pt]
		&=\sigma(S_1^{n-1}(0))\, \int^{\delta(x)}_{\delta(y)-\|x-y\|}\frac{dr}{r^{1-\gamma}}\\*[5pt]
		&=\frac{\sigma(S_1^{n-1}(0))}{\gamma}\, \{\delta(x)^\gamma-(\delta(y)-\|x-y\|)^\gamma\}\\*[5pt]
		&=\sigma(S_1^{n-1}(0))\, \frac{\delta(x)-(\delta(y)-\|x-y\|)}{(\delta(y)-\|x-y\|+\lambda\, (\delta(x)-(\delta(y)-\|x-y\|))^{1-\gamma}}\\*[5pt]
		&\le\sigma(S_1^{n-1}(0))\, \frac{\delta(x)-\delta(y)+\|x-y\|)}{(\delta(y)-\|x-y\|)^{1-\gamma}}
		\le\sigma(S_1^{n-1}(0))\, \frac{\delta(x)-\delta(y)+\|x-y\|)}{(\frac{R_0}{4})^{1-\gamma}},
	\end{split}
\end{equation*}
where $\lambda\in(0,1)$.

The term $(42)$ is symmetrically analogous to $(41)$.

With the previous arguments we have the Riesz transform of $\phi$ and $\psi$ is H\"older at $\Omega$ or $(\bar\Omega)^c$. The following lemma completes the case of~$\partial\Omega$.

\begin{lem}
	If $W=\Omega$ or $W=(\bar\Omega)^c$ and $f\in \Lip{(\gamma, W)}$ and $\|f\|_\gamma<+\infty$, then $f$ extends to a Lipschitz function on $\bar W$, with the same Lipschitz norm.  
\end{lem}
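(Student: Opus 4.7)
The approach is the standard uniform continuity extension argument. The key observation is that the Hölder condition $|f(x)-f(y)| \le \|f\|_\gamma \|x-y\|^\gamma$ for $x,y \in W$ makes $f$ uniformly continuous on $W$, so it admits a unique continuous extension to $\bar W$, and that extension automatically preserves the Hölder seminorm.

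First I would fix $x_0 \in \partial W$ and any sequence $\{x_n\}\subset W$ with $x_n \to x_0$. The estimate
$$|f(x_n) - f(x_m)| \le \|f\|_\gamma \, \|x_n - x_m\|^\gamma$$
shows that $\{f(x_n)\}$ is a Cauchy sequence in $\R$, hence converges. Independence of the limit on the choice of sequence follows by interleaving two sequences and applying the same estimate. This lets us define $\tilde f(x_0) = \lim_{n\to\infty} f(x_n)$ for each $x_0 \in \partial W$, and $\tilde f = f$ on $W$.

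Next I would verify the preservation of the norm. Given $x, y \in \bar W$, pick sequences $x_n, y_n \in W$ with $x_n \to x$, $y_n \to y$. Then
$$|f(x_n) - f(y_n)| \le \|f\|_\gamma \, \|x_n - y_n\|^\gamma,$$
and passing to the limit using continuity of $\tilde f$ and of the norm yields
$$|\tilde f(x) - \tilde f(y)| \le \|f\|_\gamma \, \|x - y\|^\gamma.$$
Likewise $\|\tilde f\|_\infty = \|f\|_\infty$ since $\tilde f$ is a pointwise limit of values of $f$. Hence $\|\tilde f\|_{\gamma,\bar W} = \|f\|_{\gamma,W}$.

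There is no substantial obstacle here; the only point that requires mild care is checking that the limit defining $\tilde f(x_0)$ is independent of the approximating sequence, which as noted above reduces to another application of the Hölder bound. The hypothesis $W = \Omega$ or $W = (\bar\Omega)^c$ plays no essential role beyond guaranteeing that $W$ is open so that $\partial W \subset \bar W \setminus W$ consists of limit points of $W$; the argument is purely metric and works on any open subset of $\R^n$.
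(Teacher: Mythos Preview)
Your proof is correct and is the standard, cleanest route: Cauchy sequences from the H\"older bound define the extension, and passing to limits in the inequality preserves the seminorm exactly. The paper's proof reaches the same conclusion by a slightly different presentation: it first invokes uniform continuity (using compactness of $\partial W=\partial\Omega$) to obtain the extension, and then, for boundary points $x,y$ at distance less than $\frac{R_0}{4}$, verifies the H\"older bound via a three-term triangle inequality $f(x)-f(y)=(f(x)-f(x'))+(f(x')-f(y'))+(f(y')-f(y))$ with auxiliary interior points $x',y'\in W$ chosen at distance $\le\frac{\|x-y\|}{3}$ from $x,y$. Both arguments are really the same idea (approximate boundary points from the interior and use the H\"older estimate), but your limit formulation is more direct, yields the exact same norm without any constant loss, and, as you note, does not use the compactness of $\partial\Omega$ or any special structure of $W$ beyond openness.
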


\begin{proof} 
	Since $\partial W$ is compact, then $f$ is uniformly continuous in $\bar U_{\frac{R_0}{4}}$.
	
	If $x,y\in\partial\Omega$ and $\|x-y\|<\frac{R_0}{4}$, for $U_{\frac{R_0}{4}}\cap W$ we have 
	$$
	f(x)-f(y)=f(x)-f(x')+f(x')-f(y')+f(y')-f(y)=(1)+(2)+(3).
	$$  
	
	If $\|x'-x\|,\, \|y'-y\|<\min\{\delta,\, \frac{\|x-y\|}{3}\}$, then 
	$$
	(1),\, (3)\le M\, \|x-y\|^\gamma.
	$$ 
	
	Also 
	$$
	\|x'-y'\|\le \frac{5}{3}\, \|x-y\|,
	$$ 
	so 
	\begin{equation*}
		(2)\le M\, \|x-y\|^\gamma.\qedhere
	\end{equation*}
\end{proof}

Finally, we can apply this result to the boundary term in Proposition \ref{desing} and we have

\begin{cor} 
	There exists a constant $C_1$, depending only on $n$, $\gamma$ and $\Omega$ such that 
	$$
	|f(x)\, \Theta_\Omega^{\frac{R_0}{2}}(x)-f(y)\, \Theta_\Omega^{\frac{R_0}{2}}(y)|\le \|x-y\|^\gamma\, \|f\|_\gamma\, C_1.
	$$
\end{cor}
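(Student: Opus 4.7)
The plan is as follows. First observe that $\Theta_\Omega^{R_0/2}(x)=0$ whenever $d(x)>0$, so the inequality is immediate in every case except when both $x$ and $y$ lie on $\partial\Omega$: if neither does, both sides vanish, while the mixed case is meaningful only within the one-sided extension framework of the preceding lemma (the jump across $\partial\Omega$ being handled by the Appendix). I therefore reduce to $x,y\in\partial\Omega$ and split
\[
f(x)\,\Theta(x)-f(y)\,\Theta(y)=(f(x)-f(y))\,\Theta(x)+f(y)\,(\Theta(x)-\Theta(y)).
\]
The first summand is immediate: Lemma \ref{rszgl} and the $\ka{1,\gamma}$ regularity of $\partial\Omega$ give a uniform bound $|\Theta(x)|\le C_0(n,\gamma,\Omega)$, and the $\gamma$-H\"older property of $f$ supplies the factor $\|f\|_\gamma\|x-y\|^\gamma$.

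The substantive task is thus to prove that $\Theta$ itself is $\gamma$-H\"older along $\partial\Omega$ with a constant depending only on $n$, $\gamma$ and $\Omega$. I will use the simplified form from the Remark, $\Theta(x)=\int_{B_{R_0/2}(x)\cap\bar\Omega}R_{j,i}(\zeta-x)\,dm(\zeta)$, and decompose
\[
\Theta(x)-\Theta(y)=\int_{B_{R_0/2}(x)\cap B_{R_0/2}(y)\cap\bar\Omega}\bigl[R_{j,i}(\zeta-x)-R_{j,i}(\zeta-y)\bigr]\,dm(\zeta)+\mathcal{E}(x,y),
\]
where $\mathcal{E}(x,y)$ collects the contributions from the symmetric difference $B_{R_0/2}(x)\triangle B_{R_0/2}(y)$ (interpreted in the principal value sense around the midpoint $\tau=(x+y)/2$, exactly as was done for the $Q$ and $L$ terms in the proof of Proposition \ref{thm2}). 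The first (main) integral is estimated by the same mechanism as the lemma immediately following Lemma \ref{tralari}: one inserts the cancellation at $\tau$, bounds $|R_x-R_y|$ by $C\|x-y\|/\|\zeta-\lambda\|^{n+1}$ with $\lambda$ on the segment from $x$ to $y$, uses $\|\zeta-\lambda\|\ge\tfrac12\|\zeta-\tau\|$ off $B_{\|x-y\|}(\tau)$, and integrates to get the bound $C\|x-y\|^\gamma$. The remainder $\mathcal{E}(x,y)$ is controlled by Lemma \ref{tralari}, which gives the uniform bound on the cut-off Riesz integrals, combined with the estimate $|B_{R_0/2}(x)\triangle B_{R_0/2}(y)|\le C\|x-y\|$ on the symmetric difference.

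The main obstacle, as in the proof of Proposition \ref{thm2}, lies in the geometric bookkeeping: the two regions $\bar\Omega\cap\{\kappa_x>0\}$ and $\bar\Omega\cap\{\kappa_y>0\}$ entering the Riesz--Geometric lemma differ both in their centers and in the orientation of the defining half-spaces, since the outward normals $\eta(x)$ and $\eta(y)$ are themselves only $\gamma$-H\"older on $\partial\Omega$. Consequently one must split the domain of integration further according to the relative position of $\zeta$ with respect to $\partial\Omega$, $\{\kappa_x=0\}$, and $\{\kappa_y=0\}$, exactly as was done in Lemmas \ref{Aux1}--\ref{unifball} above. The $\gamma$-H\"older variation of the tangent hyperplane is precisely what produces the H\"older exponent $\gamma$ (and not $1$) on the right-hand side, and matches the regularity allowed for $\partial\Omega$ in Lemma \ref{geom}. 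Assembling these estimates gives the corollary with a constant $C_1$ depending only on $n$, $\gamma$ and $\Omega$.
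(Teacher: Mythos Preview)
Your proposal is correct and supplies exactly the details that the paper omits. The paper offers no proof of this corollary beyond the single sentence ``we can apply this result to the boundary term in Proposition~\ref{desing},'' which points back to the extension lemma and, implicitly, to the whole battery of estimates established for $Q$ and $L$ in the proof of Proposition~\ref{thm2}. Your direct argument---splitting $f(x)\Theta(x)-f(y)\Theta(y)$ into $(f(x)-f(y))\Theta(x)+f(y)(\Theta(x)-\Theta(y))$, bounding $|\Theta|$ uniformly via Lemma~\ref{rszgl}, and then proving $\Theta\in\mathrm{Lip}(\gamma,\partial\Omega)$ by decomposing $\Theta(x)-\Theta(y)$ into a common-ball piece and a symmetric-difference piece---is precisely the route the paper's own lemmas were built to serve. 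In particular, the lemma immediately after Lemma~\ref{tralari} (the $\|x-y\|^\gamma$ bound for $\int_{B_{R_0/2}(y)\cap B_{R_0/2}(x)\cap W^c}[(R)_y-(R)_x]$) is the key ingredient; note that since $\int_{B_{R_0/2}(x)}(R)_x=0$ by cancellation, the integral over $\bar\Omega$ and the integral over $\Omega^c$ differ only by a sign, so that lemma transfers directly. Your treatment of the symmetric-difference term $\mathcal{E}$ is also fine: there $\|\zeta-x\|,\|\zeta-y\|\gtrsim R_0$, so the kernel is bounded and the measure estimate $|B_{R_0/2}(x)\triangle B_{R_0/2}(y)|\le C\|x-y\|$ even yields a Lipschitz (order~$1$) bound.

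One small remark: your handling of the ``mixed case'' ($x\in\partial\Omega$, $y\notin\partial\Omega$) is a bit elliptic. The corollary, like Proposition~\ref{thm2} itself, should be read as a one-sided statement (both points in $\bar\Omega$, or both in $\R^n\setminus\Omega$), and the relevant content is the case $x,y\in\partial\Omega$, which is what you prove.
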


\section{Appendix}
\subsubsection{The jump formula for $R_{j,\, i}$}  We begin with a very precise computation we will need in the proof of the jump formula. It reffers to the singular integral resulting from the newtonian potential (defined in 2.2) integrated on the intersection of an hyperplane and a ball.

\begin{lem}\label{half} 
	
	If $x\in\R^n$ and $\eta\in S^{n-1}_1(0)$,  $\lambda\in\R\setminus\{0\}$ and $(K_j)_x(\zeta)=\frac{(\zeta-x )_j}{\|\zeta-x)\|^n}$ we have that $$ \int_{B_{2\, \lambda}(x)\cap\{(\zeta-x,\, \eta)=0\}}(K_j)_{x+\lambda\, \eta}(\zeta)\, \widehat{d\zeta_i}$$ is a real number depending only on $\eta$. We call it $\mathcal{K}_{j,\, i}(\eta)$ .

\end{lem}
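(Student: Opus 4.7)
The plan is to reduce the integral to a canonical model by successively exploiting translation and dilation invariance, after first checking that no singularity is met on the domain.

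First, I would verify that the integral is well-defined. For $\zeta$ lying on the hyperplane $\{(\zeta-x,\eta)=0\}$, a Pythagorean decomposition gives
$$\|\zeta-x-\lambda\eta\|^{2}=\|\zeta-x\|^{2}+\lambda^{2}\ge\lambda^{2},$$
so the only potential singularity of $K_{j}$, located at $\zeta=x+\lambda\eta$, stays away from the slice of integration. The integrand is therefore continuous and bounded on the compact $(n-1)$-dimensional disk $B_{2\lambda}(x)\cap\{(\zeta-x,\eta)=0\}$, and the integral is a finite real number.

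Next, I would perform the isometric change of variables $\zeta\mapsto\zeta+x$. This maps the hyperplane $\{(\zeta-x,\eta)=0\}$ to $\{(\zeta,\eta)=0\}$, carries $B_{2\lambda}(x)$ to $B_{2\lambda}(0)$, preserves every $d\zeta_{k}$ and hence $\widehat{d\zeta_{i}}$, and transforms the integrand as $(K_{j})_{x+\lambda\eta}(\zeta+x)=(K_{j})_{\lambda\eta}(\zeta)$. This eliminates the parameter $x$ entirely.

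Then I would perform the dilation $\zeta=|\lambda|\,s$ on the hyperplane. Since the hyperplane is $(n-1)$-dimensional and $d\zeta_{k}=|\lambda|\,ds_{k}$ for each coordinate, the form $\widehat{d\zeta_{i}}$ pulls back to $|\lambda|^{n-1}\widehat{ds_{i}}$. Coupled with the $(1-n)$-homogeneity in the displacement,
$$(K_{j})_{\lambda\eta}(|\lambda|s)=|\lambda|^{1-n}\,(K_{j})_{\operatorname{sgn}(\lambda)\eta}(s),$$
the factors of $|\lambda|$ cancel, leaving
$$\int_{B_{2}(0)\cap\{(s,\eta)=0\}}\frac{(s-\operatorname{sgn}(\lambda)\eta)_{j}}{\|s-\operatorname{sgn}(\lambda)\eta\|^{n}}\,\widehat{ds_{i}},$$
a universal quantity determined solely by $\eta$ (the sign of $\lambda$ may be absorbed into the definition of $\mathcal{K}_{j,i}(\eta)$, or fixed by taking $\lambda>0$ in applications, which is the case relevant for the jump formula). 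This establishes the stated independence and defines $\mathcal{K}_{j,i}(\eta)$.

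The only non-routine point is the bookkeeping with the differential form $\widehat{d\zeta_{i}}$: one must check that its restriction to the hyperplane pulls back through the dilation with the claimed factor $|\lambda|^{n-1}$, and that the coefficient indexing by $i$ is compatible with the reduction. This is a direct multilinear algebra computation using the expression $\widehat{d\zeta_{i}}=dx_{1}\wedge\cdots\wedge dx_{i-1}\wedge dx_{i+1}\wedge\cdots\wedge dx_{n}$ and the fact that each $dx_{k}$ scales by $|\lambda|$. I expect this verification to be the main---though still modest---technical obstacle in an otherwise straightforward scaling argument.
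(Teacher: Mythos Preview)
Your argument is correct and rests on the same mechanism as the paper's proof: translation invariance kills the $x$-dependence and the $(1-n)$-homogeneity of the kernel exactly balances the $\lambda^{n-1}$ Jacobian of the $(n-1)$-form under dilation, leaving a universal integral depending only on $\eta$. Your treatment of the sign of $\lambda$ is adequate, since the ball $B_{2\lambda}(x)$ forces $\lambda>0$ in any case, which is also the only situation in which the lemma is invoked.

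The execution differs from the paper's. The paper introduces an orthonormal frame $u_1,\dots,u_{n-1},\eta$ to parametrize the hyperplane, computes explicitly the pullback $\widehat{d\phi_i(s)}=\sum_k A_{i,k}\,\widehat{ds_k}$, identifies the surviving coefficient $A_{i,n}$ as a volume of a parallelepiped, performs the scaling $s=2\lambda\tau$ in these adapted coordinates, and then must argue separately that the final answer does not depend on the auxiliary choice of $u_1,\dots,u_{n-1}$ (and also observes that the odd-in-$\tau$ pieces integrate to zero). Your route stays in the ambient coordinates and uses only the intrinsic transformation laws of the kernel and the form, so no parametrization, no $A_{i,k}$, and no basis-independence check are needed. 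What the paper's longer computation buys is a step toward an explicit evaluation of $\mathcal{K}_{j,i}(\eta)$; for the lemma as stated, your shorter argument already suffices.
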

\begin{proof} Considering the mapping $\phi:\R^n\rightarrow\R^n$ given by
	$s\rightarrow x+\sum_{j=1}^{n-1}s_j\, u_j+s_n\, \eta$ where \newline $u_1,\, \dots,\, u_{n-1},\, \eta$ is an orthonormal basis for $\R^n$ we have that $$B_{2\, \lambda}(x)\cap\{(\zeta-x,\, \eta)=0\}=\phi(B_{2\, \lambda}(0)\cap\{s_n=0\})$$ and then the integral in the statement is equal to   $$\int_{B_{2\, \lambda}(0)\cap\{s_n=0\}}\frac{(\phi(s)-(x+\lambda\, \eta))_j}{\|\phi(s)-(x+\lambda\, \eta)\|^n}\, \widehat{d\phi_i(s)}=(*).$$
	
	Also $$\widehat{d\phi_i(s)}=\bigwedge_{l\neq i}d\phi_l(s)=\sum_{k=1}^n A_{i,k}(s)\, \widehat{ds_k}$$ and in our domain of integration $s_n=0$, consequently   $$(\phi(s)-(x+\lambda\, \eta))_j=\sum_{p=1}^{n-1}s_p\, (u_p,\, e_j)-\lambda\, (\eta,\, e_j)$$ and 
	$$\|\phi(s)-(x+\lambda\, \eta)\|^n=\|\sum_{l=1}^{n-1}s_l\, u_l-\lambda\, \eta\|^n=[\sum_{l=1}^{n-1}s_l^2+\lambda^2]^\frac{n}{2}$$
	
	Therefore
	$$(*)=\int_{B_{2\, \lambda}(0)\cap\{s_n=0\}}\frac{(\phi(s)-(x+\lambda\, \eta))_j}{\|\phi(s)-(x+\lambda\, \eta)\|^n}\, A_{i,\, n}(s)\, \widehat{ds_n}.$$

	If $s=2\, \lambda\, \tau$, where $\tau\in B_1^{n-1}(0)$ the integral above is equal to consider the $$\int_{B^{n-1}_{1}(0)}\frac{2\, \sum_{p=1}^{n-1} \tau_p\, (u_p,\, e_j)-(\eta,\, e_j)}{[4\, \sum_{l=1}^{n-1} \tau_l^2+1]^\frac{n}{2}}\,  A_{i,\, n}(2\, \lambda\, \tau)\,  dm(\tau),$$ but $$A_{i,\, n}(2\, \lambda\, \tau)=V^{n-1}_{\widehat{e_n}}(\widehat {u_i}),$$ the volume of the trace in the hyperplane $<e_1,\, \dots,\, e_{n-1}>$ of the parallelepiped generated by $u_1,\, \dots,\, \widehat{u_i},\, \dots, u_{n-1},\, \eta$, for $l\neq i$.
	
	Finally the integrals $$\int_{B^{n-1}_{1}(0)}\frac{\tau_p\, }{[4\, \sum_{l=1}^{n-1} \tau_l^2+1]^\frac{n}{2}}\,  dm(\tau)=0.$$ In all cases and the constants do not depend on the chosen basis, $u$, for $<\eta>^\perp$  
	and the result is a universal constant.

\end{proof}

Now we prove the jump formula.

\begin{thm}\label{jump} Let $\Omega\subset\R^n$ a region with compact boundary $\partial\Omega\in\ka{1,\, \gamma}$, where $\gamma\in(0,1)$. Let $ g\in\ka{\infty}(\R^n\setminus\partial\Omega)$ such that $\chi_\Omega\, g$ extends to a function in $Lip(\gamma,\, \bar\Omega)$ and  $\chi_{\bar\Omega^c}\, g$ extends to a function in $Lip(\gamma,\, \R^n\setminus\Omega)$. Consider $g_-$ the extension of $\chi_{\bar\Omega^c}\, g$ continued by $0$ to $\bar\Omega^c$ to $\bar\Omega^c$ and $g_+$ the extension of $\chi_{\bar\Omega^c}\, g$ continued by $0$ to $\Omega$ and define for $x\in\R^n$ the function $g(x)=\frac{1}{2}\{g_+(x)+g_-(x)\}$. Then for any $x\in\partial\Omega$ we have $$R_{j,\, i}[g](x)=\frac{1}{2}\biggl\{\lim_{y\to x;\, y\in\Omega} (\chi_\Omega\,  R_{j,\, i}[g])(y)+\lim_{y\to x;\, y\in\R^n\setminus\Omega} (\chi_{\R^n\setminus \bar\Omega}\, R_{j,\, i}[g])(y)\biggr\}
	$$ 
\end{thm}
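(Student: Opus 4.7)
The plan is to exploit linearity to reduce to a pointwise jump identity for each of the two ``one--sided'' pieces $g_+$ and $g_-$ separately. Since the principal values $R_{j,i}[g_\pm](x)$ at $x\in\partial\Omega$ both exist by Proposition~\ref{desing}, linearity gives $R_{j,i}[g](x)=\tfrac12\bigl(R_{j,i}[g_+](x)+R_{j,i}[g_-](x)\bigr)$. Moreover, Proposition~\ref{thm2} guarantees that $\chi_\Omega\,R_{j,i}[g_\pm]$ extends continuously from $\Omega$ to $\bar\Omega$ and that $\chi_{\R^n\setminus\bar\Omega}\,R_{j,i}[g_\pm]$ extends continuously from $\R^n\setminus\bar\Omega$ to its closure, so the one-sided limits in the statement exist and split accordingly. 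The statement therefore reduces to proving, for each $h\in\{g_+,g_-\}$, the symmetric pointwise jump identity
\[
2\,R_{j,i}[h](x)\;=\;\lim_{y\to x,\,y\in\Omega} R_{j,i}[h](y)\;+\;\lim_{y\to x,\,y\in\R^n\setminus\bar\Omega} R_{j,i}[h](y).
\]
We focus on $h=g_+$; the case of $h=g_-$ is entirely analogous.

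Fix $0<r<R_0/2$. For $y$ close to $x$, split the integral defining $R_{j,i}[h](y)$ into a far part over $\R^n\setminus B_r(x)$ and a near part over $B_r(x)\cap\bar\Omega$ (using that $h=g_+$ vanishes outside $\bar\Omega$). The far part is continuous in $y$ and converges uniformly to the corresponding far $Q$-term of $R_{j,i}[h](x)$ from Proposition~\ref{desing}. In the near part, write $h(\zeta)=h(x)+(h(\zeta)-h(x))$: the H\"older difference yields a weakly singular integrand of order $\|\zeta-y\|^{\gamma-n}$, continuous in $y$ by dominated convergence, whose limit coincides with the $L$-term of the pv decomposition. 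All of the one-sided jump is therefore concentrated in the constant piece $h(x)\,J(y)$ with
\[
J(y)\;:=\;\int_{B_r(x)\cap\bar\Omega} R_{j,i}(\zeta-y)\,dm(\zeta),
\]
and the claim reduces to the purely geometric identity
\[
\lim_{y\to x,\,y\in\Omega} J(y)\;+\;\lim_{y\to x,\,y\in\R^n\setminus\bar\Omega} J(y)\;=\;2\,\Theta^{R_0/2}_{j,i;\,\Omega}(x),
\]
the right hand side being exactly the boundary term in the pv decomposition of $R_{j,i}[h](x)$.

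The key one-sided computation proceeds via Stokes' theorem, exactly as in the proof of Lemma~\ref{unifball}: using $R_{j,i}=\partial K_j/\partial x_i$ with $K_j$ the Newtonian kernel,
\[
J(y)\;=\;(-1)^{i-1}\!\int_{\partial(B_r(x)\cap\bar\Omega)} K_j(\zeta-y)\,\widehat{d\zeta_i}.
\]
The spherical cap $\partial B_r(x)\cap\bar\Omega$ contributes continuously in $y$, adding the same amount to both one-sided limits. For the patch $B_r(x)\cap\partial\Omega$, Lemma~\ref{geom} lets one parametrize $\partial\Omega$ near $x$ as the graph of a $\ka{1,\gamma}$ function $\varphi$ over the tangent hyperplane $T_x\partial\Omega$ with $\varphi(0)=0$ and $\nabla\varphi(0)=0$; the contribution of the $\|s\|^\gamma$ deviation of the graph from this hyperplane is a weakly singular integral, continuous in $y$, and identical in both one-sided limits. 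The remaining ``flat'' tangent-disc integral is precisely the quantity evaluated in Lemma~\ref{half}: its interior and exterior limits equal the constants $\mathcal{K}_{j,i}(\eta(x))$ and $\mathcal{K}_{j,i}(-\eta(x))$ respectively, and the evenness of the Riesz kernel gives $\mathcal{K}_{j,i}(\eta)=\mathcal{K}_{j,i}(-\eta)$; combined with the cancellation $\text{p.v.}\int_{B_r(x)} R_{j,i}\,dm=0$, this allows one to recognise the sum of those two contributions as exactly twice $\Theta^{R_0/2}_{j,i;\,\Omega}(x)$. The main obstacle is this last Stokes-plus-graph bookkeeping: tracking the signs in the definition of $\Theta^{R_0/2}_{j,i;\,\Omega}(x)$ given in Proposition~\ref{desing} (the half-spaces $\{\kappa_x>0\}$ and $\{\kappa_x<0\}$) and checking that they align with the two flat half-disc limits produced by Lemma~\ref{half}. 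All other contributions are continuous across $\partial\Omega$ and drop out of the jump identity, so the entire proof concentrates in this single geometric matching.
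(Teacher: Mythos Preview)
Your overall strategy---split into far and near parts, show everything is continuous across $\partial\Omega$ except the ``constant times geometric integral'' piece, then handle that piece via Stokes' theorem and Lemma~\ref{half}---is exactly the paper's approach. Two of your intermediate claims, however, are incorrect as written.

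First, your near H\"older piece is not weakly singular. When $y$ lies on the same side of $\partial\Omega$ as the support of $h$ (e.g.\ $y\in\Omega$ for $h=g_-$; note that you have interchanged the roles of $g_+$ and $g_-$ in your write--up, since $g_+$ vanishes \emph{inside} $\Omega$), the integrand $(h(\zeta)-h(x))\,R_{j,i}(\zeta-y)$ is genuinely singular at $\zeta=y$: the H\"older bound gives $|h(\zeta)-h(x)|\le C\,\|\zeta-x\|^\gamma$, not $\|\zeta-y\|^\gamma$, and near $\zeta=y$ the difference $h(\zeta)-h(x)$ is essentially the nonzero constant $h(y)-h(x)$. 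The paper avoids this by centering the subtraction at $y$, i.e.\ writing $h(\zeta)-h(y)$, which \emph{is} weakly singular at $y$; the leftover constant $h(y)-h(x)$ multiplies $J(y)$, and since $J(y)$ stays bounded and $h(y)\to h(x)$, this extra term vanishes in the limit. With $y$ on the opposite side from the support of $h$ there is no singularity at all, so the issue is asymmetric between the two one--sided limits.

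Second, and more substantively, the claim $\mathcal K_{j,i}(\eta)=\mathcal K_{j,i}(-\eta)$ is false. The integral defining $\mathcal K_{j,i}$ in Lemma~\ref{half} involves the Newtonian kernel $K_j$, which is odd, not the even Riesz kernel $R_{j,i}$. For $\eta=e_n$ (flat disc in $\{s_n=0\}$, $y_\pm=\pm\lambda e_n$) one computes directly $\mathcal K_{n,n}(e_n)=-C_n$ and $\mathcal K_{n,n}(-e_n)=+C_n$ for a positive constant $C_n$; the two one--sided flat--disc contributions are \emph{opposite}, not equal. What the argument actually needs---and what the paper uses---is that the \emph{average} $\tfrac12\bigl(\mathcal K_{j,i}(\eta)+\mathcal K_{j,i}(-\eta)\bigr)$ coincides with the principal value at $x$ of the same flat--disc boundary integral. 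This follows because the symmetrized kernel $(K_j)_{x-\lambda\eta}+(K_j)_{x+\lambda\eta}$, restricted to the tangent disc, converges to $2\,(K_j)_x$ with an integrable dominant; the paper makes exactly this symmetrization explicit in the last displayed formula of its final lemma. Once you replace your equality of the two flat--disc limits by this averaging identity, the matching with $2\,\Theta^{R_0/2}_{j,i;\Omega}(x)$ goes through.
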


\begin{rmk} Other kinds of jump formulas for Calder\'on--Zygmund operators in potential theory appear in \cite{HoMiTa} and \cite{Tol}. 
\end{rmk}

\begin{proof}

	As seen in section 4 we have that $\chi_\Omega\, R_{j,\ i}[g]$ extends to a function in $\operatorname{Lip}_{\gamma}(\bar\Omega)$ and  $\chi_{\R^n\setminus \bar\Omega}\, R_{j,\ i}[g]$ extends to a function in $\operatorname{Lip}_{\gamma}(\R^n\setminus\Omega)$ then both the limits $\lim_{y\to x;\, y\in\Omega} \chi_\Omega\, R_{j,\ i}[g](y)$ and \newline $\lim_{y\to x;\, y\in\R^n\setminus\Omega} \chi_{\R^n\setminus \bar\Omega}\, R_{j,\ i}[g](y)$  exist and we can choose $y=x\pm\lambda\eta(x)$, where $\eta(x)$ is the unit vector, normal exterior to $\partial\Omega$ at $x$.
	
	
	Also, if $g_\pm$ are the Lipschitz extensions of $g$ to $\Omega^c$ and $\bar\Omega$, respectively, we have, for $y\in B_{\frac{R_0}{8}}(x)$,and fixed $x\in\partial\Omega$ the following facts	
	\begin{itemize}
		\item If $y\in\Omega$, then 
		\begin{equation*}
			\begin{split}
				R_{j,\ i}[g_-](y)&=\int_\Omega (g_--g_-(y))\, (R_{j,\ i})_y+g_-(y)\, \int_\Omega (R_{j,\ i})_y\\*[5pt]
				&=\int_{\Omega\setminus B_{2\, \|x-y\|}(x)} (g_--g_-(y))\, (R_{j,\ i})_y\\*[5pt]
				&\quad+\int_{\Omega\cap B_{2\, \|x-y\|}(x)} (g_--g_-(y))\, (R_{j,\ i})_y+g_-(y)\, \int_\Omega (R_{j,\ i})_y\\*[5pt]
				&=(I)(y)+(II)(y)+g_-(y)\, (III)(y).
			\end{split}
		\end{equation*}

		For the integral $(I)(y)$, we have immediately that, for any fixed $\zeta$, 
		$$
		\chi_{\Omega\setminus B_{2\, \|x-y\|}(x)}(\zeta)\,  (g_-(\zeta)-g_-(y))\, (R_{j,\ i})_y(\zeta)\to_{y\to x}\chi_\Omega(\zeta)\,  (g_-(\zeta)-g_-(x))\, (R_{j,\ i})_x(\zeta),
		$$ 
		and also 
		\begin{equation*}
			|\chi_{\Omega\setminus B_{2\, \|x-y\|}(x)}(\zeta)\,  (g_-(\zeta)-g_-(y))\, (R_{j,\ i})_y(\zeta)|
			\le \|g_-\|_{\Lip{(\gamma,\, \bar\Omega)}}\frac{2}{\|\zeta-x\|^{n-\gamma}},
		\end{equation*} 
		and by the dominated convergence theorem, 
		$$
		(I)(y)\to_{y\to x}\int_\Omega (g_--g_-(x))\, (R_{j,\ i})_x.
		$$ 
		
		
		The term $(II)(y)$ is controlled by $$\|g_-\|_{\Lip{(\gamma,\, \bar\Omega)}}\, \int_{\Omega\cap B_{2\, \|x-y\|}(x)} \frac{1}{\|\zeta-y\|^{n-\gamma}}\, dm(\zeta)\lesssim\|g_-\|_{\Lip{(\gamma,\, \bar\Omega)}}\, \, \int_0^{3\, \|x-y\|}\frac{1}{r^{1-\gamma}}\, dr$$
		$$\lesssim\|g_-\|_{\Lip{(\gamma,\, \bar\Omega)}}\, \|x-y\|^\gamma,
		\to_{y\to x}0.
		$$

		Also, if $\varrho\in\mathcal{D}(\R^n)$ such that $\varrho\equiv1$ in a ball containing $\bar\Omega$, then
		\begin{equation*}
			\begin{split}
				R_{j,\ i}[g_+](y)&=R_{j,\ i}[(1-\varrho)\, g_+](y)+R_{j,\ i}[\varrho\, g_+](y)\\*[5pt]
				&=(IV)(y)+\int_{\R^n\setminus(\Omega\cup B_{2\, \|x-y\|}(x))} \varrho\, ( g_+-g_+(x))\, (R_{j,\ i})_y\\*[5pt]
				&\quad+\int_{\Omega^c\cap B_{2\, \|x-y\|}(x)} \varrho\, (g_+- g_+(x))\, (R_{j,\ i})_y+g_+(x)\, \int_{\Omega^c} \varrho\, (R_{j,\ i})_y\\*[5pt]
				&=(IV)(y)+(V)(y)+(VI)(y)+g_+(x)\, (VII)(y).
			\end{split}
		\end{equation*}

		It is immediate that 
		$$
		\lim_{y\to x}(IV)(y)=R_{j,\ i}[(1-\varrho)\, g_+](x).
		$$
		
		
		By arguments similar to those used for $(I)(y)$, we have that 
		$$
		\lim_{y\to x}(V)(y)=\int_{\bar\Omega^c} \varrho\, (g_+-g_+(x))\, (R_{j,\ i})_x,
		$$ 
		and, analogously to $(II)(y)$, we have that 
		$$
		\lim_{y\to x}(VI)(y)=0.
		$$

		So, as all limits exist, we have 
		\begin{equation*}
			\begin{split}
				2\, \lim_{y\to x;\, y\in\Omega}& \chi_\Omega\, R_{j,\ i}[ g](y)\\*[5pt]
				&=\int_\Omega (g_--g_-(x))\, (R_{j,\ i})_x+R_{j,\ i}[(1-\varrho)\, g_+](x)\\*[5pt]
				&\quad+\int_{\bar\Omega^c} \varrho\, (g_+-g_+(x))\, (R_{j,\ i})_x+g_-(x) \lim_{y\to x}\!\int_\Omega (R_{j,\ i})_y\!+\!g_+(x) \lim_{y\to x}\!\int_{\bar\Omega^c} \varrho\, (R_{j,\ i})_y\!\\*[5pt]
				&=R_{j,\ i}[g_-](x)+R_{j,\ i}[g_+](x)+g_-(x)\biggl\{\lim_{y\to x}\int_\Omega (R_{j,\ i})_y-\int_\Omega (R_{j,\ i})_x\biggr\}\\*[5pt]
				&\quad+g_+(x)\biggl \{\lim_{y\to x}\int_{\bar\Omega^c} \varrho\, (R_{j,\ i})_y-\int_{\bar\Omega^c} \varrho\, (R_{j,\ i})_x\biggr\}\\*[5pt]
				&=2\, R_{j,\ i}[g](x)+g_-(x)\biggl \{\lim_{y\to x}\int_\Omega (R_{j,\ i})_y-\int_\Omega (R_{j,\ i})_x\biggr\}\\*[5pt]
				&\quad+g_+(x)\biggl\{\lim_{y\to x}\int_{\bar\Omega^c} \varrho\, (R_{j,\ i})_y-\int_{\bar\Omega^c} \varrho\, (R_{j,\ i})_x\biggr\}.
			\end{split}
		\end{equation*}

		\item If $y\notin\bar\Omega$, then, in a similar way, we have
		\begin{equation*}
			\begin{split}
				2\, \lim_{y\to x;\, y\in\bar\Omega^c} \chi_{\bar\Omega^c}\,  R_{j,\ i}[g](y)&=2\, R_{j,\ i}[g](z)+g_-(z)\biggl \{\lim_{y\to x}\int_{\Omega} (R_{j,\ i})_y-\int_\Omega (R_{j,\ i})_x\biggr\}\\*[5pt]
				&\quad+g_+(z)\biggl\{\lim_{y\to x}\int_{\bar\Omega^c} \varrho\, (R_{j,\ i})_y-\int_{\bar\Omega^c} \varrho\, (R_{j,\ i})_x\biggr\}.
			\end{split}
		\end{equation*}

		\item Then, for any $x\in\partial\Omega$ we have 
		\begin{equation*}
			\begin{split}
				&\frac{1}{2}\biggl \{\lim_{y\to x;\, y\in\Omega} \chi_\Omega\,  R_{j,\ i}(y)+\lim_{y\to x;\, y\in\R^n\setminus\Omega} \chi_{\R^n\setminus \bar\Omega}\, R_{j,\ i}[g](y)\biggr\}=R_{j,\ i}[g](x)\\*[5pt]
				&\quad+\frac{1}{2}\, g_-(x)\biggl \{\lim_{y\to x;\, y\in\Omega}\int_\Omega (R_{j,\ i})_y-\int_\Omega\, (R_{j,\ i})_x\biggr\}\\*[5pt]
				&\quad+\frac{1}{2}\, g_+(x)\biggl\{\lim_{y\to x;\, y\in\Omega}\int_{\bar\Omega^c}\varrho\,  (R_{j,\ i})_y-\int_{\bar\Omega^c}\varrho\,  (R_{j,\ i})_x\biggr\}\\*[5pt]
				&\quad+\frac{1}{2}\, g_-(x)\biggl \{\lim_{y\to x;\, x\notin\bar\Omega}\int_{\Omega} (R_{j,\ i})_y-\int_\Omega (R_{j,\ i})_x\biggr\}\\*[5pt]
				&\quad+
				\frac{1}{2}\, g_+(x)\biggl\{\lim_{y\to x;\, x\notin\bar\Omega}\int_{\bar\Omega^c}\varrho\,  (R_{j,\ i})_y-\int_{\bar\Omega^c}\varrho\,  (R_{j,\ i})_x\biggr\}\\*[5pt]
				&=R_{j,\ i}[g](x)+g_-(x)\biggl(\frac{1}{2}\biggl \{\lim_{y\to x;\, y\in\Omega}\int_\Omega (R_{j,\ i})_y+\lim_{y\to x;\, x\notin\bar\Omega}\int_{\Omega} (R_{j,\ i})_y\biggr\}-\int_\Omega (R_{j,\ i})_x\biggr)\\*[5pt]
				&\quad+g_+(x)\biggl (\frac{1}{2}\biggl \{\lim_{y\to x;\, y\in\Omega}\int_{\bar\Omega^c}\varrho\,  (R_{j,\ i})_y+\lim_{y\to x;\, x\notin\bar\Omega}\int_{\bar\Omega^c}\varrho\,  (R_{j,\ i})_y\biggr\}-\int_{\bar\Omega^c}\varrho\,  (R_{j,\ i})_x\biggr).
			\end{split}
		\end{equation*} 
		
		And the lemma below finishes the proof of Theorem \ref{jump}.
	\end{itemize}
	
\end{proof}

\begin{lem} Let $W\subset\R^n$ a domain with compact $\ka{1,\gamma}$ boundary.Let $x\in\partial W$ and $\eta=\eta(x)$ be the normal exterior vector at $x$.  Consider the points $y=x\pm\lambda\, \eta$. 
	
	For $h\in\mathcal{D}(\R^n)$ we have 
	$$
	\frac{1}{2}\biggl \{\lim_{\lambda\to 0;\, y=x-\lambda\, \eta}\int_{W}h\,  (R_{j,\ i})_y+\lim_{\lambda\to 0;\, y=x+\lambda\, \eta}\int_{W}h\,  (R_{j,\ i})_y\biggr\}=\int_{W}h\,  (R_{j,\ i})_x.
	$$
\end{lem}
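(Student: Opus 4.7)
The plan is to split $h$ into its value at $x$ plus a remainder that vanishes there, use Stokes' theorem to convert the volume integrals into boundary integrals, and reduce the identity to a tangent-plane computation governed by Lemma \ref{half}.

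First, I would write $h = h(x) + \tilde h$ with $\tilde h(\zeta) = h(\zeta) - h(x)$. The smoothness of $h$ yields $|\tilde h(\zeta)| \le C\|\zeta - x\|$, so the integrand $\tilde h(\zeta)R_{j,i}(\zeta - y_\pm)$ is absolutely integrable on $W$ uniformly for $\lambda$ small: splitting $W$ at radius $2\lambda$ around $x$, the inner part contributes $O(\lambda)$ (using $|\tilde h| \le C\lambda$ and integrability of $\|\zeta - y_\pm\|^{-n}$ on a fixed ball), while on the outer part $\|\zeta - y_\pm\| \ge \|\zeta - x\|/2$ gives uniform domination by $C\|\zeta - x\|^{1-n}$. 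Dominated convergence then shows that both one-sided limits of $\int_W \tilde h\,R_{j,i}(\zeta - y_\pm)\,dm$ equal the absolutely convergent integral $\int_W \tilde h(\zeta) R_{j,i}(\zeta - x)\,dm$, which coincides with its own principal value. Thus the $\tilde h$ contribution averages trivially, and the problem reduces to the constant piece.

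For this constant piece, I would apply Stokes' theorem to the identity $R_{j,i}(\zeta - y)V = d_\zeta\bigl[(-1)^{i-1}K_j(\zeta - y)\widehat{d\zeta_i}\bigr]$. Excising a small ball $B_\epsilon(y)$ when $y \in W$ and letting $\epsilon \to 0$ yields, for $y \notin \partial W$,
\[
\int_W R_{j,i}(\zeta - y)\,dm = (-1)^{i-1}\int_{\partial W} K_j(\zeta - y)\,\widehat{d\zeta_i} - c_n\delta_{ji}|B_1|\,\chi_W(y),
\]
where the constant comes from $\int_{S^{n-1}}\theta_j\widehat{d\theta_i} = \delta_{ji}(-1)^{i-1}|B_1|$. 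The same computation on $W\setminus B_\epsilon(x)$ for $x\in \partial W$ gives the identity with $\chi_W(y)$ replaced by $\tfrac12$, since the $\ka{1,\gamma}$ regularity makes the excised half-sphere tend to the hemisphere $\{(\theta,\eta)<0\}$, whose contribution is exactly half of the full sphere by antipodal symmetry of $\theta_j\widehat{d\theta_i}$. Because exactly one of $y_\pm$ lies in $W$ for small $\lambda$, the bulk constants average to $\tfrac12 c_n\delta_{ji}|B_1|$ and the identity reduces to showing
\[
\lim_{\lambda\to 0^+}\tfrac12\int_{\partial W}[K_j(\zeta - y_+)+K_j(\zeta - y_-)]\widehat{d\zeta_i} = \mathrm{p.v.}\int_{\partial W} K_j(\zeta - x)\widehat{d\zeta_i}.
\]

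Finally, for this surface identity I would localize: outside $B_r(x)$, $K_j(\cdot - y)$ is smooth uniformly in small $\lambda$, so dominated convergence gives the limit there. On $\partial W \cap B_r(x)$, using the $\ka{1,\gamma}$ graph parametrization $\zeta = x + s + \varphi(s)\eta$ with $\varphi(0)=0$, $\nabla\varphi(0)=0$ and $|\varphi(s)| \le C\|s\|^{1+\gamma}$, the surface integral agrees with the tangent-plane integral up to an $O(r^\gamma)$ error uniform in $\lambda$. On the tangent plane $T$, the direct computation
\[
K_j(\zeta - y_+) + K_j(\zeta - y_-) = \frac{2c_n(\zeta - x)_j}{(\|\zeta - x\|^2 + \lambda^2)^{n/2}}, \qquad \zeta \in T,
\]
is odd in the tangential component of $\zeta - x$, so its integral against $\widehat{d\zeta_i}$ over any ball centered at $x$ vanishes; Lemma \ref{half} precisely expresses this vanishing for the inner piece $T \cap B_{2\lambda}(x)$ as $\mathcal K_{j,i}(\eta) + \mathcal K_{j,i}(-\eta) = 0$. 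The principal value of $K_j(\zeta - x)\widehat{d\zeta_i}$ on $T$ likewise vanishes by the same oddness argument, so the tangent-plane sides of the identity agree identically. Sending $r \to 0$ after $\lambda \to 0$ concludes the proof. The principal obstacle is the uniform-in-$\lambda$ control of the $\ka{1,\gamma}$ discrepancy between $\partial W$ and its tangent plane across the two sides $y_+$ and $y_-$, requiring the Hölder bound for the defining graph function to make the error terms vanish in the double limit.
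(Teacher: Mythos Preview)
Your strategy is sound and close in spirit to the paper's, but there are two gaps you should repair.

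The more serious one: the split $h = h(x) + \tilde h$ destroys compact support, so when $W$ is the unbounded exterior of a compact region (permitted by the hypothesis ``compact $\ka{1,\gamma}$ boundary''), neither the constant piece $h(x)\int_W (R_{j,i})_y$ nor your dominating function $\|\zeta-x\|^{1-n}$ for the $\tilde h$ piece is integrable at infinity. The paper sidesteps this by integrating by parts onto $h$ rather than subtracting $h(x)$: this keeps $\partial_i h$ compactly supported and replaces the singular $R_{j,i}$ by the locally integrable $K_j$, so nothing is asked of the behaviour at infinity. Your argument is easily repaired by first intersecting $W$ with a large ball containing $\operatorname{supp} h$; the extra boundary component lies far from $x$ and is harmless in the rest of the proof.

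The minor one: your claim that the inner contribution is $O(\lambda)$ by ``integrability of $\|\zeta-y_\pm\|^{-n}$ on a fixed ball'' is wrong as stated, since that function is not locally integrable near $y_\pm$. For $y_-\in W$ the inner integral is a principal value; write $\tilde h = \tilde h(y_-) + (\tilde h-\tilde h(y_-))$, observe $|\tilde h(y_-)|=O(\lambda)$ multiplies a scale-invariant (hence uniformly bounded) p.v.\ over the rescaled half-ball, and the remainder is absolutely $O(\lambda)$ since $|\tilde h(\zeta)-\tilde h(y_-)|\le C\|\zeta-y_-\|$.

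Apart from these, your route differs from the paper mainly in bookkeeping: you track the bulk term $-c_n\delta_{ij}|B_1|\chi_W(y)$ explicitly and average it against the hemisphere contribution $\tfrac12$, whereas the paper's integration-by-parts hides this in the volume term $-\int_W\partial_i h\,(K_j)_y$. Both approaches then reduce to the same boundary integral of $K_j$ and conclude via the tangent-plane approximation; your observation that the averaged tangent-plane integrand $K_j(\zeta-y_+)+K_j(\zeta-y_-)$ is odd (equivalently $\mathcal K_{j,i}(\eta)+\mathcal K_{j,i}(-\eta)=0$) is a clean variant of the paper's direct invocation of Lemma~\ref{half} on each side separately.
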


\begin{proof} 
	First of all, for $y\notin\partial W$, using Stokes theorem we have  
	\begin{equation*}
		\begin{split}
			&\int_{W}h\,  (R_{j,\ i})_y =\int_{W}h(\zeta)\, d\biggl((-1)^{i-1}\, (K_j)_y\, \widehat{d\zeta_i}\biggr)\\*[5pt]
			&=-\int_{W}\frac{\partial h}{\partial\zeta_i}(\zeta)\, (K_j)_y \, V(\zeta)+\int_{\partial W}h(\zeta)\, (-1)^{i-1}\, (K_j)_y\, \widehat{d\zeta_i} =(1)+(-1)^{i-1}\, c_n\  (2).
		\end{split}
	\end{equation*} 
	
	The integral 
	$$(2)=
	\int_{\partial W}h(\zeta)\, \frac{(\zeta-y)_j}{\|\zeta-y\|^n}\, \widehat{d\zeta_i}=\int_{\partial W\setminus B_{2\, \|x-y\|}(x)}h(\zeta)\, \frac{(\zeta-y)_j}{\|\zeta-y\|^n}\, \widehat{d\zeta_i}$$
	$$+\int_{\partial W\cap B_{2\, \|x-y\|}(x)}h(\zeta)\, \frac{(\zeta-y)_j}{\|\zeta-y\|^n}\, \widehat{d\zeta_i}=(21)+(22).
	$$ 
	
	As consequence of Lemma 11 we have that the integral of $K_j$ on an hypersurface near a given point can be approximated locally by the integral over the tangent hyperplane across this point.
		
	\begin{equation*}
		\begin{split}
			&\int_{\partial W\cap B_{2\, \|x-y\|}(x)} \frac{(\zeta-y)_j}{\|\zeta-y\|^n}\, \widehat{d\zeta_i}=\int_{\{\kappa_x=0\}\cap B_{2\, \|x-y\|}(x)}\frac{(\zeta-y)_j}{\|\zeta-y\|^n}\, \widehat{d\zeta_i}\\*[5pt]
			&\quad+\biggl\{\int_{\partial W\cap B_{2\, \|x-y\|}(x)} -\int_{\{\kappa_x=0\}\cap B_{2\, \|x-y\|}(x)}\biggr\}\, \frac{(\zeta-y)_j}{\|\zeta-y\|^n}\, \widehat{d\zeta_i}\\*[5pt]
			&=(3)+(4),
		\end{split}
	\end{equation*} 
	and by the Stokes formula the second term is 
	\begin{equation*}
		\begin{split}
			(4)&=-\biggl\{\int_{\{\kappa_x<0\}\cap W^c\cap B_{2\, \|x-y\|}(x)}+\int_{\{\kappa_x>0\}\cap W\cap B_{2\, \|x-y\|}(x)}\biggr\}\,  \frac{(\zeta-y)_j}{\|\zeta-y\|^n}\, \widehat{d\zeta_i}\\*[5pt]
			&\quad-\biggl\{\int_{\{\kappa_x<0\}\cap W^c\cap \partial B_{2\, \|x-y\|}(x)}+\int_{\{\kappa_x>0\}\cap W\cap \partial B_{2\, \|x-y\|}(x)}\biggr\} \frac{(\zeta-y)_j}{\|\zeta-y\|^n}\, \widehat{d\zeta_i}.
		\end{split}
	\end{equation*}

	All the integrals are well defined for  $y=x\pm\lambda\, \eta$, and the  Riesz-geometric lemma and the facts that both 
	$\{\kappa_x>0\}\cap W\cap \partial B_{2\, \|x-y\|}(x)$ and $\{\kappa_x<0\}\cap W\cap \partial B_{2\, \|x-y\|}(x)$ tend to $0$ as $r$ tends to $0$, imply that 
	$$
	\lim_{y\to x}(4)=0.
	$$ 
	
	After a translation and a rotation and use of lemma \ref{half}, 
	$$
	\lim_{y\to x}(3)=\mathcal{K}_{j,\, i}(\frac{\nabla\rho}{\|\nabla\rho\|})\stackrel{\text{def}}{=}\mathcal{K}_{j,\, i}(\rho).
	$$

	Then, 
	\begin{multline*}
		\frac{1}{2}\, \biggl \{\lim_{\lambda\to 0;\, y=x-\lambda\, \eta}\int_{W}h\,  (R_{j,\ i})_y+\lim_{\lambda\to 0;\, y=x+\lambda\, \eta}\int_{W}h\,  (R_{j,\ i})_y\biggr\}\\*[5pt]
		=-\frac{1}{2}\biggl \{\lim_{\lambda\to 0}\int_{W}\frac{\partial h}{\partial\zeta_i}(\zeta)\, (K_j)_{x-\lambda\, \eta} \, V(\zeta)
		+\lim_{\lambda\to 0}\int_{W}\frac{\partial h}{\partial\zeta_i}(\zeta)\, (K_j)_{x+\lambda\, \eta} \, V(\zeta)\biggr\}\\*[5pt]
		+\frac{(-1)^{i-1}}{2}\, \biggl\{\lim_{\lambda\to 0}\int_{\partial W}h(\zeta)\, (K_j)_{x-\lambda\, \eta}\, \widehat{d\zeta_i}+\lim_{\lambda\to 0}\int_{\partial W}h(\zeta)\, (K_j)_{x+\lambda\, \eta}\, \widehat{d\zeta_i}\biggr\}\\*[5pt]
		=-\int_{W}\frac{\partial h}{\partial\zeta_i}(\zeta)\, (K_j)_x \, V(\zeta)
		+\frac{(-1)^{i-1}}{2}\, \lim_{\lambda\to 0}\int_{\partial W}h(\zeta)\, [(K_j)_{x-\lambda\, \eta}+(K_j)_{x+\lambda\, \eta}]\, \widehat{d\zeta_i}.
	\end{multline*} because $$(K_j)_{x-\lambda\, \eta}+(K_j)_{x+\lambda\, \eta}=c_n\, [\frac{(\zeta-(x-\lambda\, \eta))_j}{\|\zeta-(x-\lambda\, \eta)\|^n}+\frac{(\zeta-(x+\lambda\, \eta))_j}{\|\zeta-(x+\lambda\, \eta)\|^n}]$$

	On the other hand $$\int_{W}h\,  (R_{j,\ i})_x=\lim_{\lambda\to 0}\int_{W\setminus B_{2\, \lambda}(x)}h\,  (R_{j,\ i})_x=\lim_{\lambda\to 0} I_\lambda$$ and using the Stokes theorem we have $$I_\lambda=(-1)^{i-1}\, \int_{\partial(W\setminus B_{2\, \lambda}(x)\, )}h\,  (K_j)_y\, \widehat{d\zeta_j}-\int_{W\setminus B_{2\, \lambda}(x)}\frac{\partial h}{\partial\zeta_i}(\zeta)\, \, (K_j)_y\, V(\zeta)$$
	$$=(-1)^{i-1}\, \int_{(\partial W)\cap B_{2\, \lambda}(x)^c}h\,  (K_j)_y\, \widehat{d\zeta_j}+(-1)^{i-1}\, \int_{(\partial B_{2\, \lambda}(x))\cap W}h\,  (K_j)_y\, \widehat{d\zeta_j}$$
	$$-\int_{W\setminus B_{2\, \lambda}(x)}\frac{\partial h}{\partial\zeta_i}(\zeta)\, \, (K_j)_y\, V(\zeta)$$ and $$\lim_{\lambda\to 0} I_\lambda=-\int_{W}\frac{\partial h}{\partial\zeta_i}(\zeta)\, \, (K_j)_y\, V(\zeta)+(-1)^{i-1}\, \text{v. p.} \int_{(\partial W)\cap B_{2\, \lambda}(x)^c}h\,  (K_j)_y\, \widehat{d\zeta_j},$$ for 
	$$\int_{(\partial B_{2\, \lambda}(x))\cap W}h\,  (K_j)_{y}\, \widehat{d\zeta_j}\simeq\, \int_{\Sigma_\lambda}h(x+2\, \lambda\, \theta)\, \frac{(x+2\, \lambda\, \theta-y)_j}{\|x+2\, \lambda\, \theta-y\|^n} \, \lambda^{n-1} \widehat{d\theta_j}\to0$$ if $\lambda\to0$, 
	where $\Sigma_\lambda(x)$ is an open subset of the sphere $S^{n-1}$.
	
	Also the existence of the principal value upstairs is warranteed by Lemma 11 and a standard argument.
\end{proof}

\subparagraph{Acknowledgemnts:}J. B. and J. M. have been partially
		supported by 2021SGR00071 (Generalitat de Catalunya).
		J. M. has been partially
		supported by PID2020-112881GB-I00 and Severo Ochoa and
		Maria de Maeztu program for centers CEX2020-001084-MMTM2016-75390 (Mineco, Spain).

\bibliography{aggreg}
\bibliographystyle{alpha}

\noindent
{\small
	\begin{tabular}{@{}l}
		J.\ M.\ Burgu\'es\\
		J.\ Mateu\\
		Departament de Matem\`atiques\\
		Universitat Aut\`onoma de Barcelona\\
		08193 Bellaterra, Barcelona, Catalonia\\
		{\it E-mail:} {\tt josepmaria.burgues@uab.cat}\\
		{\it E-mail:} {\tt joan.mateu@uab.cat}
\end{tabular}}

\end{document}